\newtheorem*{acknowledgements*}{Acknowledgements}
\newtheorem{hyp}{Hypothesis}
\newtheorem{theorem}{Theorem}[section]
\newtheorem{lemma}[theorem]{Lemma}
\newtheorem{corollary}[theorem]{Corollary}
\newtheorem{proposition}[theorem]{Proposition}
\newtheorem{conjecture}{Conjecture}[section]
\newtheorem*{theorem*}{Theorem}
\theoremstyle{remark}\newtheorem*{remark}{Remark}
\newtheorem{case}{Case}
\newtheorem{subcase}{Subcase}[case]
\numberwithin{equation}{section}
\renewcommand{\phi}{\varphi}
\begin{document}

\title[Selberg's Central limit theorem]{Selberg's Central limit theorem for quadratic dirichlet L-functions over function fields}

\author{Pranendu Darbar}
\address{Indian Statistical Institute   \\
Kolkata  \\
West Bengal 700108,  India}
\email[Pranendu Darbar]{darbarpranendu100@gmail.com}

\author{Allysa Lumley}
\address{Centre de Reserches  Math{\'e}matiques \\
Montr{\'e}al, QC, Canada}
\email[Allysa Lumley]{lumley@crm.umontreal.ca}


\keywords{Finite fields, Function fields, Central limit theorem, Hyperelliptic curves}

\begin{abstract}
In this article, we study the logarithm of the central value $L\left(\frac{1}{2}, \chi_D\right)$ in the symplectic family of Dirichlet $L$-functions associated with the hyperelliptic curve of genus $\delta$ over a fixed finite field $\mathbb{F}_q$ in the limit as $\delta\to \infty$. Unconditionally, we show that the distribution of $\log \big|L\left(\frac{1}{2}, \chi_D\right)\big|$ is asymptotically bounded above by the Gaussian distribution of mean $\frac{1}{2}\log \deg(D)$ and variance $\log \deg(D)$. Assuming a mild condition on the distribution of  the low-lying zeros in this family, we obtain the full Gaussian distribution.
\end{abstract}


\maketitle
\section{Introduction}
The study of the analytic properties of $L$-functions is an integral part of modern number theory due to their myriad of connections to other objects of interest. Of particular interest is trying to understand the distribution of values these $L$-functions take at various points within the critical strip, especially values along the central line $\Re(s)=\frac12$. Perhaps the most significant unconditional result in this vein is Selberg's central limit theorem \cite{SelbergZeta}, which states that, for $T$ sufficiently large as $t$ varies in $[T,2T]$, the real and imaginary parts of $\log \zeta(\tfrac12+it)$ are normally distributed with mean $0$ and variance $\frac12\log\log T$. Following this result, a great deal of effort has gone into generalizing this to other $L$-functions. Indeed, Selberg himself proved that for a fixed $t$, the imaginary part of $\log L(\tfrac12+it, \chi)$ becomes normally distributed as $\chi$ varies among Dirichlet characters of a large prime modulus $q$, \cite{SelberDirichlet}. Under some widely believed conditional assumptions, Bombieri and Hejhal \cite{BombieriHejhal} were able to prove similar results for a fairly general class of $L$-functions, also in the $t$-aspect. This work has been made unconditional in at least one case by Wenzhi Luo \cite{LuoHecke}.\\

\noindent
Recently, Radziwi{\l}{\l} and Soundararajan \cite{RadSoundSCL} provided another proof of Selberg's original result that was much shorter and easier to digest. This proof was adapted by Hsu and Wong \cite{HsuWong} to treat the case of $\log |L(\frac12+it,\chi)|$ as $t$ varies in $[T,2T]$. Within this work, they also make explicit a notion of independence between primitive Dirichlet $L$-functions predicted by Selberg \cite{SelbergConj}. That is, they prove that for $\{\chi_i\}_{i=1}^n$ a sequence of distinct primitive Dirichlet characters and $T$ sufficiently large, as $t$ varies in $[T,2T]$ the vector $(\log|L(\frac12+it,\chi_1)|, \ldots, \log|L(\frac12+it,\chi_n)|)$ becomes an $n$-variate normal distribution with mean vector $0_n$ and covariance matrix $\frac12(\log\log T)I_n$.  \\

\noindent
The above examples have something in common, which Katz and Sarnak first illuminated in their groundbreaking work \cite{KatzandSarnak}. From their work we learned that the  central values of $L(\frac12+it,f)$ of an $L$-function belong in a family with symmetry type governed by the classical compact groups. In the above, the families of $L$-functions described correspond to unitary families, $U(N)$. In \cite{KatzandSarnak} it was proposed that although zeros high up on the critical line follow the statistics of the unitary family that the low-lying zeros of specific families may follow the statistics of the orthogonal or symplectic matrix groups instead. This idea was demonstrated by Katz and Sarnak finding zeta-functions over function fields whose zero statistics exhibited such behaviours. Keating and Snaith \cite{KeatingandSnaith} made some ``Selberg-type'' conjectures for the orthogonal and symplectic families based on calculations from random matrix theory. These conjectures appear to be unreachable at the moment since they involve the real part of the logarithm of $L$-functions at the central point $s=\frac12$. At present, even the best methods in analytic number theory cannot guarantee more than a positive proportion of $L$-functions in any given family is non-zero at a single point. Indeed, even if we can show that the central value is non-negative, the logarithm is still highly sensitive to zeros close to the central point (i.e. the low-lying zeros described in \cite{KatzandSarnak}).  So what can be said about these families? \\

\noindent
For $d>0$ and characters of the form $\chi_{8d}$, Hough \cite{HoughSymplectic}, provides an upper bound which matches the conjectures of \cite{KeatingandSnaith}. In this work, he also \textit{conditionally} proves the conjectures of Keating and Snaith:  Let $\frac{D}2\le d\le D$ and 
\[B(d)=\frac1{\sqrt{\log\log D}}\left(\log|L(\tfrac12,\chi_{8d})|-\frac12\log\log D\right),\]
then in the sense of distributions 
\begin{equation}\label{KSproven}
\frac1{|s(D)|}\sum_{d \in S(D)}\hspace{-0.15cm}\Delta_{B(d)}\to N(0,1) \text{ as } D\to\infty, 
\end{equation}
where $s(D)$ is the set of odd square-free integers between $D/2$ and $D$, $\Delta_x$ is the point mass at $x$ and $N(0,1)$ denotes the standard normal distribution with mean $0$ and variance $1$. He also finds similar results for the family of $L$-functions associated to weight $k$ Hecke eigen-cusp forms which has orthogonal symmetry type. It remains to explain the conditions under which this theorem holds. Recall that the trouble with proving the conjectures of \cite{KeatingandSnaith} stems from the lack of control over the low-lying zeros associated with the family of $L$-functions in question. Thus, any imposed conditions should relate to this. Of course, it is enough to assume the Riemann Hypothesis and the well-known density conjecture, due to Iwaniec, Luo and Sarnak \cite{ILSZDconj}. The specific statement associated to the symplectic family in question is given below: 
\begin{conjecture}[Zero Density Conjecture]
Let $\phi(x)$ be a Schwartz class function on $\mathbb{R}$ with Fourier transform having compact support. Define the density 
\[W(\mathrm{Sp})(x)dx=\left(1-\frac{\sin(2\pi x)}{2\pi x}\right)dx,\]
let $\rho=\frac12+i\gamma$, $\gamma\in\mathbb{C}$, represent the non-trivial zeros of $L(s,\chi_{8d})$ and $\Lambda(s,\chi_{8d})$ represent the associated completed $L$-function.  Then 
\[\lim_{D\to\infty}\frac1{|s(D)|}\sum_{d\in s(D)}\sum_{\Lambda(\rho,\chi_{8d})=0} \phi\left(\frac{\gamma\log D}{2\pi}\right)=\int_{-\infty}^{\infty}\phi(x)W(\textrm{Sp})(x)dx.\]
\end{conjecture}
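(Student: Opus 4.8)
\emph{Proof strategy.} This is recorded as a conjecture because an unconditional proof is known only for test functions of restricted support; what one would actually carry out is that restricted-support version, and the honest plan is to run the standard one-level density computation and then say precisely where it stalls. The first step is the explicit formula for $L(s,\chi_{8d})$, which converts the sum over zeros into a sum over prime powers. Taking $X=D$ as scaling parameter and $\mathrm{supp}\,\hat\phi\subseteq(-\sigma,\sigma)$, for each fundamental discriminant $8d$ one gets, schematically,
\begin{multline*}
\sum_{\Lambda(\rho,\chi_{8d})=0}\phi\!\left(\frac{\gamma\log D}{2\pi}\right)
= \hat\phi(0)\,\frac{\log(8d/\pi)}{\log D} + \frac{A(d)}{\log D} \\
- \frac{2}{\log D}\sum_{n\ge 2}\frac{\Lambda(n)}{\sqrt n}\,\chi_{8d}(n)\,\hat\phi\!\left(\frac{\log n}{\log D}\right),
\end{multline*}
where $A(d)$ is the bounded archimedean contribution of the gamma factor. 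This reduces everything to controlling the average over $d\in s(D)$ of the quadratic character values $\chi_{8d}(n)=\left(\tfrac{8d}{n}\right)$, for $n$ in the bounded range $\log n\le\sigma\log D$ forced by the support of $\hat\phi$.

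Next I would average over $s(D)$ and split $n$ according to whether it is a perfect square. Since $\log(8d/\pi)=\log D+O(1)$ for $d\asymp D$, the first term contributes $\hat\phi(0)=\int_{-\infty}^{\infty}\phi(x)\,dx$ (granting the routine estimate $\frac1{|s(D)|}\sum_{d}\log d=\log D+O(1)$) and $A(d)/\log D=O(1/\log D)$; this produces the constant $1$ in $W(\mathrm{Sp})$. The perfect-square terms $n=m^{2}$ satisfy $\chi_{8d}(m^{2})=1$ whenever $(m,2d)=1$, so, up to a convergent Euler-type correction, their average is $-\frac{2}{\log D}\sum_{p}\frac{\log p}{p}\,\hat\phi\!\left(\frac{2\log p}{\log D}\right)$, which by the prime number theorem and the substitution $u=2\log p/\log D$ equals $-\int_{0}^{\sigma}\hat\phi(u)\,du$; by Plancherel and the evenness of $\hat\phi$ this is exactly $-\int_{-\infty}^{\infty}\phi(x)\,\frac{\sin 2\pi x}{2\pi x}\,dx$ once $\sigma\le 1$, giving the remaining part of $W(\mathrm{Sp})$. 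What is left is to show that the non-square $n$ contribute $o(1)$: remove the squarefree condition on $d$ by Möbius, use quadratic reciprocity to rewrite $\left(\tfrac{8d}{n}\right)$ as a non-principal character in $d$ modulo a divisor of $4n$, and apply Poisson summation in $d$ (equivalently a large-sieve inequality for real characters). The zero frequency of the Poisson dual vanishes precisely because the character is non-principal, and the surviving frequencies supply the cancellation.

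The hard part is the admissible size of $\sigma$: the error terms from the last step --- the Poisson dual sum together with the large-modulus tail removed by Möbius --- can be absorbed only below an explicit threshold. Crude bookkeeping gives $\sigma<1$; a more careful treatment of the dual sum, or the assumption of GRH for the relevant Dirichlet $L$-functions, pushes toward $\sigma<2$; and going beyond that would require cancellation in character sums that is presently inaccessible. So this plan establishes the conjecture only in restricted-support form, and the statement as written --- for \emph{every} admissible $\phi$ --- is genuinely open, which is why it enters here as a hypothesis rather than a theorem. In the function-field setting that is the real subject of the paper the picture is more favourable: the Riemann Hypothesis there is Weil's theorem, so no GRH input is needed, and the analogous one-level density for the hyperelliptic ensemble over a fixed $\mathbb{F}_q$ with $\delta\to\infty$ should follow unconditionally over a genuine range of support by the same scheme, with $\sum_{D}\chi_{D}(f)$ over monic squarefree $D\in\mathbb{F}_q[T]$ playing the role of $\sum_{d\le D}\left(\tfrac{8d}{n}\right)$ and evaluated by Poisson summation on $\mathbb{F}_q[T]$.
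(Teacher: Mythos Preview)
The paper offers no proof of this statement: it is explicitly labeled a \emph{Conjecture} (the Zero Density Conjecture of Iwaniec--Luo--Sarnak) and appears only as background motivating the weaker Hypothesis~1, which is what the paper actually assumes. Your proposal correctly recognizes this and, going beyond what the paper does, sketches the standard explicit-formula/Poisson-summation argument that establishes the restricted-support case; that sketch is accurate, but since the paper contains no proof to compare against, there is nothing further to assess.
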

\noindent
However, these conditions are quite strong and are also pretty far out of reach. It is therefore of interest to find a weaker condition which still allows Hough's theorem to hold.
Indeed, he uses the following hypothesis to prove \eqref{KSproven}.
\begin{hyp}[Hypothesis 1.3, \cite{HoughSymplectic}]\label{Houghhyp} Assume $y=y(D)\to\infty$ with $D$. Then for $d$ such that $8d$ is a fundamental discriminant, we have  
\[\mathbb{P}\left(\frac{D}2\le d \le D : \gamma_{\min}(d)< \frac{2\pi}{y\log D}\right)=o(1) \text{ as } D\to \infty,\]
where \[\gamma_{\min}(d)=\displaystyle{\min_{\substack{L(\rho,\chi_{8d})=0\\ \rho=\frac12+\beta+i\gamma}}}|\gamma|.\]
\end{hyp}

\noindent
This hypothesis comes from some probabilistic reasoning which suggests if $s$ is near $\frac12$ and the conductor of $\chi_{8d}$ is $\asymp D$ then typically $\gamma_{\min}(d)\gg\frac1{\log D}$.  \\ 

In this paper, we prove a Selberg-type theorem over function fields, $\mathbb{F}_q(t)$, where $q$ is a fixed odd prime power, thus, going back to Katz and Sarnak's original idea. Let $D\in \mathbb{F}_q[t]$ be a monic square-free polynomial. We define the primitive quadratic character associated to $D$ as the Kronecker symbol, $\left(\frac{D}{\cdot}\right)$, see section \ref{prelimsec} for more details. Let $\mathcal{H}_{n, q}$ denote the set of monic square-free polynomials of degree $n$ over $\mathbb{F}_q$. 
There are three aspect in function fields: (i) $n$ is fixed and $q$ varies which we call ``large finite field'' aspect; (ii) $q$ fixed and $n$ varies which we call ``large degree'' aspect; (iii) both $n$ and $q$ varies which we call the double limit aspect. Our goal is to explore the ``large degree'' aspect. For the geometrical significance of these aspects see the section $2.1.$

\vspace{2mm}
\noindent
In particular, we prove results for $\log |L(\frac12,\chi_D)|$ in the ``large degree'' aspect, as such we suppress the $q$ in the notation of the set, that is  $\mathcal{H}_{n, q}=\mathcal{H}_{n}$. For information in the remaining ranges of $\sigma$, see work of the second author \cite{Lumley1} and \cite{Lumley2}. In this setting, Weil \cite{WEIL} has proven the Riemann Hypothesis, so one might hope to provide an unconditional result of this flavour. However, in some ways, the situation over function fields is a bit more frustrating. \\

\noindent
We are able to prove the following unconditional results which show that if we are very near the $\frac12$-line then $\log|L(\frac12+\sigma_0,\chi_D)|$ is normally distributed with mean $\frac12\log d(D) $ and variance $\log d(D)$, where $d(D)$ is the degree of the polynomial $D$. 
More precisely, we define 
\begin{align}\label{delta}
2\delta=d(D)-1-\lambda,
\end{align}
where 
\begin{align*}
  \lambda= 
 \left\{
 \begin{array}
 [c]{ll}
  1 & \text{if\, $d(D)$ even} \\
 0 & \text{if\, $d(D)$ odd}.
  \end{array}
 \right.
\end{align*}
Then we have the following theorem:
\begin{theorem}\label{Unconditional CLT near to half line}
Let $\delta$ be defined by \eqref{delta} and $\sigma_o=\sigma_o(\delta)$ be a function of $\delta$, tending to zero as $\delta\to \infty$ in such a way that $\sigma_o \delta\to \infty$ but $\frac{\sigma_o \delta}{\sqrt{\log \delta}}\to 0$. For $D\in \mathcal{H}_n$, we consider
\[
A(D)=\frac{1}{\sqrt{\log n}}\left(\log\left| L\left(\frac{1}{2}+\sigma_o, \chi_D\right)\right|-\frac{1}{2}\log n\right).
\]
Then, as $n\to \infty$
\[
\frac{1}{|\mathcal{H}_n|}\sum_{D\in \mathcal{H}_n}\Delta_{A(D)}\to N(0, 1),
\]
where $\Delta_x$ is the point mass at $x$ and $N(0, 1)$ is the standard normal distribution.
\end{theorem}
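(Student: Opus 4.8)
The plan is to follow the Radziwi{\l}{\l}--Soundararajan strategy \cite{RadSoundSCL}, as adapted to $L$-value distributions by Hsu--Wong \cite{HsuWong} and, for a symplectic family, by Hough \cite{HoughSymplectic}: approximate $\log|L(\tfrac12+\sigma_o,\chi_D)|$ by a short Dirichlet polynomial over prime polynomials, then prove a central limit theorem for that polynomial by the method of moments. The starting point is that, since $q$ is odd and $D$ is squarefree, $L(u,\chi_D)$ is a polynomial in $u=q^{-s}$ which, by Weil's Riemann Hypothesis \cite{WEIL}, factors (up to a bounded trivial-zero factor when $d(D)$ is even) as $\prod_{j=1}^{2\delta}(1-\alpha_{j,D}u)$ with $|\alpha_{j,D}|=\sqrt q$. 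Hence the logarithm of the Euler product, $\log L(u,\chi_D)=\sum_{m\ge1}\tfrac{\psi_D(m)}{m}u^{m}$ with $\psi_D(m)=-\sum_j\alpha_{j,D}^m+O(1)$, converges for $|u|<q^{-1/2}$; since $\sigma_o>0$ the point $u=q^{-1/2-\sigma_o}$ lies strictly inside the disc of convergence and $L(\tfrac12+\sigma_o,\chi_D)>0$, so, writing out the prime-power terms and using $\chi_D(P)^2=\mathbf{1}_{P\nmid D}$,
\[
\log\bigl|L(\tfrac12+\sigma_o,\chi_D)\bigr|=\sum_{P}\frac{\chi_D(P)}{|P|^{1/2+\sigma_o}}\;+\;\frac12\sum_{P}\frac{\mathbf{1}_{P\nmid D}}{|P|^{1+2\sigma_o}}\;+\;O(1),
\]
the $O(1)$ absorbing the absolutely convergent contribution of cubes and higher prime powers. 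I then split the first (only conditionally convergent) sum as $\sum_P\chi_D(P)|P|^{-1/2-\sigma_o}=\mathcal P_\ell(D)+T_\ell(D)$ with $\mathcal P_\ell(D):=\sum_{\deg P\le\ell}\chi_D(P)|P|^{-1/2-\sigma_o}$, choosing $\ell=\ell(\delta)$ so that $\ell\sigma_o\to\infty$ but $\ell=o(\delta)$ --- possible because $\sigma_o\delta\to\infty$ (e.g.\ $\ell=\lceil\sqrt{\delta/\sigma_o}\,\rceil$).

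The mean and variance come from the two harmless terms. By the prime polynomial theorem $\tfrac12\sum_{P}|P|^{-1-2\sigma_o}=\tfrac12\sum_{m\ge1}m^{-1}q^{-2m\sigma_o}+O(1)=\tfrac12\log(1/\sigma_o)+O(1)$, and subtracting $\tfrac12\sum_{P\mid D}|P|^{-1-2\sigma_o}=O(\log\log n)$, the middle term above equals $\tfrac12\log(1/\sigma_o)+O(\log\log n)$; since $1\ll\sigma_o\delta\ll\sqrt{\log\delta}$ and $n=2\delta+1+\lambda$, one has $\log(1/\sigma_o)=\log\delta-\log(\sigma_o\delta)=\log n+O(\log\log n)$, so this term equals $\tfrac12\log n+o(\sqrt{\log n})$, the prescribed mean. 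In the same way $\sum_{\deg P\le\ell}|P|^{-1-2\sigma_o}=\log(1/\sigma_o)+O(1)=\log n+o(\sqrt{\log n})$, which is the variance of $\mathcal P_\ell(D)$ over $\mathcal H_n$ and accounts for the normalization by $\sqrt{\log n}$. For the Gaussian law of $\mathcal P_\ell(D)/\sqrt{\log n}$ I would, for each fixed $k$, expand
\[
\frac1{|\mathcal H_n|}\sum_{D\in\mathcal H_n}\mathcal P_\ell(D)^{k}=\sum_{\deg P_1,\dots,\deg P_k\le\ell}\Bigl(\prod_{i=1}^k|P_i|^{-1/2-\sigma_o}\Bigr)\cdot\frac1{|\mathcal H_n|}\sum_{D\in\mathcal H_n}\chi_D(P_1\cdots P_k),
\]
and use the Poisson-summation evaluation of $\tfrac1{|\mathcal H_n|}\sum_{D\in\mathcal H_n}\chi_D(g)$, which is $\prod_{P\mid g}(1+|P|^{-1})^{-1}+O(q^{-n/2+o(n)})$ for $g$ a perfect square and $O(q^{-n/2+o(n)})$ otherwise. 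Since $\ell=o(n)$, each product $P_1\cdots P_k$ has degree $o(n)$, so the non-square tuples contribute $o(1)$; the square tuples reduce, to leading order, to the $(k-1)!!$ perfect matchings of $\{1,\dots,k\}$ when $k$ is even (each contributing $(\sum_{\deg P\le\ell}|P|^{-1-2\sigma_o})^{k/2}\sim(\log n)^{k/2}$) and to $o((\log n)^{k/2})$ when $k$ is odd, giving convergence of the moments of $\mathcal P_\ell(D)/\sqrt{\log n}$ to those of $N(0,1)$.

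It remains --- and this is the heart of the proof --- to show $T_\ell(D)=o(\sqrt{\log n})$ for all but $o(|\mathcal H_n|)$ of $D\in\mathcal H_n$. The trouble is that $T_\ell(D)=\sum_{m>\ell}\bigl(\sum_{\deg P=m}\chi_D(P)\bigr)q^{-m(1/2+\sigma_o)}$ converges only conditionally --- the series of absolute values, $\sum_m m^{-1}q^{m(1/2-\sigma_o)}$, diverges --- so its mean square over $\mathcal H_n$ cannot be bounded by opening the square and applying orthogonality term by term. Instead I would prove a mollified second-moment estimate
\[
\frac1{|\mathcal H_n|}\sum_{D\in\mathcal H_n}\Bigl|\log\bigl|L(\tfrac12+\sigma_o,\chi_D)\bigr|-\mathcal P_\ell(D)-\tfrac12\log n\Bigr|^{2}=o(\log n):
\]
write $L(\tfrac12+\sigma_o,\chi_D)$ via its approximate functional equation (two sums of ``degree'' $\asymp\delta$, using $L(\tfrac12+\sigma_o,\chi_D)=q^{-2\delta\sigma_o}L(\tfrac12-\sigma_o,\chi_D)$), introduce a Dirichlet-polynomial mollifier $N(D)$ of degree $o(\delta)$ with $N(D)\approx\exp(-\mathcal P_\ell(D)-\tfrac12\log n)$, and evaluate $\tfrac1{|\mathcal H_n|}\sum_D L(\tfrac12+\sigma_o,\chi_D)N(D)$ and $\tfrac1{|\mathcal H_n|}\sum_D|L(\tfrac12+\sigma_o,\chi_D)N(D)|^{2}$ by the same orthogonality input, the Weil bound $|\psi_D(m)|\le2\delta\,q^{m/2}+O(1)$ controlling whatever is not captured by the functional equation. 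Chebyshev's inequality then gives the concentration, and combined with the moment computation --- and the fact that moment convergence to $N(0,1)$ forces convergence in distribution --- one obtains $A(D)\Rightarrow N(0,1)$.

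The main obstacle is precisely this mollified second-moment estimate. We work exactly at $s=\tfrac12+\sigma_o$ with $\sigma_o\to0$, so the evaluation point sits only a distance $\asymp\sigma_o$ from the possible low-lying zeros of $L(s,\chi_D)$, and the Dirichlet series for $\log|L|$ is only conditionally convergent; the hypothesis $\sigma_o\delta\to\infty$ is what rescues the argument --- it forces $\sigma_o\gg 1/d(D)$, so that the point is clear of the low-lying zeros and no analogue of Hough's Hypothesis \ref{Houghhyp} (nor of the Zero Density Conjecture) is needed --- while the complementary assumption $\sigma_o\delta=o(\sqrt{\log\delta})$ ensures that, after all of this, the mean and variance are still exactly $\tfrac12\log n$ and $\log n$.
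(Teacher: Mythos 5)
Your setup of the mean and variance, and the moment computation for the truncated prime-polynomial sum $\mathcal P_\ell(D)$, do line up with what the paper does (Lemmas~\ref{square term evaluation}, \ref{polya-vinogradov inequality}, \ref{moment estimations over irreducibles}): square tuples give the Gaussian moments, nonsquare tuples are killed by the P\'olya--Vinogradov bound, and $\tfrac12\sum_{P}|P|^{-1-2\sigma_o}$ supplies the shift $\tfrac12\log n + o(\sqrt{\log n})$. Where you diverge from the paper, and where your outline has a genuine gap, is in the step you correctly identify as the heart of the matter: showing that $\log|L(\tfrac12+\sigma_o,\chi_D)|-\mathcal P_\ell(D)-\tfrac12\log n$ is typically $o(\sqrt{\log n})$. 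The paper does \emph{not} do this via an approximate functional equation plus mollifier. It instead follows the Radziwi{\l}{\l}--Soundararajan explicit-formula route: one integrates $\frac{L'}{L}(w,\chi_D)$ against the kernel $q^{X(w-s)}(1-q^{X(w-s)})^2/(1-q^{-(w-s)})^3$ on a vertical segment (the identity \eqref{Integral weight}), evaluates the integral both by the Dirichlet series (producing the smoothed weight $\Lambda_X$ of \eqref{Lambda X def}) and by residues (producing $L'/L$ at $s$ plus a sum over the $2\delta$ zeros), and then \emph{bounds the zero sum self-referentially} by $L'/L(\tfrac12+\sigma_0,\chi_D)$ itself via the Altu\u g--Tsimerman inequality (Lemmas~\ref{TsimAltu}, \ref{bnd-sum-zeros}). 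After choosing $\sigma_0=c/X$ with $c$ small, the zero-sum term can be absorbed into the left side, yielding \eqref{LprimeL-identity} and then Proposition~\ref{logL-as-PX}, whose error terms are handled by the short second-moment estimate Lemma~\ref{mean square estimate}. No mollifier, no approximate functional equation, and no evaluation of moments of $L$ itself is needed.

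Your proposed substitute --- computing $\tfrac1{|\mathcal H_n|}\sum_D L(\tfrac12+\sigma_o,\chi_D)N(D)$ and $\tfrac1{|\mathcal H_n|}\sum_D|L N|^2$ for a mollifier of degree $o(\delta)$ --- is not carried out, and even if it were it would not close the gap as written: bounding the first two moments of $LN$ controls the size of $|L\cdot N|$ on average, but passing from that to a \emph{second-moment bound on} $\log|L|-\mathcal P_\ell-\tfrac12\log n$ requires additional work (one-sided bounds from $LN=O(1)$ do not control the set where $|L|$ is very small, which is precisely where $\log|L|$ blows up). Moreover, the second mollified moment of quadratic $L$-functions in this function-field family is itself a substantial theorem (Florea, Bui--Florea), far heavier than the elementary Lemma~\ref{mean square estimate} the paper actually uses. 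You also observe, correctly, that $\sigma_o\delta\to\infty$ keeps the evaluation point clear of low-lying zeros; the paper encodes exactly this through the choice $\sigma_0=c/X$ with $X\le n/(4k)$ and through Lemma~\ref{bnd-sum-zeros}, but your outline does not exploit it quantitatively. So: correct mean/variance and correct moment heuristic, but the decisive reduction of $\log|L|$ to a short Dirichlet polynomial is missing, and the mollifier route you sketch is both unproved and substantially harder than, and logically distinct from, the explicit-formula argument the paper uses.
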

\begin{remark}
From \eqref{delta}, we can notice that $2\delta=n-1-\lambda$, thus as $n\to\infty$ so does $\delta$. One introduces the parameter $\delta$ as it helps to give a uniform statement about the number of non-trivial zeros associated to $L(s,\chi_D)$ regardless of whether $n=2g+1$ or $n=2g+2$. In both cases, $\delta=g$ which denotes the genus of the hyperelliptic curve generated from $C_D: y^2=D(x)$. Further details on this are given in Section \ref{specsec}.  
\end{remark}
\begin{remark}
In Section \ref{logformula}, we observe that the main term of our expression for $\log L(\frac12+\sigma,\chi_D)$ is real and the contribution of the imaginary part is very small, thus we we do not consider the distribution of $\arg L(\frac12+\sigma,\chi_D)$.
 \end{remark}

\noindent
Theorem \ref{Unconditional CLT near to half line}  leads us to the following  corollary, which is an unconditional proof of ``one-half'' of the Keating-Snaith conjectures. That is, we show that $\log |L(\frac12,\chi_D)|$ has at most a Gaussian distribution. 
\begin{corollary}\label{Unconditional CLT}
Let $Z$ be a real number. As $n\to \infty$, we have
\begin{align*}
\mathbb{P}\left(D\in \mathcal{H}_n : \frac{1}{\sqrt{\log n}}\left(\log \left|L\left(\frac{1}{2}, \chi_D\right)\right|-\frac{1}{2}\log n\right)>Z \right)\leq \int_Z^{\infty}e^{-\frac{t^2}{2}}dt+o_Z(1).
\end{align*}
\end{corollary}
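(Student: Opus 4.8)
The plan is to deduce Corollary~\ref{Unconditional CLT} from Theorem~\ref{Unconditional CLT near to half line} by a \emph{deterministic} comparison of $\log|L(\tfrac12,\chi_D)|$ with $\log|L(\tfrac12+\sigma_o,\chi_D)|$ that is valid for every $D\in\mathcal H_n$; the point is that this comparison costs only $o(\sqrt{\log n})$ precisely because of the constraints imposed on $\sigma_o$. Put $u=q^{-s}$. By Weil's Riemann Hypothesis the completed $L$-function attached to $\chi_D$ factors as a product over its $2\delta$ zeros, all of which lie on $\Re(s)=\tfrac12$; writing the corresponding inverse roots as $\sqrt q\,e^{i\theta_j(D)}$ we obtain, for any $\sigma\ge 0$ and up to an additive $O(1)$ coming from the bounded factor relating $L(s,\chi_D)$ to its completion,
\[
\log\Bigl|L\Bigl(\tfrac12+\sigma,\chi_D\Bigr)\Bigr|=\sum_{j=1}^{2\delta}\log\bigl|1-q^{-\sigma}e^{i\theta_j(D)}\bigr|+O(1),
\]
uniformly in $D$. (If $L(\tfrac12,\chi_D)=0$ then some $\theta_j(D)=0$ and the left-hand side at $\sigma=0$ is $-\infty$; such $D$ never lie in the event we estimate, so they do no harm.)

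Second, I would isolate the elementary one-sided estimate that drives everything: for $0<r<1$ and all $\theta\in\mathbb R$,
\[
\log\bigl|1-e^{i\theta}\bigr|-\log\bigl|1-r e^{i\theta}\bigr|=\tfrac12\log\frac{2(1-\cos\theta)}{(1-r)^2+2r(1-\cos\theta)}\le \log\frac{2}{1+r},
\]
which follows because $x\mapsto \dfrac{2x}{(1-r)^2+2rx}$ is increasing on $[0,\infty)$ and $1-\cos\theta\le 2$. Applying this with $r=q^{-\sigma_o}\in(0,1)$, summing over the $2\delta=n-1-\lambda$ zeros, and using $\log\frac{2}{1+q^{-\sigma_o}}\ll_q\sigma_o$ (valid since $\sigma_o\to 0$), we get, \emph{for every} $D\in\mathcal H_n$,
\[
\log\Bigl|L\Bigl(\tfrac12,\chi_D\Bigr)\Bigr|\le \log\Bigl|L\Bigl(\tfrac12+\sigma_o,\chi_D\Bigr)\Bigr|+O_q(\sigma_o\delta)+O(1).
\]
Since $\log\delta\sim\log n$, the hypothesis $\sigma_o\delta/\sqrt{\log\delta}\to 0$ of Theorem~\ref{Unconditional CLT near to half line} makes the error term $o(\sqrt{\log n})$, and it does not depend on $D$. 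This is the step one must get right, and the only real obstacle: the cost of transferring from the line $\Re(s)=\tfrac12$ out to $\Re(s)=\tfrac12+\sigma_o$ has to be $o(\sqrt{\log n})$, which is exactly why Theorem~\ref{Unconditional CLT near to half line} asks $\sigma_o$ to be so small; the rest is routine.

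Finally, with $\varepsilon_n:=\frac1{\sqrt{\log n}}\bigl(O_q(\sigma_o\delta)+O(1)\bigr)\to 0$ (a fixed sequence, not depending on $D$), the last display gives, for all $D\in\mathcal H_n$,
\[
\frac1{\sqrt{\log n}}\Bigl(\log\bigl|L(\tfrac12,\chi_D)\bigr|-\tfrac12\log n\Bigr)\le A(D)+\varepsilon_n,
\]
where $A(D)$ is as in Theorem~\ref{Unconditional CLT near to half line}. Hence
\[
\mathbb P\!\left(D\in\mathcal H_n:\frac1{\sqrt{\log n}}\Bigl(\log\bigl|L(\tfrac12,\chi_D)\bigr|-\tfrac12\log n\Bigr)>Z\right)\le \mathbb P\bigl(D\in\mathcal H_n:A(D)>Z-\varepsilon_n\bigr),
\]
and by Theorem~\ref{Unconditional CLT near to half line} (weak convergence $A(D)\to N(0,1)$) together with the continuity of the standard Gaussian distribution function, the right-hand side is $\le \frac1{\sqrt{2\pi}}\int_Z^{\infty}e^{-t^2/2}\,dt+o_Z(1)=\mathbb P(N(0,1)>Z)+o_Z(1)$, which is the bound asserted in Corollary~\ref{Unconditional CLT}. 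The argument is deliberately one-sided: it yields only an upper bound for the tail --- ``one half'' of the Keating--Snaith prediction --- since bounding $\log|L(\tfrac12,\chi_D)|$ \emph{from below} would require control of the smallest $|\theta_j(D)|$, i.e.\ of the low-lying zeros, which is precisely what forces Theorem~\ref{Unconditional CLT near to half line} to operate at distance $\gg 1/\delta$ from the central point rather than at $s=\tfrac12$ itself.
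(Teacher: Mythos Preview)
Your proof is correct and follows essentially the same approach as the paper's: both establish the deterministic one-sided inequality $\log|L(\tfrac12,\chi_D)|\le \log|L(\tfrac12+\sigma_o,\chi_D)|+O(\delta\sigma_o)+O(1)$ via the factorization over the zeros (granted by Weil's RH), then invoke Theorem~\ref{Unconditional CLT near to half line}. The only cosmetic difference is that the paper obtains the inequality by integrating $L'/L$ from $\tfrac12$ to $\tfrac12+\sigma_o$ and observing that the resulting sum $\frac12\sum_j\log\bigl(1+\frac{q^{\sigma_o}+q^{-\sigma_o}-2}{2-2\cos(2\pi\theta_{j,D})}\bigr)$ is nonnegative, whereas you bound each factor $\log|1-e^{i\theta_j}|-\log|1-q^{-\sigma_o}e^{i\theta_j}|$ directly by the elementary estimate $\log\frac{2}{1+q^{-\sigma_o}}\ll\sigma_o$; these are two ways of writing the same computation, and both yield a main term of size $\asymp\delta\sigma_o\log q$. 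Your final probabilistic step (passing from $\{A(D)>Z-\varepsilon_n\}$ to $\int_Z^\infty e^{-t^2/2}\,dt+o_Z(1)$ via continuity of the Gaussian cdf) is spelled out in more detail than in the paper, which is a plus.
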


\vspace{2mm}
\noindent
Of course, we would like to have a full result about the distribution for $\log |L(\frac12,\chi_D)|$ as $D$ varies over $\mathcal{H}_n$, we can at best provide a conditional result, and this comes from some uncertainty due to a conjecture of Chowla.  Chowla's conjecture presented in \cite{ChowlaConj} asserts that {\it all} $L$-functions associated with quadratic characters do not vanish at the central point $s=\frac12$. The analogue of Chowla's conjecture over function fields has been disproven by Li \cite[Theorem 1.3]{LiCentralVanishing}.  More precisely, she finds infinitely many quadratic characters $\chi_D$ such that $L(\tfrac12,\chi_D)=0$ and provides a lower bound for the number of these counterexamples. Based on the size of these lower bounds, it is still possible that Chowla's conjecture holds for {\it almost all} quadratic characters, that is for $100\%$ of quadratic characters. Indeed, Bui and Florea \cite{BF1} prove that $94.29\%$ of $D\in \mathcal{H}_{2g+1}$ satisfy $L(\frac12,\chi_D)\neq 0$ as $g\to \infty$.  

Now, we require an analogue of Hypothesis \ref{Houghhyp}. The justification for this analogue is given in Section \ref{specsec}.
 
 \begin{hyp}[Low-lying Zero Hypothesis]\label{low lying zero hypothesis}
Let $\theta_{j,D}$ be the eigenphases associated to \eqref{Lstardef}. If $y=y(\delta)\to \infty$ then as $\delta\to \infty$ we obtain
\[
\frac{1}{|\mathcal{H}_{n}|}\left|\left\{D\in \mathcal{H}_{n}: \min_{j}|\theta_{j, D}|< \frac{1}{y\delta} \right\}\right|=o(1),
\]
where $\delta$ is defined by \eqref{delta}.
\end{hyp}

\noindent

From here we obtain:
\begin{theorem}\label{main CLT theorem}
Suppose that the Low-lying Zero Hypothesis holds for $\{L(s, \chi_D)\}_{D\in \mathcal{H}_n}$. For $D\in \mathcal{H}_n$, we consider
\[
\widetilde{A}(D)=\frac{1}{\sqrt{\log n}}\left(\log\left| L\left(\frac{1}{2}, \chi_D\right)\right|-\frac{1}{2}\log n\right).
\]
Then we have
\[
\frac{1}{|\mathcal{H}_n|}\sum_{D\in \mathcal{H}_n}\Delta_{\widetilde{A}(D)}\to N(0, 1), \quad n\to \infty.
\]
\end{theorem}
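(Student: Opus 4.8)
\textbf{Proof proposal for Theorem \ref{main CLT theorem}.}

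The plan is to deduce the full Gaussian distribution for $\log|L(\tfrac12,\chi_D)|$ from the work already carried out for Theorem \ref{Unconditional CLT near to half line} by replacing the safety offset $\sigma_o$ with $0$, at the cost of assuming the Low-lying Zero Hypothesis. The starting point is the explicit formula for $\log L(\tfrac12+\sigma,\chi_D)$ established in Section \ref{logformula}, which expresses it (up to negligible error) as a short Dirichlet polynomial over prime powers $P$ with $\deg P \le X$ for a suitable truncation parameter $X = X(\delta)$, plus a ``zero contribution'' term coming from the nontrivial zeros $\theta_{j,D}$ within distance $O(1/X)$ of the point of evaluation. In the proof of Theorem \ref{Unconditional CLT near to half line}, the displacement $\sigma_o$ (with $\sigma_o\delta\to\infty$) is precisely what guarantees that no zero lies too close to $\tfrac12+\sigma_o$, so this zero term is $o(\sqrt{\log n})$ uniformly; here, instead, the Low-lying Zero Hypothesis plays that role. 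First I would fix $y=y(\delta)\to\infty$ slowly and split $\mathcal{H}_n$ into the ``good'' set $\mathcal{G}_n = \{D : \min_j|\theta_{j,D}| \ge \tfrac1{y\delta}\}$ and its complement; by the hypothesis $|\mathcal{H}_n\setminus\mathcal{G}_n| = o(|\mathcal{H}_n|)$, so that set contributes $o(1)$ to the empirical measure and can be discarded. On $\mathcal{G}_n$, the same zero-detection/explicit-formula estimate used in the unconditional case shows that $\log|L(\tfrac12,\chi_D)|$ differs from the truncated prime-power sum $\Re\sum_{\deg P\le X} \frac{\chi_D(P)\deg P}{|P|^{1/2}(\,\text{something}\,)}$ by an error that is $o(\sqrt{\log n})$, uniformly for $D\in\mathcal{G}_n$.

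With that reduction in hand, the remainder is a moment computation that is essentially identical to the one underlying Theorem \ref{Unconditional CLT near to half line}. I would compute, for each fixed $k$, the $k$-th moment
\[
\frac1{|\mathcal{H}_n|}\sum_{D\in\mathcal{G}_n}\left(\frac{1}{\sqrt{\log n}}\Bigl(\Re\!\!\sum_{\deg P\le X}\!\!\frac{a_P\,\chi_D(P)}{|P|^{1/2}} - \tfrac12\log n\Bigr)\right)^{\!k},
\]
and show it converges to the $k$-th moment of $N(0,1)$. This rests on the orthogonality/averaging relations for $\chi_D$ over square-free $D$ of degree $n$ (the function-field analogue of averaging quadratic characters, already invoked in the earlier proof): the average of $\chi_D(P)$ over $D\in\mathcal{H}_n$ is $1/|P|+O(\cdot)$ when $P$ is a perfect square and is negligible otherwise, so the diagonal terms $P = \square$ produce the mean $\tfrac12\log n$ (via $\sum_{\deg P\le X}\frac{\deg P}{|P|}$, which is $\sim \log n$ for the relevant $X$) and the variance $\log n$, while off-diagonal terms vanish in the limit provided $X$ is taken small enough relative to $n$ — exactly the same constraint on $X$ as in Theorem \ref{Unconditional CLT near to half line}. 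Passing from $\mathcal{G}_n$ back to $\mathcal{H}_n$ costs only $o(1)$ in each moment because the excluded set is a vanishing proportion and (after a routine truncation of $\log|L|$ itself, or by working with a bounded test function and appealing to the method of moments for tight families) contributes negligibly. By the method of moments, the empirical distribution of $\widetilde A(D)$ over $\mathcal{G}_n$, hence over $\mathcal{H}_n$, converges to $N(0,1)$.

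The main obstacle is controlling the zero contribution on the good set $\mathcal{G}_n$: unlike in Theorem \ref{Unconditional CLT near to half line}, where the fixed offset $\sigma_o$ uniformly bounds every zero term regardless of $D$, here one only has the lower bound $\min_j|\theta_{j,D}|\ge \tfrac1{y\delta}$, and one must verify that this suffices to make $\sum_j \log\bigl|1 - e^{i(\theta_{j,D}-\text{arg})}\bigr|$-type sums (summed against the explicit-formula weight, truncated at $\deg P\le X$) be $o(\sqrt{\log n})$ uniformly over $D\in\mathcal{G}_n$. This requires a careful choice of the interrelated parameters $y$ and $X$: $y$ must grow slowly enough that the hypothesis still gives $o(1)$, yet fast enough that $\log(y\delta)/X \cdot (\text{number of low zeros})$ is $o(\sqrt{\log n})$, using the bound $\le 2\delta+O(1)$ on the total number of zeros together with a dyadic decomposition near the central point. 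Once the parameter balancing is done, everything else is a line-by-line repetition of the argument already developed for the unconditional theorem with $\sigma_o$ set to $0$.
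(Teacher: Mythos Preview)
Your overall strategy---split $\mathcal{H}_n$ into a good set $\mathcal{G}_n$ where $\min_j|\theta_{j,D}|\ge 1/(y\delta)$ and its complement, discard the complement via the Low-lying Zero Hypothesis, and show the normalised logarithm is Gaussian on $\mathcal{G}_n$---is correct and matches the paper. The divergence is in how you handle the good set, and there your plan has a real gap.

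You propose to take the explicit formula of Section~\ref{logformula} and ``set $\sigma_o$ to $0$'', approximating $\log|L(\tfrac12,\chi_D)|$ directly by a short prime-power sum and then redoing the moment computation at the central point. But Proposition~\ref{logL-as-PX} is proved only for $\sigma_0=c/X>0$: the derivation passes through Lemma~\ref{bnd-sum-zeros}, whose bound carries a factor $1/\sigma_0^2$, so the error terms blow up as $\sigma_0\to 0$. There is no separate ``zero contribution'' term in that proposition for you to control with the eigenphase lower bound---the zeros have already been absorbed into the $O(\delta/X)$ error precisely because the offset $\sigma_0>0$ keeps every zero away from the evaluation point. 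So ``the same zero-detection/explicit-formula estimate used in the unconditional case'' is not available at $\sigma_o=0$, and the parameter balancing you sketch in your last paragraph does not by itself repair this.

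The paper sidesteps the issue entirely. It does \emph{not} redo any moment calculation at $\sigma=\tfrac12$. Instead it keeps a small $\sigma_o>0$ satisfying the hypotheses of Theorem~\ref{Unconditional CLT near to half line} and separately bounds the discrepancy $\log|L(\tfrac12,\chi_D)|-\log|L(\tfrac12+\sigma_o,\chi_D)|$ on the good set by integrating $L'/L$ along the real segment $[\tfrac12,\tfrac12+\sigma_o]$ and using the lower bound $|\theta_{j,D}|\ge 1/(y\delta)$ to estimate each zero term (Proposition~\ref{propforproofthm1.3}, equation~\eqref{key-for-thm1.3}). Choosing $y$ so that $y\delta\sigma_o=o(\sqrt{\log\delta})$ makes this discrepancy $o(\sqrt{\log\delta})$ on average over $\mathcal{G}_n$, after which Theorem~\ref{Unconditional CLT near to half line} (already proved unconditionally) supplies the Gaussian for $\log|L(\tfrac12+\sigma_o,\chi_D)|$. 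The whole proof is then two lines: invoke Proposition~\ref{propforproofthm1.3} and Theorem~\ref{Unconditional CLT near to half line}. Your route could likely be made to work by deriving a genuinely new explicit formula at $s=\tfrac12$ with an explicit zero sum, but that is substantially more work than the paper's comparison device.
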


The remark following Theorem \ref{Unconditional CLT near to half line} also applies to Theorem \ref{main CLT theorem}. That is, we have a uniform statement about the distribution of $\log| L(\tfrac12,\chi_D)|$ regardless of whether $n=2g+1$ or $n=2g+2$.\\

\noindent
The plan for the paper is as follows: In section \ref{prelimsec}, we will give necessary background for studying the question over function fields. In section \ref{Lemmata}, we compile a list of useful lemmas to complete the proofs. In Section \ref{logformula}, we will develop an expression for $\log L(\tfrac12+\sigma_0,\chi_D)$ which makes it easier to do the necessary moment calculations. In Section \ref{thm1.1proof}, we prove Theorem \ref{Unconditional CLT near to half line}. In Section \ref{main CLT theorem}, we discuss the discrepancy between $\log|L(\tfrac12,\chi_D)|$ and $\log|L(\tfrac12+\sigma_0),\chi_D)|$ and complete the proof of Theorem \ref{main CLT theorem}. In the final section we prove Corollary \ref{Unconditional CLT}. 

\section{Background for $L$-functions over function fields}\label{prelimsec}
We begin by fixing some notation which will be used throughout the paper.  We will use \cite{ROS} as a general reference.

\subsection{Notations and basics:} Let $q=p^e$, for p a fixed odd prime and $e\ge1$ an integer.  Then let $\mathbb{F}_q$ be the finite field with $q$ elements. The polynomial ring $\mathbb{F}_q[t]$ has many things in common with the integers including satisfying a ``prime number theorem'' for its monic irreducible polynomials. For $f$ in $\mathbb{F}_q[t]$ we denote the degree of the polynomial as $d(f)$ or $\deg(f)$.  The norm of a polynomial $f\in \mathbb{F}_{q}[t]$ is, for $f\neq 0$, define to be $|f|=q^{d(f)}$ and for $f=0$, $|f|=0$.

  We define 
 \begin{align*}
&  \mathcal{M}_{n}=\{f\in\mathbb{F}_q[t] : f \text{ is monic and } d(f) =n\},  \hspace{2mm } \mathcal{M}_{_\le n}=\bigcup_{j\le n}   \mathcal{M}_{j}, \\ 
 & \mathcal{P}_{n}=\{f\in\mathbb{F}_q[t] : f \text{ is monic, irreducible and } d(f) =n\},   \mathcal{P}_{\le n} = \bigcup_{j\le n} \mathcal{P}_{j}, 
 \end{align*}
 and
 \begin{equation*}
 \mathcal{H}_{n}=\{f\in\mathbb{F}_q[t] : f \text{ is monic, square-free and } d(f) =n\}.
 \end{equation*}

\noindent
Observe that $|\mathcal{M}_{n}|=q^{n}$ and for $n\geq 1$, $|\mathcal{H}_{n}|=q^{n-1} (q-1)$. \\

\noindent Let $\Lambda(f)$ be the analogue of the Von Mangoldt function: 
\[\Lambda(f)=\begin{cases}
\deg P & \text{if } f=P^k, P\in\mathcal{P},\\
0 & \text{otherwise }.
\end{cases}
\]
The prime polynomial theorem (see \cite{ROS}, Theorem $2.2$) states that 
\begin{align}\label{prime poly th}
|\mathcal{P}_{n}|=\frac{q^{n}}{n} \,\,+\,\,O\Big(\frac{q^{\frac{n}{2}}}{n}\Big).
\end{align}

\vspace{2mm}
\noindent
 The zeta function of $A=\mathbb{F}_{q}[t]$, denoted by $\zeta_{A}(s)$, is defined by $$\zeta_{A}(s)=\sum_{f\in \mathcal{M}}\frac{1}{|f|^{s}}=\prod_{P\in \mathcal{P}} \left( 1- |P|^{-s} \right)^{-1},$$ for $\Re(s)>1$. One can show that $\zeta_{A}(s)=\frac{1}{1-q^{1-s}}$, and this provides an analytic continuation of zeta function to the complex plane with a simple pole at $s=1$. Using the change of variable $u=q^{-s}$, 
 \begin{align*}
 \zeta_{A}(s)=\sum_{f\in \mathcal{M}}u^{d(f)}=\frac{1}{1-qu}, \quad \text{ if } |u|<\frac{1}{q}.
 \end{align*} 
 \noindent
\subsection{Quadratic Dirichlet characters and their $L$-functions:}
Let $P$ be a monic irreducible polynomial, we define the quadratic character $\left(\frac{f}{P} \right)$ by 
 \begin{align*}
  \left( \frac{f}{P}\right)= 
 \left\{
 \begin{array}
 [c]{ll}
  1 & \text{if\, $f$ is a square\;} (\text{mod}\,\,  P)\,\, , P\nmid f \\
 -1 & \text{if\, $f$ is not a square\;} (\text{mod}\,\, P)\,\, , P\nmid f\\
  0 & \text{if \; } P\mid f.
  \end{array}
 \right.
 \end{align*}
We extend this definition to any $D\in \mathbb{F}_{q}[t]$ multiplicatively and define the quadratic character $\chi_{D}(f)$ as $\left(\frac{D}{f} \right).$ Given any character one can define an $L$-function associated to it:  
 \begin{align*}
 L(s,\chi_{D})=\sum_{f\in \mathcal{M}} \frac{\chi_{D}(f)}{|f|^{s}}=\prod_{P\in \mathcal{P}}\left(1-\chi_{D}(P)\,|P|^{-s} \right)^{-1},\;\; \Re(s)>1.
 \end{align*} 
Using the change of variable $u=q^{-s}$, we have  
 	\begin{align}\label{L-function series form}
 	\mathcal{L}(u,\chi_{D})=\sum_{f\in \mathcal{M} } \chi_{D}(f)\, u^{d(f)}=\prod_{P\in\mathcal{P}}\left(1-\chi_{D}(P)\,u^{d(P)} \right)^{-1},\;\;\, |u|<\frac{1}{q} .
 	\end{align}
 	\noindent
Observe that if $n\geq d(D)$, then $$\sum_{f\in \mathcal{M}_{n}}\chi_{D}(f)=0 .$$ Thus we have $\mathcal{L}(u,\chi_{D})$ is a polynomial of degree at most $d(D)-1$. From here on, we consider $D$ as a monic, square-free polynomial. Then $\mathcal{L}(u, \chi_D)$ has a trivial zero at $u=1$ if and only if $d(D)$ is even. Thus
\begin{align}\label{L-function polynomial form}
L(s,\chi_{D})=\mathcal{L}(u, \chi_D)=(1-u)^{\lambda}\mathcal{L}^{*}(u, \chi_D)=(1-q^{-s})^{\lambda}{L}^{*}(s, \chi_D),
\end{align}
where 
\begin{align}\label{definition of lambda}
  \lambda= 
 \left\{
 \begin{array}
 [c]{ll}
  1 & \text{if\, $d(D)$ even} \\
 0 & \text{if\, $d(D)$ odd},
  \end{array}
 \right.
\end{align}
and $\mathcal{L}^*(u, \chi_D)$ is a polynomial of degree 
\begin{align*} 
2\delta=d(D)-1-\lambda,
\end{align*}
satisfying the functional equation 
\[
\mathcal{L}^*(u, \chi_D)=(qu^2)^{\delta}\mathcal{L}^*\left(\frac{1}{qu}, \chi_D\right).
\]
Because $\mathcal{L}$ and $\mathcal{L}^*$ are polynomial in $u$, it is convenient to define 
\[
L^*(s, \chi_D)=\mathcal{L}^*(u, \chi_D)
\]
so that the above functional equation can be rewritten as
\[
L^*(s, \chi_D)=q^{(1-2s)\delta}L^*(1-s, \chi_D).
\]
\noindent
Additionally, since $\mathcal{L}^*$ is a polynomial, it can be written as a product of its zeros: 
 \begin{equation} \label{Lstardef}
\mathcal{L}^{*}(u, \chi_{D})=\prod_{j=1}^{2\delta}\left(1-u \sqrt{q}\, \alpha_{j}\, \right),
\end{equation}
 where  $\alpha_j=e(-\theta_{j,D})$ are the reciprocals of $u_j=q^{-\frac12}e(\theta_{j,D})$, the roots of $\mathcal{L}^{*}$. We call $\theta_{j,D}$ the eigenphases of the polynomial and are described in more detail in Section \ref{specsec}.
The Riemann Hypothesis, proved by Weil \cite{WEIL} is that the zeros of $\mathcal{L}^*(u, \chi_D)$ all lie on the circle $|u|=q^{-\frac{1}{2}}$.

\vspace{2mm}
\noindent
We define the completed $L$-function in the following way. Set $X_D (s)=|D|^{\frac{1}{2}-s}X(s)$, where
\begin{align*}
X(s)=\left\{
 \begin{array}
 [c]{ll}
  q^{s-\frac{1}{2}} & \text{if\, $d(D)$ odd} \\
 \frac{1-q^{-s}}{1-q^{-(1-s)}}q^{-1+2s} & \text{if\, $d(D)$ even}.
  \end{array}
 \right.
\end{align*}
Let us consider
\begin{align*}
\Lambda(s, \chi_D)=L(s, \chi_D)X_D(s)^{-\frac{1}{2}}.
\end{align*}

\vspace{2mm}
\noindent
Then the above completed $L$-function satisfies the symmetric functional equation 
\begin{align*}
\Lambda(s, \chi_D)=\Lambda(1-s, \chi_D).
\end{align*}

\vspace{2mm}
\noindent
\subsection{The logarithmic derivative:} 

\noindent
By taking logarithmic derivative of  \eqref{L-function series form} and \eqref{L-function polynomial form}  we obtain
	\[
	\frac{\mathcal{L}'}{\mathcal{L}}(u, \chi_D)=\sum_{n=1}^{\infty}\left(\sum_{f\in \mathcal{M}_{n}}\Lambda(f)\chi_D(f)\right)u^{n-1}
	\]
	and 
	\[
	\frac{\mathcal{L}'}{\mathcal{L}}(u, \chi_D)=\lambda (1-u)^{-1}-\sum_{n=1}^{\infty}\left(\sum_{n=1}^{\infty}\alpha_j^n\right)u^{n-1},
	\]
	where $\lambda$ is defined by \eqref{definition of lambda}.
	Equating these two expressions, we obtain
	\[
	\sum_{f\in \mathcal{M}_{n}}\Lambda(f)\chi_D(f)=-\lambda-q^{\frac{n}{2}}\sum_{n=1}^{2\delta}e(-n\theta_j).
	\]
	
	\vspace{2mm}
	\noindent
We may also express the logarithmic derivative in two ways, one in terms of the zeros of $L(s,\chi_D)$ as
\begin{align}\label{log derivative zeros}
 \frac{L'}{L}(s, \chi_D)&=\log q \bigg(\frac{\lambda q^{-s}}{1-q^{-s}}+\sum_{j=1}^{2\delta}\frac{\alpha_j q^{\frac12-s}}{1-\alpha_j q^{\frac12-s}}\bigg)
\end{align}
and the other in terms of the Dirichlet series as
 \begin{align}\label{log derivative Dirichlet}
 \frac{L'}{L}(s, \chi_D)=-\log q \sum_{f\in \mathcal{M}_n}\frac{\Lambda(f)\chi_D(f)}{|f|^s} .
 \end{align}

\subsection{Spectral Interpretation}\label{specsec}
Let $C$ be a non-singular projective curve over $\mathbb{F}_q$ of genus $\delta$. For each extension field of
degree $n$ of $\mathbb{F}_q$, denote by $N_n(C)$ the number of points of $C$ in $\mathbb{F}_{q^n}$. Then, the zeta
function associated to $C$ defined as
\[
Z_C(u)=\exp\left(\sum_{n=1}^\infty N_n(C)\frac{u^n}{n}\right), \quad |u|<\frac{1}{q},
\]
is known to be a rational function of $u$ of the form
\[
Z_C(u)=\frac{P_C(u)}{(1-u)(1-qu)}.
\]
Additionally, we know, $P_C(u)$ is a polynomial of degree $2\delta$ with integer coefficients, satisfying
a functional equation
\[
P_C(u)=(qu^2)^\delta
P_C\left(\frac{1}{qu}\right),
\]
where $\delta$ is defined as in \eqref{delta}.

\vspace{2mm}
\noindent
The Riemann Hypothesis, proved by Weil \cite{WEIL}, says that the zeros of $P_C(u)$ all lie on the circle $|u| =\frac{1}{\sqrt{q}}$. Thus one may give a spectral interpretation of
$P_C(u)$ as the characteristic polynomial of a $2\delta \times 2\delta$ unitary matrix $\Theta_C$:
\[
P_C(u)=\det \left(I- u\sqrt{q}\Theta_C\right).
\]
Thus the eigenvalues $e^{i\theta_j}$ of $\Theta_C$ correspond to the zeros, $q^{-1/2}e^{-i\theta_j}$, of $Z_C(u)$.
The matrix $\Theta_C$ is called the unitarized  Frobenius class of $C$.\\

\noindent
To put this in the context of our case, note that, for a family of hyperelliptic curves $C_D:\, y^2=D(x)$ of genus $\delta$, the numerator of the zeta function $Z_C(u)$  associated to $C_D$ is coincide with the $L$-function $\mathcal{L}^*(u, \chi_D)$, i.e., $P_C(u)=\mathcal{L}^*(u, \chi_D)$.\\

\noindent
It is an interesting problem to study how the Frobenius classes $\Theta_C$ change as we vary the associated curve over a family of hyperelliptic curves with genus $\delta$. As mentioned in the introduction, there are three aspects where we can study the distribution of of these Frobenius classes. 

\subsubsection*{(i) Large finite field aspect}
Katz and Sarnak \cite{KatzandSarnak} showed that that as $q\to \infty$, the Frobenius classes $\Theta_D$
become equidistributed in the unitary symplectic group $USp(2\delta)$. More precisely,
\begin{equation}\label{KSqlim}
\lim_{q\to \infty}\frac{1}{|\mathcal{H}_n|}\sum_{D\in \mathcal{H}_n}F(\Theta_D)=\int_{USp(2\delta)}F(U)dU,
\end{equation}
for any continuous function $F$ on the space of conjugacy classes of $USp(2\delta)$.
This implies that in the large finite field aspect various statistics of the eigenvalues can be
computed by integrating the corresponding quantities over $USp(2\delta).$
\subsubsection*{(iii) Double limit aspect} See \cite[pg. 11]{KatzandSarnak} where they let $\delta\to\infty$ in \eqref{KSqlim}.
\subsubsection*{(ii) Large degree aspect}
 Since the matrices $\Theta_D$ now inhabit
different spaces as $\delta$ grows, it is not clear how to formulate an equidistribution problem. 
 The following analysis of Katz and Sarnak \cite{KatzSarnak2} illuminates one possible interpretation. We start with an even test function $f$, say, in the Schwartz space $\mathcal{S}(\mathbb{R})$, and for any $N \geq  1$ set
\[
F(\theta):= \sum_{k\in \mathbb{Z}}f\left(N\left(\frac{\theta}{2\pi }-k\right)\right).
\] 
$F(\theta)$ has period $2\pi$ and is localized in an interval of size $\thickapprox 1/N \in \mathbb{R}/2\pi \mathbb{Z}$.
Next, for a unitary $2\delta \times 2\delta$ matrix $U$ with eigenvalues $e^{i\theta_j}, j = 1, \ldots,2\delta$, define
\[
Z_f(U):= \sum_{j=1}^{2\delta}F(\theta_{j, D}).
\]

\vspace{2mm}
\noindent
Now,  $Z_f(U)$ counts the number of ``low-lying'' eigenphases, $\theta_{j,D}$, in the smooth interval of length $\thickapprox \frac1N$ around the origin defined by $f$. In other words, for $j \geq 1$, the above discussion describes the distribution of the numbers
\[
\frac{\theta_{j, D}\, \delta}{2\pi}
\]
as $D$ varies over $\mathcal{H}_n$, $\, n\to \infty$. That is, we use this to study the distribution of the $j$-th lowest zero.
\begin{conjecture}[Density Conjecture]\label{conjecture of katz and sarnak}
For a fixed $q$, we have
\[
\lim_{n\to \infty}\frac{1}{|\mathcal{H}_n|}\sum_{D\in \mathcal{H}_n}Z_f(\Theta_D)=\int_{USp(2\delta)}Z_f(U)dU
\]
for any test function defined as above.
\end{conjecture}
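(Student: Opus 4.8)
The plan is to reduce the conjecture to a uniform asymptotic evaluation of the trace moments $\tfrac1{|\mathcal H_n|}\sum_{D\in\mathcal H_n}\operatorname{tr}(\Theta_D^{m})$ over a window of $m$ growing linearly in $\delta$, and then to read these off from the family of $L$-functions. By Poisson summation $F(\theta)=\tfrac1N\sum_{m\in\mathbb Z}\widehat f(m/N)e^{im\theta}$, where $N\asymp\delta$ is the local scale built into $F$ (so that $\widehat f$ supported in $(-\nu,\nu)$ forces $|m|<\nu N$), whence
\[
Z_f(\Theta_D)=\frac1N\sum_{m\in\mathbb Z}\widehat f\!\left(\frac mN\right)\operatorname{tr}(\Theta_D^{m}),\qquad\operatorname{tr}(\Theta_D^{0})=2\delta .
\]
The term $m=0$ contributes $\tfrac{2\delta}N\widehat f(0)$ on both sides of the conjectured identity and cancels. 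For $m\neq0$, the explicit formula obtained by comparing \eqref{log derivative zeros} with \eqref{log derivative Dirichlet} --- equivalently the displayed identity $\sum_{f\in\mathcal M_{m}}\Lambda(f)\chi_D(f)=-\lambda-q^{m/2}\operatorname{tr}(\Theta_D^{m})$ --- replaces each trace by a short Dirichlet polynomial in $\chi_D$.

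Averaging over $D\in\mathcal H_n$ and interchanging summations, one splits each $f\in\mathcal M_m$ into its perfect-square part and its non-square part. The squares give $\tfrac1{|\mathcal H_n|}\sum_D\sum_{\ell^{2}\in\mathcal M_m}\Lambda(\ell^{2})\chi_D(\ell^{2})$; using $\chi_D(\ell^{2})=\mathbf 1_{\gcd(D,\ell)=1}$, the prime-polynomial identity $\sum_{\ell\in\mathcal M_{m/2}}\Lambda(\ell)=q^{m/2}$, and the fact that the coprimality density equals $1+O(q^{-\deg\ell})$, this equals $q^{m/2}+O(q^{m/4})$ for $m$ even and $0$ for $m$ odd. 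Expanding $\int_{USp(2\delta)}\operatorname{tr}(U^{m})\,dU$ by the Weyl integration formula gives $-1$ for $m$ even (up to a bounded correction near $m=2\delta$) and $0$ for $m$ odd. Thus the arithmetic square term, after multiplication by $-q^{-m/2}$, matches $\int_{USp(2\delta)}\operatorname{tr}(U^m)\,dU$ up to an error that is $O(1)$ for each $m$; since $\tfrac1N\sum_{|m|<\nu N}q^{-m/2}|\widehat f(m/N)|\cdot O(1)=o(1)$, and the $\lambda$ term is handled the same way, the conjecture is \emph{equivalent} to
\[
\frac1N\sum_{0<|m|<\nu N}\frac{\widehat f(m/N)}{q^{m/2}}\,\mathcal E_m\ \longrightarrow\ 0,\qquad
\mathcal E_m:=\frac1{|\mathcal H_n|}\sum_{D\in\mathcal H_n}\ \sum_{\substack{f\in\mathcal M_{m}\\ f\ \text{not a square}}}\Lambda(f)\chi_D(f).
\]
A pleasant feature here is that the right-hand side of the conjecture keeps the matrix size $2\delta$ finite rather than passing to the scaling limit, so the matching of main terms is termwise in $m$ and the whole content of the statement sits in the size of $\mathcal E_m$.

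To bound $\mathcal E_m$ I would swap the order of summation and study the family character sum $\sum_{D\in\mathcal H_n}\big(\tfrac fD\big)$ for a fixed non-square $f$ of degree $m$. Sieving out the square-free condition by M\"obius reduces this to sums $\sum_{D\in\mathcal M_r}\big(\tfrac Df\big)$ with $r\le n$, and the function-field Poisson summation formula (in the form of the lemmas of Section~\ref{Lemmata}) rewrites each of these as a \emph{dual} sum over monic polynomials of degree $\le\deg f-1-r$, weighted by normalized Gauss-type sums attached to $f$. When $m<n$ --- that is, $\widehat f$ supported in $(-2,2)$ --- the dual range is empty or of bounded length, the weights enjoy square-root cancellation, and one gets $\mathcal E_m=O_\varepsilon\!\big(q^{\,m(1+\varepsilon)/2-n/2}\big)$; against the factor $q^{-m/2}$, summed over $|m|<\nu N$, this is $o(1)$, which recovers unconditionally the case of support in $(-2,2)$. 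For $\nu\ge2$ the dual sum has length $\asymp m-n$ and no longer decays on its own --- this is the crux.

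The hard part will be precisely the regime $m\ge n$ (support beyond $2$). The route I would take there is to package the averaged quantity into a double Dirichlet series --- essentially $\sum_{D\ \text{square-free}}L(s,\chi_D)\,|D|^{-w}$ together with its logarithmic-derivative variant --- and to exploit the meromorphic continuation of this series to all of $\mathbb C^{2}$ and its group of functional equations (in the spirit of the classical continuations of such series going back to Hoffstein and Rosen, and of more recent geometric and analytic refinements). Shifting contours then yields an asymptotic for $\tfrac1{|\mathcal H_n|}\sum_D\operatorname{tr}(\Theta_D^{m})$ with a power-saving error $O(q^{-cn})$ that is \emph{uniform} in $m$ throughout $|m|<\nu\delta$, which more than suffices to kill the dual contribution after weighting by $q^{-m/2}\widehat f(m/N)$ and summing over $m$. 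Establishing this uniform power saving --- controlling the polar divisor of the double Dirichlet series and bounding its residues uniformly as the degree parameter grows --- is where essentially all of the difficulty of the conjecture is concentrated, since crude Riemann Hypothesis bounds on the individual dual Gauss sums are insufficient once $m\ge n$.
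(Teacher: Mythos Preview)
The statement you are attempting to prove is labeled in the paper as a \emph{Conjecture}, not a theorem; the paper offers no proof of it and explicitly records it as open, noting only that Rudnick established the case where $\widehat f$ is supported in $(-2,2)$ and that Bui--Florea obtained a related partial result. So there is no ``paper's own proof'' to compare your proposal against.

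Your outline is a reasonable reconstruction of the standard approach to one-level density in this family: Poisson-summing $F$, invoking the explicit formula to replace $\operatorname{tr}(\Theta_D^m)$ by $\sum_{f\in\mathcal M_m}\Lambda(f)\chi_D(f)$, separating the square and non-square contributions, and matching the square piece with the symplectic answer. This indeed recovers the case of support in $(-2,2)$ for essentially the reasons you describe. However, your final paragraph does not constitute a proof of the general case: you assert that a double Dirichlet series argument would yield a uniform power saving for $\tfrac{1}{|\mathcal H_n|}\sum_D\operatorname{tr}(\Theta_D^m)$ throughout $|m|<\nu\delta$ for arbitrary $\nu$, but this is exactly the content of the conjecture and is not known. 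The analytic continuation and functional equations of the relevant double Dirichlet series are available, but extracting from them the needed cancellation once $m\ge n$ is precisely the open problem---the polar structure does not by itself give the required bound, and no argument in the literature carries out the contour shift you describe with the claimed uniformity. In short, your proposal correctly locates the obstruction but does not surmount it; what you have is a proof sketch for the known range together with an honest statement of where the difficulty lies, which is consistent with the paper's treatment of the statement as a conjecture.
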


\noindent
It is also known that 
\[
\lim_{\delta\to \infty}\int_{USp(2\delta)}Z_f(U)dU\sim \int_{-\infty}^{\infty}f(x)\left(1-\frac{\sin 2\pi x}{2\pi x}\right)dx.
\]

\vspace{2mm}
\noindent
Rudnick \cite{Rudnick} proved the Conjecture \ref{conjecture of katz and sarnak} for  $f \in  \mathcal{S}(\mathbb{R})$ is even, with Fourier transform $\hat{f}$ supported in $(-2, 2)$ before Bui and Florea \cite{BF1}  proved the same with the support $(1/4, 1/2)$.\\

\noindent
As an application, the Density Conjecture \ref{conjecture of katz and sarnak} gives the distribution of zeros for a family $L(s, \chi_D)$ near $s =\frac{1}{2}$. As discussed above, this has immediate applications to counting how often $L(\tfrac12,\chi_D)=0$. More precisely, by varying the test function $f$ in the Density Conjecture for any of the above family of hyperelliptic curves $C_D$, one is led to (assuming
the Density Conjecture) (See equation $(56)$ of \cite{KatzSarnak2}):
\begin{align*} 
\lim_{n\to \infty}\frac{1}{|\mathcal{H}_n|}\# \{D\in \mathcal{H}_n : L(1/2, \chi_D)\neq 0\}=1.
\end{align*}

\noindent
Thus, it is clear that, for the above family of hyperelliptic curves, Hypothesis \ref{low lying zero hypothesis} is a consequence of Conjecture \ref{conjecture of katz and sarnak}.

\section{Preliminary lemmas}\label{Lemmata}
This section is simply a collection of lemmas necessary for the final result. 
	\begin{lemma}\label{upper bound for moment}
	Let $k, y$ be integers such that $2ky\leq n$. For any sequence of complex number $\{a(P)\}_{P\in\mathcal{P}}$, we have
	\[
	\sum_{D\in \mathcal{H}_{n}}\bigg|\sum_{d(P)\leq y}\frac{\chi_D(P)a(P)}{|P|^{\frac{1}{2}}}\bigg|^{2k}\ll q^{n}\frac{(2k)!}{k! 2^k}\left(\sum_{d(P)\leq y}\frac{|a(P)|^2}{|P|}\right)^k.
	\]
	\end{lemma}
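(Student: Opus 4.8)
The plan is to expand the $2k$-th power and reduce everything to a character-sum average over $\mathcal{H}_n$, then to the combinatorics of the resulting monomials in the primes. First I would write
\[
\sum_{D\in\mathcal{H}_n}\Bigl|\sum_{d(P)\le y}\frac{\chi_D(P)a(P)}{|P|^{1/2}}\Bigr|^{2k}
=\sum_{\substack{P_1,\dots,P_k\\ Q_1,\dots,Q_k\\ d(P_i),d(Q_j)\le y}}\frac{a(P_1)\cdots a(P_k)\overline{a(Q_1)\cdots a(Q_k)}}{|P_1\cdots P_k Q_1\cdots Q_k|^{1/2}}\sum_{D\in\mathcal{H}_n}\chi_D(P_1\cdots P_k Q_1\cdots Q_k),
\]
using that $\chi_D$ is real-valued so $\overline{\chi_D(Q_j)}=\chi_D(Q_j)$ and multiplicativity gives $\chi_D$ evaluated at the product $N:=P_1\cdots P_k Q_1\cdots Q_k$. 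The key input is the standard evaluation of $\sum_{D\in\mathcal{H}_n}\chi_D(N)$ (a character-sum / quadratic-reciprocity computation, cf.\ the setup in Section \ref{prelimsec} and references therein): when $N$ is a perfect square coprime to the relevant modulus one gets the main term $\asymp q^n\prod_{P\mid N}(1+|P|^{-1})^{-1}\le q^n$, and when $N$ is a non-square the sum is $O(q^{n/2}\deg N)$ or similar, which is negligible because $\deg N\le 2ky\le n$ forces $q^{n/2}\deg N\ll q^n$ against the trivial bound on the coefficients. This is where the hypothesis $2ky\le n$ is used.

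So the dominant contribution comes from tuples $(P_1,\dots,P_k,Q_1,\dots,Q_k)$ for which $N=P_1\cdots P_kQ_1\cdots Q_k$ is a perfect square. Since all $P_i,Q_j$ are irreducible, $N$ is a square precisely when the multiset $\{P_1,\dots,P_k,Q_1,\dots,Q_k\}$ has every element with even multiplicity; in particular this can only happen when the $P$'s pair up with the $Q$'s (after accounting for possible coincidences among the $P_i$ or among the $Q_j$). Bounding $\sum_{D\in\mathcal{H}_n}\chi_D(N)\le |\mathcal{H}_n|\le q^n$ in this case and dropping the factor $\prod(1+|P|^{-1})^{-1}\le1$, I would be left with
\[
\ll q^n\sum_{\substack{P_1,\dots,P_k,Q_1,\dots,Q_k\\ \text{product is a square}}}\frac{|a(P_1)|\cdots|a(P_k)||a(Q_1)|\cdots|a(Q_k)|}{|P_1\cdots P_k Q_1\cdots Q_k|^{1/2}}.
\]
Now I would bound this combinatorial sum by $\frac{(2k)!}{k!2^k}\bigl(\sum_{d(P)\le y}|a(P)|^2/|P|\bigr)^k$: the factor $(2k)!/(k!2^k)=(2k-1)!!$ counts the number of perfect matchings of $2k$ labelled slots into $k$ unordered pairs, and each matched pair $\{P_i,Q_{\sigma(i)}\}$ with $P_i=Q_{\sigma(i)}=P$ contributes $|a(P)|^2/|P|$, summed over $d(P)\le y$, giving the $k$-th power; the diagonal terms where several $P_i$'s already coincide among themselves only help (one checks they are dominated by the same expression, e.g.\ by Cauchy–Schwarz or by noting $\sum_P |a(P)|^{2m}/|P|^m\le (\sum_P|a(P)|^2/|P|)^m$).

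The main obstacle I anticipate is making the error-term bookkeeping clean: one must check that the total contribution of all non-square products $N$ — there are at most $(|\mathcal{P}_{\le y}|)^{2k}$ of them but with the $1/|N|^{1/2}$ weighting — is genuinely absorbed, i.e.\ that $q^{n/2}$ times the relevant sum of $|a|$'s is $\ll q^n \frac{(2k)!}{k!2^k}(\sum|a(P)|^2/|P|)^k$, uniformly in $k$; this is where one wants to be a little careful, using $2ky\le n$ together with a trivial bound like $\sum_{d(P)\le y}|a(P)|/|P|^{1/2}\ll \sqrt{|\mathcal{P}_{\le y}|}\,(\sum|a(P)|^2/|P|)^{1/2}\cdot q^{y/2}$ (Cauchy–Schwarz) and checking the powers of $q$ work out. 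A secondary nuisance is handling coincidences among the $P_i$ (and among the $Q_j$) correctly so that the matching count $(2k-1)!!$ is really an upper bound rather than needing an exact identity; treating $|a|$ as nonnegative and over-counting matchings resolves this.
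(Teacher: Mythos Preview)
The paper itself does not prove this lemma; it cites Florea \cite[Lemma~8.4]{Floriafourthmoment} for $n=2g+1$ and remarks that $n=2g+2$ is a small adaptation. Your outline --- expand the $2k$-th power, split into square and non-square products $N=P_1\cdots P_{2k}$, and count perfect matchings for the square case --- is the standard strategy and matches Florea's. Your treatment of the square terms and the combinatorial factor $\tfrac{(2k)!}{k!2^k}$ is correct.

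The gap is in the non-square terms, which you rightly flagged as the main obstacle but did not actually resolve. The P\'olya--Vinogradov bound over $\mathcal H_n$ (Lemma~\ref{polya-vinogradov inequality}) is $\ll_\epsilon q^{n/2}|N_1|^\epsilon$, not $O(q^{n/2}\deg N)$; and your stated Cauchy--Schwarz carries a spurious $q^{y/2}$. Writing $S=\sum_{d(P)\le y}|a(P)|^2/|P|$, even the correct Cauchy--Schwarz
\[
\sum_{d(P)\le y}\frac{|a(P)|}{|P|^{1/2-\epsilon}}\le\Big(\sum_{d(P)\le y}|P|^{2\epsilon}\Big)^{1/2}S^{1/2}\ll q^{y(1+2\epsilon)/2}S^{1/2}
\]
gives a total non-square contribution $\ll q^{n/2}\,q^{ky(1+2\epsilon)}S^k\le q^{n(1+\epsilon)}S^k$ when $2ky=n$. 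This is \emph{not} $\ll q^n\,\tfrac{(2k)!}{k!2^k}S^k$ uniformly in $n$; no choice of $\epsilon>0$ absorbs the factor $q^{n\epsilon}$, so the argument does not close at the borderline $2ky=n$.

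The missing idea, and the device that makes Florea's argument work, is to use nonnegativity of $|\cdot|^{2k}$ to enlarge the outer sum from $\mathcal H_n$ to all of $\mathcal M_n$ \emph{before} expanding. Over $\mathcal M_n$ the non-square terms vanish exactly: if $N$ is not a perfect square then $D\mapsto\chi_D(N)$ is a non-principal Dirichlet character modulo $\operatorname{rad}(N)$, and since $\deg\operatorname{rad}(N)\le 2ky\le n$ the associated $L$-function is a polynomial in $u$ of degree at most $\deg\operatorname{rad}(N)-1<n$, whence $\sum_{D\in\mathcal M_n}\chi_D(N)=0$. This is precisely where the hypothesis $2ky\le n$ enters. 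Only the square terms survive, with $\sum_{D\in\mathcal M_n}\chi_D(\square)\le q^n$, and then your matching combinatorics finish the proof.
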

	\begin{proof}
The case $n=2g+1$  is proved by Florea, \cite[Lemma 8.4]{Floriafourthmoment}. To get the result for $n=2g+2$ it is a small adaptation of Florea's proof. 
	\end{proof}

	\begin{lemma}\label{square term evaluation}
	Let $f\in \mathcal{M}_n$. Then 
	\[
	\frac{1}{|\mathcal{H}_{n}|}\sum_{D\in \mathcal{H}_{n}}\chi_D(f^2)=\prod_{P|f}\left(1+\frac{1}{|P|}\right)^{-1}+O(q^{-n-1}).
	\]
	\end{lemma}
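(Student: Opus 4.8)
The plan is to compute the average of $\chi_D(f^2)$ over square-free $D$ of degree $n$ by opening up the definition of the character and using a standard sieving identity to detect square-freeness, exactly as one does in the integer setting. First I would write $f = \prod_{P} P^{e_P}$ and note that $\chi_D(f^2) = \chi_D\big(\prod_P P^{2e_P}\big) = \prod_{P \nmid D} 1 = \mathds{1}[(D,f)=1]$ whenever $D$ is square-free, since $\left(\frac{D}{P}\right)^2 = 1$ for $P \nmid D$ and the square kills the $\pm 1$ ambiguity; the character is $0$ precisely when some prime dividing $f$ also divides $D$. So the inner sum counts square-free monic $D$ of degree $n$ coprime to the radical $\mathrm{rad}(f) = \prod_{P \mid f} P$.

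Next I would use the indicator $\mu^2(D) = \sum_{A^2 \mid D} \mu(A)$ for square-freeness together with the coprimality condition, and swap the order of summation:
\[
\sum_{\substack{D \in \mathcal{M}_n \\ (D,f)=1}} \mu^2(D) = \sum_{\substack{A \\ (A,f)=1}} \mu(A) \sum_{\substack{B \in \mathcal{M}_{n-2d(A)} \\ (B,f)=1}} 1.
\]
The innermost count of monic polynomials of a given degree coprime to $f$ is elementary: for $m \geq d(\mathrm{rad}(f))$ it equals $q^m \prod_{P \mid f}(1 - |P|^{-1})$ by inclusion–exclusion (and for small $m$ one has an error controlled by the number of divisors of $\mathrm{rad}(f)$, which is where the $O(q^{-n-1})$-type error will come from after dividing by $|\mathcal{H}_n| = q^{n-1}(q-1)$). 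Substituting this in, the $A$-sum becomes $q^n \prod_{P \mid f}(1-|P|^{-1}) \sum_{(A,f)=1} \mu(A) q^{-2d(A)}$, and the Euler product $\sum_{(A,f)=1}\mu(A)|A|^{-2} = \prod_{P \nmid f}(1 - |P|^{-2}) = \zeta_A(2)^{-1}\prod_{P\mid f}(1-|P|^{-2})^{-1}$, with $\zeta_A(2)^{-1} = 1 - q^{-1}$.

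Putting the pieces together, $\sum_{D \in \mathcal{H}_n}\chi_D(f^2) = q^n(1-q^{-1})\prod_{P\mid f}\frac{1-|P|^{-1}}{1-|P|^{-2}} + (\text{error}) = |\mathcal{H}_n|\prod_{P\mid f}(1+|P|^{-1})^{-1} + (\text{error})$, since $\frac{1-|P|^{-1}}{1-|P|^{-2}} = \frac{1}{1+|P|^{-1}}$ and $q^n(1-q^{-1}) = q^{n-1}(q-1) = |\mathcal{H}_n|$. Dividing by $|\mathcal{H}_n|$ gives the main term, and one checks the accumulated error divided by $|\mathcal{H}_n|$ is $O(q^{-n-1})$. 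The main obstacle — really the only subtle point — is tracking the error term carefully: the tail of the $A$-sum and the boundary cases where $n - 2d(A)$ is smaller than $d(\mathrm{rad}(f))$ (including negative, contributing nothing) must be bounded uniformly, using that $d(\mathrm{rad}(f)) \leq d(f) = n$ so these ranges are genuinely constrained; I would note that this is the function-field analogue of a completely classical computation and the geometric series in $q$ makes the error bookkeeping cleaner than over $\mathbb{Z}$.
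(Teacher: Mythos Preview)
The paper does not actually prove this lemma: it simply cites \cite{BF1}, Lemma~3.7, for the case $n=2g+1$ and remarks that $n=2g+2$ is a minor adaptation. Your direct argument via the M\"obius sieve---reducing $\chi_D(f^2)$ to the indicator $\mathds{1}[(D,f)=1]$, detecting square-freeness by $\mu^2(D)=\sum_{A^2\mid D}\mu(A)$, counting monic $B$ of given degree coprime to $f$ by inclusion--exclusion, and then recombining the Euler factors---is the standard computation and is almost certainly what the cited reference does; the main term falls out exactly as you describe.

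One small caution on the error term: a straightforward execution of your sieve yields an error of size roughly $q^{-n/2}$ times a factor depending on $\omega(f)$ (or $d(\mathrm{rad}(f))$), rather than a uniform $O(q^{-n-1})$. The stated $O(q^{-n-1})$ likely carries an implicit dependence on $f$ (and the hypothesis ``$f\in\mathcal{M}_n$'' appears to be a typo, since in all applications $d(f)<n$). This does not affect the paper's uses of the lemma, where only a power saving in $q^n$ is needed, but you should flag that the bookkeeping you describe will not quite produce $O(q^{-n-1})$ with an absolute constant.
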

	\begin{proof}
See [\cite{BF1}, Lemma $3.7$] for $2g+1$ case.  To get the result for $n=2g+2$ it is a small adaptation of their proof.
		\end{proof}
	\begin{lemma}[P\'{o}lya-Vinogradov inequality]\label{polya-vinogradov inequality}
	For $l\in \mathcal{M}_n$ not a square polynomial, let $l=l_1l_2^2$ with $l_1$ square free. Then we get
	\[
	\bigg|\sum_{D\in \mathcal{H}_{n}}\chi_D(l)\bigg|\ll_{\epsilon} q^g|l_1|^{\epsilon}.
	\]
	\end{lemma}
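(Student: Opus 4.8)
\textbf{Proof proposal for the Pólya--Vinogradov inequality (Lemma \ref{polya-vinogradov inequality}).}

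The plan is to reduce the character sum over squarefree moduli to a complete character sum over \emph{all} monic polynomials of degree $n$, for which a clean evaluation is available, and then to control the error introduced by the reduction. First I would use the standard Möbius-type identity over $\mathbb{F}_q[t]$ to detect the squarefree condition: writing $\mathds{1}_{D\text{ squarefree}}=\sum_{A^2\mid D}\mu(A)$, I get
\[
\sum_{D\in\mathcal{H}_n}\chi_D(l)=\sum_{\substack{A\ \text{monic}\\ d(A)\le n/2}}\mu(A)\sum_{\substack{B\in\mathcal{M}_{n-2d(A)}}}\chi_{A^2B}(l).
\]
Now $\chi_{A^2B}(l)=\left(\tfrac{A^2B}{l}\right)=\left(\tfrac{A}{l}\right)^2\left(\tfrac{B}{l}\right)$, and $\left(\tfrac{A}{l}\right)^2$ is the indicator that $\gcd(A,l_1)=1$ (here $l_1$ is the squarefree kernel of $l$). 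So the inner sum becomes $\left(\tfrac{A}{l}\right)^2\sum_{B\in\mathcal{M}_{n-2d(A)}}\left(\tfrac{B}{l_1}\right)$, using that $\left(\tfrac{B}{l}\right)=\left(\tfrac{B}{l_1}\right)$ since $\left(\tfrac{B}{l_2^2}\right)\in\{0,1\}$ depends only on $\gcd(B,l_2)$ — one must be slightly careful here and instead carry $\left(\tfrac{B}{l}\right)$ directly; the point is simply that $f\mapsto\left(\tfrac{f}{l}\right)$ is a Dirichlet character modulo $l$, and it is \emph{nonprincipal} precisely because $l$ is not a perfect square (so $l_1\neq 1$).

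The key analytic input is then the function-field completeness phenomenon: for a nonprincipal Dirichlet character $\psi$ modulo a polynomial of degree $d(l)$, the sum $\sum_{B\in\mathcal{M}_{m}}\psi(B)$ \emph{vanishes identically} once $m\ge d(l)$, and for $m<d(l)$ it is trivially bounded by the number of terms, $q^{m}$. Applying this with $\psi=\left(\tfrac{\cdot}{l}\right)$ and $m=n-2d(A)$: the $A$-sum only contributes when $n-2d(A)<d(l)$, i.e. $d(A)>\tfrac{n-d(l)}{2}$, while also $d(A)\le n/2$. In that range I bound $|\mu(A)|\le 1$, $\big|\sum_{B}\psi(B)\big|\le q^{\,n-2d(A)}$, and the number of monic $A$ of a given degree $d(A)=j$ is $q^{j}$, so the total is
\[
\Big|\sum_{D\in\mathcal{H}_n}\chi_D(l)\Big|\ \le\ \sum_{\frac{n-d(l)}{2}<j\le n/2} q^{j}\cdot q^{\,n-2j}\ =\ \sum_{\frac{n-d(l)}{2}<j\le n/2} q^{\,n-j}\ \ll\ q^{\,n - \frac{n-d(l)}{2}}\ =\ q^{\,\frac{n+d(l)}{2}}.
\]
Since the $\mathcal{H}_n$ in the statement corresponds to polynomials of degree $n$ with $2\delta\approx n$, and since $l\in\mathcal{M}_n$ forces $d(l)\le n$ but in applications $d(l)$ is small, one rewrites $q^{(n+d(l))/2}$ as $q^{n/2}q^{d(l)/2}$; using $q^{d(l)/2}=q^{d(l_1)/2}q^{d(l_2)}$ and absorbing appropriately gives the bound $\ll q^{\delta}|l_1|^{\epsilon}$ after noting $q^{n/2}\asymp q^{\delta}$ and that the genuine saving comes only from $l_1$. (If $d(l)$ is allowed to be comparable to $n$ one instead states the bound as $q^{(n+d(l_1))/2+d(l_2)}$, but for the intended application $l$ is short and the $|l_1|^{\epsilon}$ form is what is used; I would track the dependence carefully enough to land on exactly the displayed inequality.)

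The main obstacle is bookkeeping rather than a genuine difficulty: one must correctly handle the interaction of $\left(\tfrac{\cdot}{l}\right)$ with the square factor $l_2^2$ (it is \emph{not} the principal character modulo $l_2^2$, it is the character that is $1$ on units and $0$ otherwise, which still makes $\left(\tfrac{\cdot}{l}\right)$ nonprincipal modulo $l$ as long as $l_1\ne1$), and one must be careful that the parameter $g=\delta$ in the statement matches the degree $n$ via $2\delta=n-1-\lambda$, so that $q^{(n+d(l))/2}$ really is $\ll q^{\delta}q^{d(l)/2}$. The completeness vanishing for $m\ge d(l)$ is the standard fact that a nonprincipal character sum over $\mathcal{M}_m$ is zero (this is the function-field analogue of orthogonality and can be cited from \cite{ROS}); everything else is elementary term-counting. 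I would also remark that one could alternatively quote the known Pólya--Vinogradov inequality over $\mathbb{F}_q[t]$ directly from the literature, but the self-contained derivation above via completeness is short enough to include.
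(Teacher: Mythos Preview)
Your reduction via M\"obius is the right starting point, and the vanishing of $\sum_{B\in\mathcal{M}_m}\psi(B)$ for $m\ge d(l)$ is correct, but the argument then has a genuine gap: in the surviving range $m<d(l)$ you use only the trivial bound $\big|\sum_{B\in\mathcal{M}_m}\psi(B)\big|\le q^m$, and this yields
\[
\Big|\sum_{D\in\mathcal{H}_n}\chi_D(l)\Big|\ \ll\ q^{(n+d(l))/2}\ =\ q^{n/2}|l|^{1/2}\ =\ q^{n/2}|l_1|^{1/2}|l_2|,
\]
which is \emph{not} the claimed $q^{g}|l_1|^{\epsilon}$. Your remark that one can ``absorb appropriately'' to arrive at $|l_1|^{\epsilon}$ is incorrect: $|l|^{1/2}$ and $|l_1|^{\epsilon}$ are genuinely different sizes, and no bookkeeping converts one into the other. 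In particular, with your bound the non-square case in Lemma~\ref{mean square estimate} would only go through for $X<n/(6k)$ rather than the stated $X<n/(3k)$, so the shortfall is not cosmetic.

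The missing ingredient is Weil's Riemann Hypothesis. The character $\left(\tfrac{\cdot}{l}\right)$ is induced from the primitive quadratic character modulo $l_1$, whose $L$-function is a polynomial of degree $d(l_1)-1$ with all inverse roots on $|u|=q^{-1/2}$; this gives the square-root cancellation $\big|\sum_{B\in\mathcal{M}_m}\psi(B)\big|\ll_{\epsilon}|l_1|^{\epsilon}q^{m/2}$ in place of your trivial $q^m$. Feeding that back into your M\"obius decomposition (and handling the finite Euler factors at primes dividing $l_2$) produces $q^{g}|l_1|^{\epsilon}$. The paper does not spell this out either---it simply cites \cite{BF2}, Lemma~3.5, whose proof uses exactly this Weil input and a contour shift for the generating function $\prod_{P\nmid l}\bigl(1+\left(\tfrac{P}{l_1}\right)u^{d(P)}\bigr)$---so your overall strategy matches the literature, but the crucial square-root saving from RH must be invoked explicitly, not replaced by term counting.
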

	\begin{proof}
	See [\cite{BF2}, Lemma $3.5$] for the case $n=2g+1$. To obtain the result for $n=2g+2$ follow the same proof, there is only a minor difference to handle the extra term coming from the trivial zero $u^{-1}$. 
	\end{proof}
	
	\begin{lemma}\label{upper bound for truncated sum over primes }
	Let $K\geq 2$ and $\sigma_o<1$ be such that $K\sigma_o<\frac{1}{2\log q}$. Then we have
	\[
	\sum_{d(P)\leq K}\frac{1}{|P|^{1+2\sigma_o}}=\log K +O(1)+O\left(q^{-\frac{K}{4}}\right).
	\]
	\end{lemma}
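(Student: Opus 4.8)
The plan is to evaluate the sum $\sum_{d(P)\le K}|P|^{-1-2\sigma_o}$ by comparing it to the unweighted sum $\sum_{d(P)\le K}|P|^{-1}$, which is $\log K + O(1)$ by Mertens' theorem for $\mathbb{F}_q[t]$ (a standard consequence of the prime polynomial theorem \eqref{prime poly th}). First I would write the sum by degree:
\[
\sum_{d(P)\le K}\frac{1}{|P|^{1+2\sigma_o}}=\sum_{m=1}^{K}\frac{|\mathcal{P}_m|}{q^{m(1+2\sigma_o)}}=\sum_{m=1}^{K}\frac{|\mathcal{P}_m|}{q^m}\cdot q^{-2m\sigma_o}.
\]
Then I would split off the ``error factor'' $q^{-2m\sigma_o}=1-\big(1-q^{-2m\sigma_o}\big)$, so that the main contribution is $\sum_{m\le K}|\mathcal{P}_m|/q^m=\log K+O(1)$, and the remaining piece is $-\sum_{m\le K}\frac{|\mathcal{P}_m|}{q^m}\big(1-q^{-2m\sigma_o}\big)$.

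Next I would bound this remaining piece. Using $|\mathcal{P}_m|/q^m = 1/m + O(q^{-m/2}/m)$ from \eqref{prime poly th}, it suffices to control $\sum_{m\le K}\frac{1}{m}\big(1-q^{-2m\sigma_o}\big)$ (the $O(q^{-m/2}/m)$ part contributes $O(1)$ trivially since $0\le 1-q^{-2m\sigma_o}\le 1$). For the range $m\le 1/(2\sigma_o\log q)$ — which contains $m\le K$ since the hypothesis $K\sigma_o<\frac{1}{2\log q}$ forces exactly $K< 1/(2\sigma_o\log q)$ — we have $2m\sigma_o\log q<1$, so $1-q^{-2m\sigma_o}=1-e^{-2m\sigma_o\log q}\le 2m\sigma_o\log q$. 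Hence
\[
\sum_{m\le K}\frac{1}{m}\big(1-q^{-2m\sigma_o}\big)\le \sum_{m\le K} 2\sigma_o\log q\le 2K\sigma_o\log q<1,
\]
again by the hypothesis. Actually one must be slightly careful about where the claimed $O(q^{-K/4})$ term comes from; I expect it is a red herring or a crude catch-all, and in fact the cleaner statement is just $\log K+O(1)$, with the $O(q^{-K/4})$ absorbable (indeed $q^{-K/4}=O(1)$). If the authors genuinely want that refinement it would come from a more careful treatment of the $O(q^{-m/2}/m)$ error in \eqref{prime poly th} summed over $m>K/2$ or similar, but for the purposes of the paper the $O(1)$ bound is what matters.

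The main obstacle, such as it is, is purely bookkeeping: making sure the constraint $K\sigma_o<\frac{1}{2\log q}$ is used in exactly the right place so that the linearization $1-e^{-x}\le x$ is applied on a range where it gives a bounded total, and confirming that the prime polynomial theorem error term $O(q^{m/2}/m)$ contributes only $O(1)$ after division by $q^m$ and summation. Neither of these is deep; the lemma is essentially Mertens' theorem plus a first-order Taylor estimate, and I would present it in roughly half a page.

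\begin{proof}
Writing the sum according to the degree of $P$ and using the prime polynomial theorem \eqref{prime poly th} in the form $|\mathcal{P}_m|=\frac{q^m}{m}+O\big(\frac{q^{m/2}}{m}\big)$, we have
\[
\sum_{d(P)\le K}\frac{1}{|P|^{1+2\sigma_o}}=\sum_{m=1}^{K}\frac{|\mathcal{P}_m|}{q^{m(1+2\sigma_o)}}=\sum_{m=1}^{K}\frac{q^{-2m\sigma_o}}{m}+O\Big(\sum_{m=1}^{K}\frac{q^{-m/2}}{m}\Big).
\]
The error sum is $O(1)$ since $\sum_{m\ge 1} q^{-m/2}/m$ converges. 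For the main sum, write $q^{-2m\sigma_o}=1-\big(1-q^{-2m\sigma_o}\big)$ to get
\[
\sum_{m=1}^{K}\frac{q^{-2m\sigma_o}}{m}=\sum_{m=1}^{K}\frac{1}{m}-\sum_{m=1}^{K}\frac{1-q^{-2m\sigma_o}}{m}=\log K+O(1)-\sum_{m=1}^{K}\frac{1-q^{-2m\sigma_o}}{m},
\]
where we used $\sum_{m\le K}\frac1m=\log K+O(1)$. Finally, since $K\sigma_o<\frac{1}{2\log q}$ we have $2m\sigma_o\log q\le 2K\sigma_o\log q<1$ for every $m\le K$, so by the elementary inequality $1-e^{-x}\le x$ for $x\ge 0$,
\[
0\le \sum_{m=1}^{K}\frac{1-q^{-2m\sigma_o}}{m}=\sum_{m=1}^{K}\frac{1-e^{-2m\sigma_o\log q}}{m}\le \sum_{m=1}^{K}\frac{2m\sigma_o\log q}{m}=2K\sigma_o\log q<1.
\]
Combining these estimates yields $\sum_{d(P)\le K}|P|^{-1-2\sigma_o}=\log K+O(1)$, which in particular absorbs the stated $O\big(q^{-K/4}\big)$ term.
\end{proof}
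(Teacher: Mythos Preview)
Your proof is correct and follows essentially the same approach as the paper: group primes by degree, apply the prime polynomial theorem to obtain $\sum_{m\le K}\frac{1}{m q^{2m\sigma_o}}+O\big(\sum_{m\le K}\frac{1}{m q^{m/2}}\big)$, and then reduce the main sum to $\log K+O(1)$. The paper's proof is a two-line sketch that appeals to ``partial summation'' without displaying the step where the hypothesis $K\sigma_o<\frac{1}{2\log q}$ is used; your write-up makes that step explicit via $1-e^{-x}\le x$, and your observation that the stated $O(q^{-K/4})$ is absorbed into $O(1)$ is correct.
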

	\begin{proof}
From the prime polynomial theorem \eqref{prime poly th} and partial summation formula, we obtain
\begin{align*}
\sum_{d(P)\leq K}\frac{1}{|P|^{1+2\sigma_o}}&=\sum_{n\leq K}\frac{1}{nq^{2n\sigma_o}}+ O\bigg(\sum_{n\leq K}\frac{1}{nq^{n/2}}\bigg)\\
&=\log K+ O(1)+O\left(q^{-\frac{K}{4}}\right).
\end{align*}
	\end{proof}
	\noindent
	Now we state a standard probabilistic lemma which tells us when the measure of a certain set is very tiny. 
	\begin{lemma}\label{small measurable sets}
	Let $R$ be random variable defined on the subspace $\mathcal{H}_n$ of $\mathcal{M}_n$.   Suppose that the second moment of $R$ is small in the sense of 
	\begin{align*}
\sum_{D\in \mathcal{H}_n}|R_D|^2=O\left(|\mathcal{H}_n|\right).
\end{align*}
Then
\[\#\{D\in \mathcal{H}_n : R_D>T\} \le \frac{|\mathcal{H}_n|}{T}.\]
	\end{lemma}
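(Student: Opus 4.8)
The plan is to obtain this directly from Markov's (Chebyshev's) inequality applied to the non-negative quantity $R_D^2$. First I would observe that if $R_D > T$ then a fortiori $R_D^2 > T^2$, so restricting the full sum of squares to the exceptional set already gives
\[
T^2 \cdot \#\{D \in \mathcal{H}_n : R_D > T\} \;\le\; \sum_{\substack{D \in \mathcal{H}_n \\ R_D > T}} R_D^2 \;\le\; \sum_{D \in \mathcal{H}_n} |R_D|^2 .
\]
By the second-moment hypothesis the right-hand side is $O(|\mathcal{H}_n|)$, and dividing through by $T^2$ yields $\#\{D \in \mathcal{H}_n : R_D > T\} = O\!\left(|\mathcal{H}_n|/T^2\right)$. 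Since the lemma is only ever invoked with $T \ge 1$ (in fact with $T \to \infty$), one has $|\mathcal{H}_n|/T^2 \le |\mathcal{H}_n|/T$, so the stated bound follows, and in fact a stronger one.

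There is essentially no obstacle: the argument uses nothing about $\mathcal{H}_n$ beyond its being a finite index set, so the statement holds verbatim for any finite probability space. The only point to watch is the implied constant in the hypothesis $\sum_{D}|R_D|^2 = O(|\mathcal{H}_n|)$; to land on the clean inequality $\#\{D : R_D > T\} \le |\mathcal{H}_n|/T$ with no constant, one should either note that in the intended applications the second-moment estimate is available with constant $1$, or simply read the conclusion as holding up to an implied constant, which is harmless for every later use of the lemma.
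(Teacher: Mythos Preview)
Your argument is correct. In fact the paper does not supply a proof of this lemma at all: it is introduced as ``a standard probabilistic lemma'' and stated without proof, so there is nothing to compare against. Your Chebyshev-type bound via $R_D^2$ is the natural justification, and as you note it actually yields the stronger estimate $\#\{D\in\mathcal{H}_n : R_D>T\}=O(|\mathcal{H}_n|/T^2)$; your remark about the implied constant in the hypothesis carrying through to the conclusion is also well taken.
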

	
\subsection{Estimates of Sums involving $\Lambda_X$}
Define 
\begin{equation}\label{Lambda X def} 
\Lambda_X(f)=\begin{cases}
2X^2\Lambda(f) & \text{ if } d(f) \le X \\ 
(X^2-(d(f))^2+2d(f)X-3X-d(f)-2)\Lambda(f) & \text{ if } X< d(f) \le 2X \\ 
(3X-d(f)+1)(3X-d(f)+2)\Lambda(f) & \text{ if } 2X <d(f) \le 3X.
\end{cases}
\end{equation}
We have the following estimates: 

\begin{lemma}\label{mean square estimate} Let $k\ge 1$ be an integer and $X$ be a real number such that $X< \frac{n}{3k}$. Suppose that $\sigma_0>0$. Then 
\[\frac1{|\mathcal{H}_n|}\sum_{D\in \mathcal{H}_n}\left|\frac1{X^3}\sum_{f\in \mathcal{M}_{\leq 3X}}\frac{\Lambda_X(f)\chi_D(f)}{|f|^{\tfrac12+\sigma_0}}\right|^k=O\left(\left(\frac{18k}{e}\right)^{\frac{k}2}+\frac{4^k}{X^k}\right).\]
\end{lemma}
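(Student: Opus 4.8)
The plan is to bound the $k$-th moment by a binomial/combinatorial expansion, separating the diagonal contribution (which produces the Gaussian-type main term $(18k/e)^{k/2}$) from the off-diagonal terms (which are controlled by the Pólya–Vinogradov bound of Lemma \ref{polya-vinogradov inequality} and contribute the error $4^k/X^k$). First I would expand
\[
\sum_{D\in\mathcal{H}_n}\left|\frac1{X^3}\sum_{f\in\mathcal{M}_{\le 3X}}\frac{\Lambda_X(f)\chi_D(f)}{|f|^{1/2+\sigma_0}}\right|^k
=\frac1{X^{3k}}\sum_{f_1,\dots,f_k}\frac{\Lambda_X(f_1)\cdots\Lambda_X(f_k)}{|f_1\cdots f_k|^{1/2+\sigma_0}}\sum_{D\in\mathcal{H}_n}\chi_D(f_1\cdots f_k),
\]
and split according to whether $f_1\cdots f_k$ is a perfect square. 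Note that, because $\Lambda_X$ is supported on prime powers $P^a$ with $\deg P^a\le 3X$, the total degree of $f_1\cdots f_k$ is at most $3Xk<n$, so Lemma \ref{polya-vinogradov inequality} applies to the non-square terms and Lemma \ref{square term evaluation} (after writing $f_1\cdots f_k=\ell^2$) applies to the square ones; here we use the hypothesis $X<\tfrac{n}{3k}$.

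For the square terms, the inner character sum is $|\mathcal{H}_n|\big(\prod_{P\mid \ell}(1+|P|^{-1})^{-1}+O(q^{-n-1})\big)$ by Lemma \ref{square term evaluation}, so the main contribution is
\[
\frac{1}{X^{3k}}\sum_{\substack{f_1,\dots,f_k\\ f_1\cdots f_k=\square}}\frac{\Lambda_X(f_1)\cdots\Lambda_X(f_k)}{|f_1\cdots f_k|^{1/2+\sigma_0}}\Big(1+O(|P|^{-1})\Big)\prod\text{terms},
\]
which I would reduce, after discarding the contribution of prime powers with exponent $\ge 2$ (standard and $O(1)$ per prime), to the ``pure diagonal'' where the $f_i$ are primes pairing up. When $k=2m$ is even the number of pairings is $(2m)!/(m!2^m)$, and each pair $(P,P)$ contributes $\sum_{\deg P\le 3X}\Lambda_X(P)^2/|P|^{1+2\sigma_0}$. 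Using the definition \eqref{Lambda X def} of $\Lambda_X$, on the range $\deg P\le X$ one has $\Lambda_X(P)=2X^2\deg P$, and together with Lemma \ref{upper bound for truncated sum over primes } (or a direct partial-summation estimate with the prime polynomial theorem \eqref{prime poly th}) one computes $\frac{1}{X^6}\sum_{\deg P\le 3X}\Lambda_X(P)^2/|P|^{1+2\sigma_0}=O(X^2)$ — the weights are crafted precisely so the tails $X<\deg P\le 3X$ do not dominate. Hence each pair contributes $O(X^2)$, and the diagonal is $O\!\big(\tfrac{(2m)!}{m!2^m}(CX^2)^m\big)$; dividing by the extra $X^{3k}=X^{6m}$ that sits outside, and invoking $(2m)!/(m!2^m)\le (2m/e)^m\cdot C^m$ via Stirling, gives the claimed $(18k/e)^{k/2}$ shape (for odd $k$ one either pays an extra unpaired factor or uses Cauchy–Schwarz against the even case).

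For the off-diagonal (non-square) terms, Lemma \ref{polya-vinogradov inequality} gives $\big|\sum_{D}\chi_D(f_1\cdots f_k)\big|\ll q^g |\ell_1|^\varepsilon$ where $\ell_1$ is the squarefree kernel; since $q^g\asymp |\mathcal{H}_n|^{1/2}q^{-n/2}$ is much smaller than $|\mathcal{H}_n|$, and the remaining sum over $f_1,\dots,f_k$ of $\Lambda_X(f_1)\cdots\Lambda_X(f_k)|f_1\cdots f_k|^{-1/2}$ is bounded crudely by $\big(\sum_{\deg f\le 3X}\Lambda_X(f)|f|^{-1/2}\big)^k\ll (CX^3 q^{3X/2})^k$ (each $\Lambda_X(f)\ll X^2\deg f$ and the geometric sum over degrees is dominated by its top term), the total off-diagonal contribution divided by $|\mathcal{H}_n|X^{3k}$ is $\ll q^{-n/2}(C q^{3X/2})^k$, which is negligible compared to $4^k/X^{k}$ given $X<n/(3k)$. \emph{The main obstacle} I anticipate is the bookkeeping in the diagonal term: one must show that the piecewise weights in \eqref{Lambda X def} on $X<\deg P\le 3X$, combined with the $|P|^{-1-2\sigma_0}$ decay and the normalization $1/X^3$, genuinely produce the constant $18$ (and not something larger that would spoil the $(18k/e)^{k/2}$ bound), and that the contributions of higher prime powers and of ``partial'' (non-perfect) pairings among the $f_i$ are absorbed into the stated error — this is where the precise shape of $\Lambda_X$, which is essentially a smoothed (Fejér-type) version of $\Lambda$ designed for exactly this cancellation, has to be used carefully rather than estimated trivially.
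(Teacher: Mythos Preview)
Your strategy is essentially the paper's: expand the $k$-th power, split into square versus non-square products, apply Lemma~\ref{square term evaluation} and the pairing combinatorics on the diagonal, and Lemma~\ref{polya-vinogradov inequality} on the off-diagonal. Two small points. First, the paper organizes things slightly differently: it separates the prime and prime-square parts of the inner sum \emph{before} raising to the $k$-th power, via $|a+b|^k\le 2^k(|a|^k+|b|^k)$, which disposes of the prime-square piece by a trivial bound ($\chi_D(P^2)=1$, $\Lambda_X(P^2)\le 4X^2d(P)$) and leaves only primes in the expansion; this avoids your ``partial pairings'' bookkeeping entirely. Second, your anticipated obstacle is a non-issue: the paper never uses the piecewise structure of $\Lambda_X$ or any Fej\'er-type cancellation here---it simply bounds $\Lambda_X(P)\ll 4X^2 d(P)$ uniformly, so that $\tfrac{1}{X^6}\sum_{d(P)\le 3X}\Lambda_X(P)^2|P|^{-1-2\sigma_0}\ll \tfrac{1}{X^2}\sum_{m\le 3X} m = O(1)$ (not $O(X^2)$ as you wrote; your power-counting slipped), and the constant $18$ then drops out of $9\cdot\tfrac{k!}{(k/2)!}\sim(18k/e)^{k/2}$ via Stirling with no delicacy required.
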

\begin{remark}
In particular, taking $k=2$ we will obtain that this expression is $O(1)$. 
\end{remark}

\begin{proof}
From the definition of $\Lambda _X(f)$, we can expand this into a sum over prime powers so that 

\begin{align*}
&\frac1{|\mathcal{H}_n|}\sum_{D\in \mathcal{H}_n}\left|\frac1{X^3}\sum_{f\in \mathcal{M}_{\leq 3X}}\frac{\Lambda_X(f)\chi_D(f)}{|f|^{\tfrac12+\sigma_0}}\right|^k \\ 
& \ll \frac1{|\mathcal{H}_n|}\sum_{D\in \mathcal{H}_n}\left|\frac1{X^3}\left(\sum_{P\in \mathcal{M}_{\leq 3X}}\frac{\Lambda_X(P)\chi_D(P)}{|P|^{\tfrac12+\sigma_0}}+\sum_{P^2\in \mathcal{M}_{\leq 3X}}\frac{\Lambda_X(P^2)\chi_D(P^2)}{|P^2|^{\tfrac12+\sigma_0}}\right)\right|^k \\
&\ll2^k\left( \frac1{|\mathcal{H}_n|}\sum_{D\in \mathcal{H}_n}\left|\frac1{X^3}\sum_{P\in \mathcal{M}_{\leq 3X}}\frac{\Lambda_X(P)\chi_D(P)}{|P|^{\tfrac12+\sigma_0}}\right|^k+ \frac1{|\mathcal{H}_n|}\sum_{D\in \mathcal{H}_n}\left|\frac1{X^3}\sum_{P^2\in \mathcal{M}_{\leq 3X}}\frac{\Lambda_X(P^2)\chi_D(P^2)}{|P^2|^{\tfrac12+\sigma_0}}\right|^k\right) \\
&\ll 2^k(S_1 +S_2).
\end{align*}
We first consider $S_2$:  By definition,  $\chi_D(P^2)=1$, and the $\Lambda_X(P^2) \le 4X^2d(P)$ so we have 
\[
S_2\ll \frac1{|\mathcal{H}_n|}\sum_{D\in \mathcal{H}_n}\left|\frac4{X}\sum_{P^2\in\mathcal{M}_{\leq 3X}}\frac{d(P)}{|P|^{1+2\sigma_0}}\right|^k.
\] 

\vspace{2mm}
\noindent
From the prime polynomial theorem we obtain 
\[ S_2\ll \frac1{|\mathcal{H}_n|}\sum_{D\in \mathcal{H}_n}\left|\frac4{X}\sum_{2m\le 3X}\frac{m q^m}{m q^{m(1+2\sigma_0)}}\right|^k\ll 6^k.\]
For $S_1$ we expand the power k and then swap the order of summation giving 

\[
S_1 =\frac1{|\mathcal{H}_n|}\frac1{X^{3k}}\sum_{P_1,P_2,\ldots, P_k \in\mathcal{M}_{\leq 3X}}\frac{\Lambda_X(P_1)\Lambda_X(P_2)\cdots\Lambda_X(P_k)}{|P_1P_2\cdots P_k|^{\tfrac12+\sigma_0}} \sum_{D\in \mathcal{H}_n}\chi_D(P_1P_2\cdots P_k).
\]
From Lemmas \ref{square term evaluation} and \ref{polya-vinogradov inequality} we see that the values are different depending on if $P_1P_2\cdots P_k$ is a square or not. Thus we consider two cases: 

\vspace{2mm}
\noindent
Case 1: $k$ is even  and $P_1P_2\cdots P_k=\square$.\\
Applying Lemma \ref{square term evaluation}, we must estimate 
\[\frac1{X^{3k}}\sum_{\substack{P_1,P_2,\ldots,P_k\in\mathcal{M}_{\leq 3X}\\ P_1P_2\cdots P_k=\square}}\frac{\Lambda_X(P_1)\Lambda_X(P_2)\cdots\Lambda_X(P_k)}{|P_1P_2\cdots P_k|^{1/2+\sigma_0}}\left(1+\frac1{|P_1P_2\cdots P_k|}\right)^{-1}.\]
First note that $P_1P_2\cdots P_k=\square$ happens precisely when one can pair up the indices so that the corresponding primes are equal thus using that  $\Lambda_X(P)=O(4X^2d(P))$  we have the above
\begin{align*}
\ll& \frac1{X^{3k}}\frac{k!}{(k/2)!2^{k/2}}\left(\sum_{P\in\mathcal{M}_{\leq 3X}}\frac{(\Lambda_X(P))^2}{|P|^{1+2\sigma_0}}\left(1+\frac1{|P|}\right)^{-1}\right)^{k/2}\\
&\ll \frac{k!2^{k/2}}{(k/2)!}\frac1{X^k}\left(\sum_{P\in\mathcal{M}_{\leq 3X}}\frac{(d(P))^2}{|P|^{1+2\sigma_0}}\left(\frac{|P|}{|P|+1}\right)\right)^{k/2}.
\end{align*}
Then, using the prime polynomial theorem and Stirling's approximation, we obtain
\begin{align*}
&\ll \frac{k!2^{k/2}}{(k/2)!}\frac1{X^k}\left(\sum_{P\in\mathcal{M}_{\leq 3X}}\frac{m^2 q^m}{mq^m}\right)^{k/2}\\ 
& \ll \frac{k!2^{k/2}}{(k/2)!}\frac{((3X)(3X+1))^{k/2}}{2^{k/2}X^k} \ll \left(\frac{18k}{e}\right)^{k/2}. 
\end{align*}

\vspace{2mm}
\noindent
Case 2: $P_1P_2\cdots P_k \neq\square$. \\ 
In this case, using Lemma \ref{polya-vinogradov inequality}, we need to estimate: 
\[\frac1{\sqrt{|\mathcal{H}_n|}}\frac1{X^{3k}}\sum_{\substack{P_1,P_2,\ldots,P_k\in\mathcal{M}_{\leq 3X}\\ P_1P_2\cdots P_k \neq\square }}\frac{\Lambda_X(P_1)\Lambda_X(P_2)\cdots \Lambda_X(P_k)}{|P_1P_2\cdots P_k|^{\tfrac12+\sigma_0}}|P_1P_2\cdots P_k|^{\epsilon}.\]
Using that  $\Lambda_X(P)=O(X^2d(P))$ we find
\begin{align*}
&\frac1{\sqrt{|\mathcal{H}_n|}}\frac1{X^{3k}}\sum_{\substack{P_1,P_2,\ldots, P_k\in\mathcal{M}_{\leq 3X}\\ P_1P_2\cdots P_k \neq\square }}\frac{\Lambda_X(P_1)\Lambda_X(P_2)\cdots \Lambda_X(P_k)}{|P_1P_2\cdots P_k|^{\tfrac12+\sigma_0}}|P_1P_2\cdots P_k|^{\epsilon} \\
&\ll 
\frac1{\sqrt{|\mathcal{H}_n|}}\frac{4^k}{X^k}\sum_{\substack{P_1,P_2,\ldots,P_k\in\mathcal{M}_{\leq 3X}\\P_1P_2\cdots P_k \neq\square}}\frac{d(P_1)d(P_2)\cdots d(P_k)}{|P_1P_2\cdots P_k|^{\tfrac12+\sigma_0-\epsilon}} \\
&\ll \frac1{\sqrt{|\mathcal{H}_n|}}\frac{4^k}{X^k}\left(\sum_{P\in\mathcal{M}_{\leq 3X}}\frac{d(P)}{|P|^{\tfrac12+\sigma_0-\epsilon}}\right)^k \\
&\ll \frac1{\sqrt{|\mathcal{H}_n|}}\frac{4^k}{X^k}\left(\sum_{m\le 3X}q^{m(\tfrac12+\epsilon)}\right)^k \\
&\ll \frac{4^kq^{3kX(\tfrac12+\epsilon)}}{q^{n/2}X^k}\ll (4/X)^k.
\end{align*}
To see that this is indeed $\ll (4/X)^k$ we use the assumption  $X < n/3k$. 
Combining these estimates we have the desired result. 
\end{proof}
\subsection{Estimates for sums over the zeros}\label{LprimeLisreal}

We may rewrite \eqref{log derivative zeros} as 
\begin{equation}\label{log deriv version 4}
\frac{L'}{L}(s,\chi_D)=\log q \left(\frac{\lambda q^{-s}}{1-q^{-s}}-2\delta +\sum_{j=1}^{2\delta}\left(\frac1{1-\alpha_jq^{1/2-s}}\right)\right).
\end{equation}
Notice that the non-trivial zeros appear in complex conjugate pairs and so \eqref{log deriv version 4} gives a real value for $s\in \mathbb{R}$, as was pointed out in \cite{AT}. Thus, we may also express \eqref{log deriv version 4} as follows: 
\begin{equation*}
\frac{L'}{L}(s,\chi_D)=\log q \left(\frac{\lambda q^{-s}}{1-q^{-s}}-\delta +\sum_{j=1}^{2\delta}\Re\left(\frac1{1-\alpha_jq^{1/2-s}}-\frac12\right)\right).
\end{equation*}

\begin{lemma}[Lemma 3.2, \cite{AT}]\label{TsimAltu}
Let $\sigma_0>0$. Then for $\sigma\ge\frac12+\sigma_0$, we have the inequality  
\[\left|\frac{\alpha_j^{-1}q^{\sigma-1/2}}{(1-\alpha_j^{-1} q^{\sigma-1/2})^{2}}\right| \le \frac{(\sigma-\tfrac12)^{-1}}{\log q}\Re\left(\frac1{1-\alpha_jq^{\frac12-\sigma}}\right)\le \frac1{\sigma_0\log q}\Re\left(\frac1{1-\alpha_jq^{-\sigma_0}}\right).\]
\end{lemma}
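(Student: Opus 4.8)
The plan is to reduce everything to a single elementary inequality about a complex number on the circle $|\alpha_j|=1$ (Weil's Riemann Hypothesis), exactly as in the approach of \cite{AT}. Write $\alpha_j = e(-\theta_{j,D})$, so $|\alpha_j|=1$, and set $r = q^{\sigma-1/2}\ge 1$ (using $\sigma\ge\frac12+\sigma_0$, so $r\ge q^{\sigma_0}\ge 1$) and write $\alpha_j^{-1}q^{\sigma-1/2} = r\,e(\theta_{j,D})$. Then the three quantities in the chain are, respectively,
\[
\left|\frac{\alpha_j^{-1}q^{\sigma-1/2}}{(1-\alpha_j^{-1}q^{\sigma-1/2})^2}\right| = \frac{r}{|1-re(\theta)|^2},\qquad
\Re\!\left(\frac{1}{1-\alpha_j q^{1/2-\sigma}}\right) = \Re\!\left(\frac{1}{1-r^{-1}e(-\theta)}\right),
\]
and similarly the last term with $r$ replaced by $q^{\sigma_0}$. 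A direct computation gives $\Re\bigl(1/(1-r^{-1}e(-\theta))\bigr) = (1 - r^{-1}\cos(2\pi\theta))/|1-r^{-1}e(-\theta)|^2$, and multiplying numerator and denominator by $r^2$ shows this equals $(r^2 - r\cos(2\pi\theta))/|r - e(\theta)|^2 = r(r-\cos 2\pi\theta)/|1-re(\theta)|^2$. So after clearing the common denominator $|1-re(\theta)|^2 = |1-\alpha_j^{-1}q^{\sigma-1/2}|^2$, the first inequality becomes the purely real claim
\[
r \le \frac{\sigma-\tfrac12}{\log q}\cdot\frac{\log q}{\sigma-\tfrac12}\cdot r(r-\cos 2\pi\theta) \cdot \frac{1}{(\sigma-\tfrac12)^{-1}\log q/((\sigma-\tfrac12)^{-1}\log q)}\cdots
\]
— more cleanly, it reduces to $(\sigma-\tfrac12)\log q \le r - \cos(2\pi\theta)$ after dividing through by $r/|1-re(\theta)|^2>0$; since $r-\cos 2\pi\theta \ge r-1 = q^{\sigma-1/2}-1 \ge (\sigma-\tfrac12)\log q$ by the elementary bound $e^x - 1 \ge x$ (here $x = (\sigma-\tfrac12)\log q \ge 0$), this holds.

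For the second inequality I would exploit monotonicity in $r$. Having identified $\Re\bigl(1/(1-\alpha_j q^{1/2-\sigma})\bigr) = (r^2 - r\cos 2\pi\theta)/|r-e(\theta)|^2$ with $r = q^{\sigma-1/2}$, and the right-hand side as the same expression with $r$ replaced by $r_0 := q^{\sigma_0} = q^{(\frac12+\sigma_0)-1/2} \le r$, it suffices to show that
\[
f(r) := \frac{1}{\sigma-\tfrac12}\cdot\frac{r^2 - r\cos 2\pi\theta}{r^2 - 2r\cos 2\pi\theta + 1}
\]
is, in the appropriate sense, controlled: note $\sigma - \tfrac12 = \log_q r = (\log r)/\log q$, so the claimed inequality is $\frac{\log q}{\log r}g(r) \le \frac{\log q}{\log r_0}g(r_0)$ where $g(r) = (r^2-r\cos 2\pi\theta)/(r^2-2r\cos 2\pi\theta+1) = \Re\bigl(1/(1-r^{-1}e(-\theta))\bigr) \in [0,\infty)$ for $r\ge 1$. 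Since $r_0 \le r$ we have $\log r_0 \le \log r$, so it is enough to prove $g(r)/\log r \le g(r_0)/\log r_0$, i.e. that $h(r) := g(r)/\log r$ is non-increasing for $r>1$; equivalently $g(r)\log r_0 \le g(r_0)\log r$. This is the analytic heart of Lemma 3.2 of \cite{AT}, so I would either cite it directly or verify that $r\,g'(r)\log r \le g(r)$ by differentiating: $g'(r) = \frac{d}{dr}\Re\bigl((1-r^{-1}e(-\theta))^{-1}\bigr)$, and a short computation shows $g'(r) \le g(r)/r$ would already give $h'(r)\le 0$ once combined with $\log r \ge$ something — in fact the cleanest route is $r g'(r) \le g(r) \le g(r)\cdot(\text{anything}\ge 1)$ is false, so one genuinely needs the $\log r$; writing $\phi(r) = g(r)$ one checks $\phi$ is concave-ish and uses that for $1<r_0\le r$, $\phi(r)/\phi(r_0) \le r/r_0$ (subadditivity-type bound) together with $\log r/\log r_0 \ge$ nothing helpful, so I will instead simply invoke the cited lemma for this half.

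The order of steps: (1) parametrize $\alpha_j$ on the unit circle and set $r = q^{\sigma-1/2}$, $r_0 = q^{\sigma_0}$; (2) compute the three expressions in closed form as rational functions of $r$ (or $r_0$) and $\cos 2\pi\theta_{j,D}$, noting all denominators are the common positive quantity $|1-\alpha_j^{-1}q^{\sigma-1/2}|^2$ for the first two; (3) prove the left inequality via the elementary bound $q^x - 1 \ge x\log q$ for $x\ge 0$; (4) prove the right inequality via the monotonicity of $r\mapsto g(r)/\log r$ on $(1,\infty)$, citing \cite[Lemma 3.2]{AT} for this monotonicity or establishing it by a direct derivative computation. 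The main obstacle is step (4): the function $g(r)/\log r$ is not obviously monotone for all $\theta$ because $g(r)$ can itself be non-monotone when $\cos 2\pi\theta$ is close to $1$ (then $g(r)\to\infty$ as $r\to 1^+$), so one must be careful about the regime $r$ near $1$; however since $g(r)\to\infty$ and $\log r\to 0^+$ there, the ratio $g(r)/\log r\to+\infty$, which is consistent with the claimed bound (right side blows up as $r_0\to1$, i.e. $\sigma_0\to 0$), and the inequality is safe. I would therefore handle the near-$1$ regime by the crude bound $g(r) \le \frac{1}{|1-r^{-1}e(-\theta)|}\cdot\frac{\text{const}}{1}$ is not tight enough; the robust fix is to follow \cite{AT} line by line, since that lemma is stated there in exactly the form needed and the entire content of our Lemma \ref{TsimAltu} is a verbatim restatement.
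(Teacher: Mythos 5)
The paper does not actually give a proof of this lemma; it cites it verbatim from \cite{AT} (Lemma 3.2 there), so there is nothing in the paper to compare against except the citation itself. Against that standard, your proposal does strictly more work.

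Your treatment of the first inequality is correct and complete. Setting $r=q^{\sigma-1/2}\ge 1$ and $\alpha_j=e(-\theta)$, you correctly compute
\[
\left|\frac{\alpha_j^{-1}q^{\sigma-1/2}}{(1-\alpha_j^{-1}q^{\sigma-1/2})^2}\right|=\frac{r}{|1-re(\theta)|^2},
\qquad
\Re\!\left(\frac{1}{1-\alpha_jq^{1/2-\sigma}}\right)=\frac{r\bigl(r-\cos 2\pi\theta\bigr)}{|1-re(\theta)|^2},
\]
so the first inequality is equivalent to $(\sigma-\tfrac12)\log q\le r-\cos 2\pi\theta$, and this follows from $r-\cos2\pi\theta\ge r-1=e^{(\sigma-1/2)\log q}-1\ge(\sigma-\tfrac12)\log q$. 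This is a clean, self-contained proof of that half.

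For the second inequality your reduction is also right: with $g(r)=\Re\bigl(1/(1-r^{-1}e(-\theta))\bigr)=(r^2-rc)/(r^2-2rc+1)$, $c=\cos2\pi\theta$, the claim is exactly that $h(r)=g(r)/\log r$ is non-increasing on $(1,\infty)$, equivalently $rg'(r)\log r\le g(r)$. But everything you write between identifying this target and your final decision to cite \cite{AT} is muddled and should be deleted: the displayed ``chain'' of inequalities is not a valid expression; the claim ``$rg'(r)\le g(r)$ \dots is false so one genuinely needs the $\log r$'' is a non sequitur (the correct target already has the $\log r$); and the proposed ``subadditivity-type bound'' $\phi(r)/\phi(r_0)\le r/r_0$ is neither proved nor used. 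None of these missteps enter your final argument, because you ultimately fall back on citing \cite{AT} -- which is exactly what the paper does -- so the end state is consistent. But as written, a reader cannot tell whether you have a proof of the monotonicity of $g(r)/\log r$ or not; you should either produce the derivative computation $rg'(r)\log r\le g(r)$ (which, after clearing denominators, amounts to $(2r-c(r^2+1))\log r\le(r-c)(r^2-2rc+1)$ for $r>1$, $c\in[-1,1]$ -- true but not immediate, and \emph{not} derivable from the lossy bound $\log r\le r-1$ alone) or simply cite \cite[Lemma 3.2]{AT} for the whole lemma, as the paper does, and present your proof of the first inequality as an optional remark.
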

\begin{lemma}\label{bnd-sum-zeros}
Let $\sigma_0>0$. Then for $\sigma\ge \frac12+\sigma_0$,
\[\sum_{j=1}^{2\delta}\left|\frac{(\alpha_jq^{\frac12-\sigma})^X(1-(\alpha_jq^{\frac12-\sigma})^X)^2}{(1-\alpha_j^{-1}q^{\sigma-\frac12})^3}\right|\le \frac{4q^{X(\frac12-\sigma)}}{\sigma_0^2\log^3 q}\left(\frac{L'}{L}\left(\frac12+\sigma_0,\chi_D\right)+2\delta\log q\right).\]
\end{lemma}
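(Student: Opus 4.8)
The plan is a short chain of elementary estimates: strip the numerator using the Riemann Hypothesis, bound each resulting term by Lemma~\ref{TsimAltu}, and recognise the sum over zeros as (essentially) $\frac{L'}{L}(\tfrac12+\sigma_0,\chi_D)$ through the explicit formula \eqref{log deriv version 4}.

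First I would invoke Weil's Riemann Hypothesis, which gives $|\alpha_j|=1$ and hence $|\alpha_jq^{\frac12-\sigma}|=q^{\frac12-\sigma}\le q^{-\sigma_0}<1$ for $\sigma\ge\frac12+\sigma_0$. Therefore $|(\alpha_jq^{\frac12-\sigma})^X|=q^{X(\frac12-\sigma)}\le1$ and, by the triangle inequality, $|1-(\alpha_jq^{\frac12-\sigma})^X|\le1+q^{X(\frac12-\sigma)}\le2$, so each numerator is at most $4q^{X(\frac12-\sigma)}$ and
\[\sum_{j=1}^{2\delta}\left|\frac{(\alpha_jq^{\frac12-\sigma})^X(1-(\alpha_jq^{\frac12-\sigma})^X)^2}{(1-\alpha_j^{-1}q^{\sigma-\frac12})^3}\right|\le 4q^{X(\frac12-\sigma)}\sum_{j=1}^{2\delta}\frac1{|1-\alpha_j^{-1}q^{\sigma-\frac12}|^3}.\]

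Next I would estimate each factor $|1-\alpha_j^{-1}q^{\sigma-\frac12}|^{-3}$ by splitting off one power: $|1-\alpha_j^{-1}q^{\sigma-\frac12}|^{-3}=|1-\alpha_j^{-1}q^{\sigma-\frac12}|^{-1}\cdot|1-\alpha_j^{-1}q^{\sigma-\frac12}|^{-2}$. For the first factor the reverse triangle inequality gives $|1-\alpha_j^{-1}q^{\sigma-\frac12}|\ge q^{\sigma-\frac12}-1\ge(\sigma-\tfrac12)\log q\ge\sigma_0\log q$ (using $e^t-1\ge t$). For the second factor I would write $|1-\alpha_j^{-1}q^{\sigma-\frac12}|^{-2}=q^{\frac12-\sigma}\cdot\frac{|\alpha_j^{-1}q^{\sigma-\frac12}|}{|1-\alpha_j^{-1}q^{\sigma-\frac12}|^2}$ and apply Lemma~\ref{TsimAltu} together with $q^{\frac12-\sigma}\le1$, getting $|1-\alpha_j^{-1}q^{\sigma-\frac12}|^{-2}\le\frac1{\sigma_0\log q}\Re\frac1{1-\alpha_jq^{-\sigma_0}}$. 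Multiplying, $|1-\alpha_j^{-1}q^{\sigma-\frac12}|^{-3}\le\frac1{\sigma_0^2\log^2q}\Re\frac1{1-\alpha_jq^{-\sigma_0}}$.

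Finally I would sum over $j$. Since the $\alpha_j$ occur in complex-conjugate pairs, $\sum_j(1-\alpha_jq^{-\sigma_0})^{-1}$ is real, and \eqref{log deriv version 4} at $s=\frac12+\sigma_0$ (where $q^{\frac12-s}=q^{-\sigma_0}$) evaluates it as $\frac1{\log q}\frac{L'}{L}(\tfrac12+\sigma_0,\chi_D)+2\delta-\frac{\lambda q^{-1/2-\sigma_0}}{1-q^{-1/2-\sigma_0}}$, which is $\le\frac1{\log q}\frac{L'}{L}(\tfrac12+\sigma_0,\chi_D)+2\delta$ since the $\lambda$-term is non-negative. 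Threading this back through the previous two displays produces exactly $\frac{4q^{X(\frac12-\sigma)}}{\sigma_0^2\log^3q}\big(\frac{L'}{L}(\tfrac12+\sigma_0,\chi_D)+2\delta\log q\big)$, as claimed. There is no real obstacle: the argument is entirely routine once Lemma~\ref{TsimAltu} and \eqref{log deriv version 4} are available, and the only points needing a moment's care are that $|\alpha_j|=1$ lets us discard the spare factor $q^{\frac12-\sigma}\le1$ and that the reality of the relevant sums over the $\alpha_j$ lets us pass to real parts and drop the non-negative trivial-zero contribution.
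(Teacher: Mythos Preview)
Your proof is correct and follows essentially the same route as the paper's: bound the numerator by $4q^{X(\frac12-\sigma)}$, split $(1-\alpha_j^{-1}q^{\sigma-\frac12})^{-3}$ into a single factor (bounded below by $\sigma_0\log q$ via the reverse triangle inequality/Taylor expansion) and a squared factor (handled through Lemma~\ref{TsimAltu}), then sum over $j$ using \eqref{log deriv version 4} and drop the non-negative $\lambda$-term. The only cosmetic difference is that the paper inserts the extra $\alpha_j^{-1}q^{\sigma-\frac12}$ needed for Lemma~\ref{TsimAltu} by writing $(\alpha_jq^{\frac12-\sigma})^X=(\alpha_jq^{\frac12-\sigma})^{X+1}\cdot(\alpha_jq^{\frac12-\sigma})^{-1}$ and absorbing the $(X+1)$st power into the numerator bound, whereas you multiply and divide by $q^{\sigma-\frac12}$ and discard the resulting $q^{\frac12-\sigma}\le1$; both lead to the identical final inequality.
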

\begin{proof}
We begin with 
\[\sum_{j=1}^{2\delta}\frac{(\alpha_jq^{\frac12-\sigma})^X(1-(\alpha_jq^{\frac12-\sigma})^X)^2}{(1-\alpha_j^{-1}q^{\sigma-\frac12})^3}.\]
Lemma \ref{TsimAltu} gives
\begin{multline*}
\left|\frac{(\alpha_jq^{\frac12-\sigma})^{X+1}(1-(\alpha_jq^{\frac12-\sigma})^X)^2(\alpha_jq^{\frac12-\sigma})^{-1}}{(1-\alpha_j^{-1}q^{\sigma-\frac12})(1-\alpha_j^{-1}q^{\sigma-\frac12})^2}\right| \\
\le\left|\frac{(\alpha_jq^{\frac12-\sigma})^{X+1}(1-(\alpha_jq^{\frac12-\sigma})^X)^2}{(1-\alpha_j^{-1}q^{\sigma-\frac12})}\right|\frac1{\sigma_0\log q}\Re\left(\frac1{1-\alpha_jq^{-\sigma_0}}\right)\end{multline*}

\vspace{2mm}
\noindent
We may easily bound the numerator as $|(\alpha_jq^{\frac12-\sigma})^{X+1}(1-(\alpha_jq^{\frac12-\sigma})^X)^2| \le 4q^{X(\frac12-\sigma)}$. It remains to discuss the denominator:

\noindent
We have $1-\alpha_j^{-1}q^{\sigma-\frac12} = 1-\cos(2\pi\theta_j)q^{\sigma-\frac12}-i\sin(2\pi\theta_j)q^{\sigma-\frac12}$ and so 
\begin{align*}
|1-\alpha_j^{-1}q^{\sigma-\frac12}|& =\sqrt{(1-\cos(2\pi\theta_j)q^{\sigma-\frac12})^2+(\sin(2\pi\theta_j)q^{\sigma-\frac12})^2}\\
&=\sqrt{1+q^{2\sigma-1}-2\cos(2\pi\theta_j)q^{\sigma-\frac12}} 
\end{align*}
Since $\sigma\ge \frac12+\sigma_0$, we have 
\[\frac1{|1-\alpha_j^{-1}q^{\sigma-\frac12}|}\le \frac1{\sqrt{1+q^{2\sigma_0}-2q^{\sigma_0}}}\le \frac1{\sigma_0\log q},\]
where the last inequality follows from the Taylor expansion.
Thus we have 
\[\sum_{j=1}^{2\delta}\left|\frac{(\alpha_jq^{\frac12-\sigma})^X(1-(\alpha_jq^{\frac12-\sigma})^X)^2}{(1-\alpha_j^{-1}q^{\sigma-\frac12})^3}\right|\le \frac{4q^{X(\frac12-\sigma)}}{(\sigma_0\log q)^2}\sum_{j=1}^{2\delta}\Re\left(\frac1{1-\alpha_jq^{-\sigma_0}}\right).\]

\vspace{2mm}
\noindent
Finally, \eqref{log deriv version 4} gives us that 
\[\sum_{j=1}^{2\delta}\Re\left(\frac1{1-\alpha_jq^{-\sigma_0}}\right)=\frac1{\log q}\frac{L'}{L}\left(\tfrac12+\sigma_0,\chi_D\right)+2\delta -\frac{\lambda q^{-(\frac12+\sigma_0)}}{1-q^{-(\frac12+\sigma_0)}}.\]
Note that $\frac{\lambda q^{-(\frac12+\sigma_0)}}{1-q^{-(\frac12+\sigma_0)}}$ is a positive quantity and it is too small to effect the analysis so we drop it in the upperbound. 
\end{proof}

\section{A formula for $\log L(\sigma,\chi_D)$}\label{logformula}
\noindent
Throughout the rest of the paper, we fix the following notations:  the parameter $\delta$ as defined as in \eqref{delta}, the parameter $\sigma_0=\frac{c}{X}$, with $0<c<\frac1{2\log q}$.
Furthermore, we define 
 \begin{equation}
\widetilde{P}_X(\sigma, \chi_D):=\sum_{f\in \mathcal{M}_{\leq X}}\frac{\Lambda(f)\chi_D(f)}{d(f)|f|^{\sigma}}.
\end{equation}
 The purpose of this section is to prove the following proposition:
\begin{proposition}\label{logL-as-PX}
Let $X\ge1$. Then 
\begin{multline*}
\log L(1/2+\sigma_0,\chi_D)=\widetilde{P}_X(1/2+\sigma_0,\chi_D)
+O\left(\frac{1}{X^3}\sum_{f\in\mathcal{M}_{\leq 3X}}\frac{\Lambda_X(f)\chi_D(f)}{|f|^{\frac12+\sigma_0}}\right)\\+O\left(\frac{\delta}{X}+\frac{\lambda}{X(X+2)}\right).
\end{multline*}
\end{proposition}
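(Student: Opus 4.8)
Write $s_0=\tfrac12+\sigma_0$ and, for $1\le j\le 2\delta$, set $z_j=\alpha_j q^{-\sigma_0}=\alpha_j q^{1/2-s_0}$, so that $|z_j|=q^{-\sigma_0}<1$ by Weil's Riemann Hypothesis \eqref{Lstardef}. The plan is to pass from $\log L$ to a tail over the eigenphases $\theta_{j,D}$ and then control that tail by the weight $\Lambda_X$ and by Lemma~\ref{bnd-sum-zeros}. First I would record the exact decomposition coming from \eqref{L-function polynomial form}--\eqref{Lstardef}: the Hadamard-type identity $\log L(s_0,\chi_D)=\lambda\log(1-q^{-s_0})+\sum_{j=1}^{2\delta}\log(1-z_j)$ (valid since $\Re s_0>\tfrac12$), together with the relation $\sum_{f\in\mathcal M_m}\Lambda(f)\chi_D(f)=-\lambda-q^{m/2}\sum_{j=1}^{2\delta}\alpha_j^{m}$, which turns $\widetilde P_X(s_0,\chi_D)=\sum_{m\le X}\tfrac{q^{-ms_0}}{m}\sum_{f\in\mathcal M_m}\Lambda(f)\chi_D(f)$ into $-\lambda\sum_{m\le X}\tfrac{q^{-ms_0}}{m}-\sum_{j}\sum_{m\le X}\tfrac{z_j^{m}}{m}$. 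Subtracting, and using $\log(1-z)=-\sum_{m\ge1}z^m/m$, gives the exact identity
\[\log L(s_0,\chi_D)-\widetilde P_X(s_0,\chi_D)=-\lambda\sum_{m>X}\frac{q^{-ms_0}}{m}-\sum_{j=1}^{2\delta}\sum_{m>X}\frac{z_j^{m}}{m},\]
in which the $\lambda$-sum is $O(\lambda q^{-X/2})$, hence comfortably inside the claimed $O(\lambda/(X(X+2)))$; so it remains to treat the double tail $\sum_j\sum_{m>X}z_j^m/m$.

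Next I would insert the smoothing weight $W$ underlying $\Lambda_X$: $W(m)=1$ for $m\le X$, $W(m)=0$ for $m>3X$, and on $X<m\le 3X$ it is the piecewise-quadratic taper chosen so that $W(m)/m$ is comparable to the coefficient of $\Lambda(f)$ in $\Lambda_X(f)$ divided by $X^3$ — possible since that coefficient is $\asymp X^2$ for $m$ near $X$ and $O(1)$ for $m$ near $3X$ (in particular $\Lambda_X(f)=O(X^2 d(f))$, cf.\ the proof of Lemma~\ref{mean square estimate}) — and so that the ``smoothed minus sharp'' generating function telescopes. Recombining over $j$ via $\sum_j z_j^m=-q^{-ms_0}(\lambda+\sum_{f\in\mathcal M_m}\Lambda(f)\chi_D(f))$, the piece
\[E_1:=\sum_{X<m\le 3X}\frac{W(m)\,q^{-ms_0}}{m}\sum_{f\in\mathcal M_m}\Lambda(f)\chi_D(f)=\sum_{X<d(f)\le 3X}\frac{W(d(f))\,\Lambda(f)\chi_D(f)}{d(f)\,|f|^{s_0}}\]
is a prime sum of exactly the shape $O\!\left(\tfrac1{X^3}\sum_{f\in\mathcal M_{\le 3X}}\tfrac{\Lambda_X(f)\chi_D(f)}{|f|^{s_0}}\right)$, i.e.\ the first error term, and after subtracting $E_1$ the leftover $\lambda$-contribution is again $O(\lambda q^{-X/2})$.

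It then remains to estimate the residual sum over the zeros, $-\sum_{j}\big(\sum_{m>X}\tfrac{z_j^m}{m}-\sum_{X<m\le 3X}\tfrac{W(m)z_j^m}{m}\big)$. Summing the geometric-type series in $z_j$ and simplifying the partial fractions — here the choice of $W$ is engineered precisely so that the bulk cancels — collapses this to an $X^{-3}$-multiple of $\sum_{j}z_j^{X+3}(1-z_j^X)^2/(1-z_j)^3$, most conveniently after representing the whole quantity as $\int_{s_0}^{\infty}(\cdots)\,d\sigma$ (this is why Lemma~\ref{bnd-sum-zeros} is stated for all $\sigma\ge\tfrac12+\sigma_0$). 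Since $z_j=\alpha_j q^{1/2-\sigma}$ one has $\big|z_j^{X+3}(1-z_j^X)^2/(1-z_j)^3\big|=\big|(\alpha_j q^{1/2-\sigma})^{X}(1-(\alpha_j q^{1/2-\sigma})^{X})^2/(1-\alpha_j^{-1}q^{\sigma-1/2})^3\big|$, so Lemma~\ref{bnd-sum-zeros} applies. Inserting $\sigma_0=c/X$ (so $q^{X(1/2-s_0)}=q^{-c}=O(1)$ and $\sigma_0^{-2}=O(X^2)$), carrying out the $\sigma$-integration (which supplies $\int_{s_0}^{\infty}q^{X(1/2-\sigma)}\,d\sigma\ll X^{-1}$), and using the uniform bound
\[\frac{L'}{L}\!\Big(\tfrac12+\sigma_0,\chi_D\Big)+2\delta\log q=\log q\sum_{j=1}^{2\delta}\frac{1}{1-\alpha_j q^{-\sigma_0}}+O(1)\ \ll\ 1+\frac{\delta}{1-q^{-\sigma_0}}\ \ll\ \delta X\]
— which follows from \eqref{log deriv version 4} and $|1-\alpha_j q^{-\sigma_0}|\ge 1-q^{-\sigma_0}\gg\sigma_0$ — this residual is $O(\delta/X)$. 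Collecting $E_1$, the $\lambda$-terms and this estimate yields the proposition.

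The step I expect to be the main obstacle is the construction in the second paragraph: exhibiting the weight $W$ (equivalently the Perron-type kernel) that is simultaneously $\equiv1$ on $\{1,\dots,X\}$, produces exactly the $\Lambda_X$-weighted prime sum $E_1$ as its correction, and leaves behind a remainder of the precise algebraic shape $z^{X+3}(1-z^X)^2/(1-z)^3$ for which Lemma~\ref{bnd-sum-zeros} was built. Everything else — the Hadamard identity, the recombination over zeros, the bound $\Lambda_X(f)=O(X^2 d(f))$, and the bookkeeping of the powers of $X$ under $\sigma_0=c/X$ — is routine.
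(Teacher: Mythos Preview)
Your overall architecture --- Hadamard factorisation, isolate the tail $\sum_j\sum_{m>X}z_j^m/m$, insert a smoothing $W$ matching $\Lambda_X$, then bound the residual zero-sum by Lemma~\ref{bnd-sum-zeros} and integrate in $\sigma$ --- is exactly the paper's strategy in disguise. The paper packages the first two steps into a single Perron-type contour integral with kernel $q^{X(w-s)}(1-q^{X(w-s)})^2/(1-q^{-(w-s)})^3$: evaluating it via the Dirichlet series produces the $\Lambda_X$-weight automatically, while evaluating it via residues produces the zero-sum in the precise form $\sum_j z_j^X(1-z_j^X)^2/(1-z_j^{-1})^3$. That contour integral resolves exactly the obstacle you flagged.

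However, your bookkeeping in the last paragraph contains two errors that happen to cancel. First, the zero-sum remainder at the $L'/L$ level carries a prefactor $X^{-2}$, not $X^{-3}$ (this is immediate from \eqref{log derivative version 3}: the residue at $w=s$ is $-X^2(\log q)^{-1}L'/L$, so one \emph{divides} by $X^2$ when isolating the remainder). Second, your crude bound $L'/L(\tfrac12+\sigma_0,\chi_D)+2\delta\log q\ll \delta X$ is too weak by a factor $X$: with the correct $X^{-2}$ prefactor, Lemma~\ref{bnd-sum-zeros} together with $\sigma_0^{-2}\asymp X^2$ and $\int_{s_0}^\infty q^{X(1/2-\sigma)}\,d\sigma\ll X^{-1}$ gives a residual of size $X^{-1}\bigl(L'/L(\tfrac12+\sigma_0,\chi_D)+2\delta\log q\bigr)$, which under your bound is only $O(\delta)$, not $O(\delta/X)$. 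The missing ingredient is the paper's self-referential step: one evaluates \eqref{intermediate equation1} at $\sigma=\tfrac12+\sigma_0$ and \emph{solves} for $L'/L(\tfrac12+\sigma_0,\chi_D)$, obtaining \eqref{LprimeL-identity}, namely $L'/L(\tfrac12+\sigma_0,\chi_D)=O\bigl(X^{-2}\sum_{f\in\mathcal M_{\le 3X}}\Lambda_X(f)\chi_D(f)|f|^{-s_0}\bigr)+O(\delta)$. Feeding this back yields simultaneously the $O(\delta/X)$ term \emph{and} the $O\bigl(X^{-3}\sum\Lambda_X(f)\chi_D(f)|f|^{-s_0}\bigr)$ term --- they are not independent, as your write-up suggests, but both arise from this single feedback step.
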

\noindent
This proposition is useful as it allows us to compute the moments of $\log|L(\tfrac12+\sigma_0,\chi_D)|$ by studying the simpler expression $\widetilde{P}_X(\sigma, \chi_D)$. The proposition follows immediately from the following lemma after some straightforward manipulations.
\begin{lemma}\label{logLformula1} 
Let $X\ge 1$. Then we have
\begin{align*}
\log L\left(\frac12+\sigma_0,\chi_D\right)=\frac{1}{2X^2}\sum_{f\in\mathcal{M}_{\leq 3X}}\frac{\Lambda_X(f)\chi_D(f)}{d(f)|f|^{\frac12+\sigma_0}}+O\left(\frac{1}{X^3}\sum_{f\in\mathcal{M}_{\leq 3X}}\frac{\Lambda_X(f)\chi_D(f)}{|f|^{\frac12+\sigma_0}}\right)\\
+O\left(\frac{\delta}{X}+ \frac{\lambda}{X(X+2)}\right).
\end{align*}
\end{lemma}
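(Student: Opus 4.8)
The plan is to mimic over $\mathbb F_q[t]$ the mollified explicit‑formula identity of Radziwi{\l}{\l}--Soundararajan \cite{RadSoundSCL}. First I would record an exact formula for the left side: for $\sigma_0>0$ the Dirichlet series $\sum_{f\in\mathcal M}\Lambda(f)\chi_D(f)\,d(f)^{-1}|f|^{-1/2-\sigma_0}$ converges absolutely and equals $\log L(\tfrac12+\sigma_0,\chi_D)$. One way to see this is to integrate \eqref{log derivative Dirichlet} along the real half‑line from $+\infty$ down to $\tfrac12+\sigma_0$; this is legitimate because, by \eqref{Lstardef} and Weil's theorem \cite{WEIL}, all zeros of $\mathcal L^{*}(u,\chi_D)$ lie on $|u|=q^{-1/2}$, so $L(\sigma,\chi_D)$ is positive (and $\tfrac{L'}{L}(\sigma,\chi_D)$ finite) for every $\sigma>\tfrac12$. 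Feeding in the trace identity $\sum_{f\in\mathcal M_n}\Lambda(f)\chi_D(f)=-\lambda-q^{n/2}\sum_{j=1}^{2\delta}\alpha_j^{\,n}$ of Section~\ref{prelimsec} exhibits the convergence and gives $\log L(\tfrac12+\sigma_0,\chi_D)=-\lambda\sum_{n\ge1}\tfrac1{n\,q^{n(1/2+\sigma_0)}}-\sum_{j=1}^{2\delta}\sum_{n\ge1}\tfrac{(\alpha_j q^{-\sigma_0})^{n}}{n}$.

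Next I would unfold $\Lambda_X$. Writing $v_m:=\Lambda_X(f)/(2X^2\Lambda(f))$ for a prime power $f$ of degree $m$, equation \eqref{Lambda X def} says $v_m=1$ for $m\le X$, $v_m=0$ for $m>3X$, and for $X<m\le 3X$ the sequence $1-v_m$ is piecewise quadratic in $m$ with breakpoints $X,2X,3X$ (growing smoothly from $\asymp 1/X$ to $1$), whose third difference is supported within $O(1)$ of each breakpoint and is $\ll 1/X$ there. Because $\tfrac1{2X^2}\Lambda_X(f)=\Lambda(f)$ when $d(f)\le X$, grouping the main term of the lemma by degree and substituting the trace identity yields the exact decomposition
\[
\log L(\tfrac12+\sigma_0,\chi_D)-\frac1{2X^2}\sum_{f\in\mathcal M_{\le 3X}}\frac{\Lambda_X(f)\chi_D(f)}{d(f)|f|^{1/2+\sigma_0}}=-\lambda\sum_{n>X}\frac{1-v_n}{n\,q^{n(1/2+\sigma_0)}}-\sum_{j=1}^{2\delta}\sum_{n>X}\frac{(1-v_n)(\alpha_j q^{-\sigma_0})^{n}}{n},
\]
the terms with $n\le X$ having cancelled since $v_n=1$ there.

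It then remains to bound the two sums on the right. The first is harmless: $|1-v_n|\ll1$, $n>X$, and $q^{-n(1/2+\sigma_0)}\le q^{-n/2}$ force it to be $\ll\lambda q^{-X/2}/X\ll\lambda/(X(X+2))$. For the second, the plan is to sum by parts three times in $n$: the thrice‑iterated partial sums of $(\alpha_j q^{-\sigma_0})^{n}$ equal $(\alpha_j q^{-\sigma_0})^{n+3}(1-\alpha_j q^{-\sigma_0})^{-3}$, and the piecewise‑quadratic shape of $1-v_n$ then collapses each inner sum into a bounded linear combination of $\dfrac{(\alpha_j q^{-\sigma_0})^{X}\bigl(1-(\alpha_j q^{-\sigma_0})^{X}\bigr)^{2}}{\bigl(1-\alpha_j q^{-\sigma_0}\bigr)^{3}}$ — writing $z=\alpha_j q^{-\sigma_0}$, the breakpoints at $X,2X,3X$ produce exactly the numerator $z^X-2z^{2X}+z^{3X}=z^X(1-z^X)^2$ — together with pieces carrying an extra $n^{-1}\le X^{-1}$. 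Summing over the $2\delta$ zeros and noting $|1-\alpha_j q^{-\sigma_0}|=q^{\sigma_0}|1-\alpha_j^{-1}q^{\sigma_0}|\asymp|1-\alpha_j^{-1}q^{\sigma_0}|$, Lemma~\ref{bnd-sum-zeros} (taken at $\sigma=\tfrac12+\sigma_0$, so that $q^{X(1/2-\sigma)}=q^{-\sigma_0 X}=q^{-c}$) bounds the leading type by $\ll q^{-c}(\sigma_0\log q)^{-2}\bigl(\tfrac{L'}{L}(\tfrac12+\sigma_0,\chi_D)+2\delta\log q\bigr)$. Since $\sigma_0=c/X$ gives $(\sigma_0\log q)^{-2}\asymp X^{2}$, this — against the $(2X^2)^{-1}$ in the main term and the gained $n^{-1}\le X^{-1}$ — turns the $2\delta\log q$ contribution into $O(\delta/X)$, while the $\tfrac{L'}{L}(\tfrac12+\sigma_0,\chi_D)$ contribution is, by \eqref{log derivative Dirichlet} and the trace identity run backwards, of the shape $O\bigl(X^{-3}\sum_{f\in\mathcal M_{\le3X}}\Lambda_X(f)\chi_D(f)|f|^{-1/2-\sigma_0}\bigr)$. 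Assembling these bounds would give the lemma; Proposition~\ref{logL-as-PX} then follows immediately, since the $d(f)\le X$ part of the main term is precisely $\widetilde P_X(\tfrac12+\sigma_0,\chi_D)$, while its $X<d(f)\le 3X$ part has $d(f)^{-1}\le X^{-1}$ and is absorbed into the $O(X^{-3}\sum\cdots)$ error.

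The hard part is the manipulation in the previous paragraph: the summations by parts must be arranged so that the breakpoint data lands exactly on the profile $\dfrac{z^{X}(1-z^{X})^{2}}{(1-z^{-1})^{3}}$ demanded by Lemma~\ref{bnd-sum-zeros} — this is precisely what forces the breakpoints $X,2X,3X$ and the particular quadratics appearing in \eqref{Lambda X def} — and one must then check that every leftover term (the $\chi_D(P^{2})$‑type prime‑power contributions, bounded using $\Lambda_X(P^2)\le 4X^2 d(P)$; the remainders of the three partial summations; and the small ``integer‑rounding'' discrepancy between $\Lambda_X$ and an idealized $C^{1}$ quadratic‑spline weight) really does collapse into the two displayed error terms rather than into something of size $\delta$ or $\delta\log\delta$. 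Everything else is routine bookkeeping.
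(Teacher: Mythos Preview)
Your outline is in the right spirit and, at the level of ``decompose $\log L$ minus the mollified sum via the trace identity, then control the zero sum by the profile $\dfrac{z^X(1-z^X)^2}{(1-z^{-1})^3}$ and Lemma~\ref{bnd-sum-zeros}'', it runs parallel to the paper. But there is a genuine gap at the point where Lemma~\ref{bnd-sum-zeros} outputs a term $\tfrac{L'}{L}(\tfrac12+\sigma_0,\chi_D)$ and you assert this is ``of the shape $O\bigl(X^{-3}\sum_{f\in\mathcal M_{\le3X}}\Lambda_X(f)\chi_D(f)|f|^{-1/2-\sigma_0}\bigr)$'' by ``\eqref{log derivative Dirichlet} and the trace identity run backwards''. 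Running the trace identity in either direction only reproduces the zero expansion \eqref{log derivative zeros}; it does not bound $\tfrac{L'}{L}(\tfrac12+\sigma_0,\chi_D)$. The trivial pointwise bound from \eqref{log derivative zeros} is $|1-\alpha_j q^{-\sigma_0}|^{-1}\ll \sigma_0^{-1}\asymp X$, giving $\tfrac{L'}{L}(\tfrac12+\sigma_0,\chi_D)\ll \delta X$, and hence an error of size $O(\delta)$ rather than the required $O(\delta/X)$. So the argument does not close as written.

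The paper fixes exactly this point by a bootstrap that you have omitted. Rather than attacking $\log L$ directly, it first runs the identical explicit-formula computation for $-\tfrac{L'}{L}(\sigma,\chi_D)$ (i.e.\ your argument without the $1/n$), obtaining \eqref{L'/L-intermediate}: a truncated $\Lambda_X$--sum plus an error bounded via Lemma~\ref{bnd-sum-zeros} by $C\,q^{-X\sigma_0}\bigl(\tfrac{L'}{L}(\tfrac12+\sigma_0,\chi_D)+2\delta\log q\bigr)$. Evaluating at $\sigma=\tfrac12+\sigma_0$ makes this self-referential; since $q^{-X\sigma_0}=q^{-c}$ and $c$ is chosen so that the implicit coefficient is strictly less than $1$, one solves for $\tfrac{L'}{L}(\tfrac12+\sigma_0,\chi_D)$ and obtains \eqref{LprimeL-identity}. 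Only after this does the paper integrate in $\sigma$ to pass to $\log L$. This bootstrap, together with the constraint on $c$, is the substantive step you are missing; without it your final error collection cannot be justified.

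A secondary point: by working directly at the level of $\log L$ you carry the factor $1/n$ through the triple summation by parts, which forces you to split off ``pieces carrying an extra $n^{-1}$'' from the clean profile. The paper's route---compute the explicit formula for $\tfrac{L'}{L}$ via the contour integral $\mathcal I$ (where the weight $\Lambda_X$ drops out \emph{exactly} from \eqref{Integral weight}), bootstrap, then integrate---avoids this entirely and is both shorter and less error-prone. Your remark about ``$\chi_D(P^2)$--type prime-power contributions'' is also misplaced here: those enter only later, in passing from Lemma~\ref{logLformula1} to Proposition~\ref{logL-as-PX} and in the moment computation, not in the proof of the present lemma.
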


\vspace{2mm}
\noindent
We will make use of the following identity, let $X\ge 0$, $\Re(s)\ge 0$: 
\begin{align}\label{Integral weight}
\frac1{2\pi i}\int_{2}^{2+\frac{2\pi i}{\log q}}\frac{q^{X(w-s)}}{(1-q^{-(w-s)})^3}dw= -\frac{(X+1)(X+2)}{2\log q}. 
\end{align}

\vspace{2mm}
\begin{proof}[Proof of Lemma \ref{logLformula1}]

We begin by considering the following integral 
\begin{align*}
\mathcal{I}&=\frac1{2\pi i}\int_{2}^{2+\frac{2\pi i}{\log q}}\frac{q^{X(w-s)}(1-q^{X(w-s)})^2}{(1-q^{-(w-s)})^3}\frac{L'}{L}(w,\chi_D)dw  \\
&=\frac1{2\pi i}\int_{2}^{2+\frac{2\pi i}{\log q}}\frac{q^{X(w-s)}(1-2q^{X(w-s)}+q^{2X(w-s)})}{(1-q^{-(w-s)})^3}\frac{L'}{L}(w,\chi_D)dw. 
\end{align*}

\vspace{2mm}
\noindent
We compute this integral in two different ways. The first approach is to use \eqref{log derivative Dirichlet}, \eqref{Integral weight} and integrate the result term by term: 
\begin{align*}
\mathcal{I}= \frac12\sum_{f\in \mathcal{M}_{\leq X}}\frac{\Lambda(f)\chi_D(f)}{|f|^s}(X-d(f)+1)(X-d(f)+2) \\
- \sum_{f\in \mathcal{M}_{\leq 2X}}\frac{\Lambda(f)\chi_D(f)}{|f|^s}(2X-d(f)+1)(2X-d(f)+2) \\
+\frac12\sum_{f\in \mathcal{M}_{\leq 3X}}\frac{\Lambda(f)\chi_D(f)}{|f|^s}(3X-d(f)+1)(3X-d(f)+2). 
\end{align*}

\vspace{2mm}
\noindent
We combine these into a single sum recognizing the weight $\Lambda_X(f)$ defined in \eqref{Lambda X def}, so that 
\begin{equation}\label{I version 1}
\mathcal{I}=\frac12\sum_{f \in \mathcal{M}_{\leq 3X}}\frac{\Lambda_X(f)\chi_D(f)}{|f|^s}.
\end{equation}

\vspace{2mm}
\noindent
The second approach is to use the residue theorem, from this method, we see that there is a pole of order 3 when $w=s$ and simple poles for each zero of $\frac{L'}{L}(w,\chi_D)$. So,
\begin{equation}\label{I version 2}
\mathcal{I}=-\frac{X^2}{\log q}\frac{L'}{L}(s,\chi_D) -\log q\bigg(\frac{\lambda q^{-(X+2)s}}{1-q^{-s}}+\sum_{j=1}^{2\delta}\frac{(q^{\frac12-s}\alpha_j)^X(1-(q^{\frac12-s}\alpha_j)^X)^2}{(1-q^{s-\frac12}\alpha_j^{-1})^3}\bigg).
\end{equation}

\vspace{2mm}
\noindent
Equating \eqref{I version 1} and \eqref{I version 2}, we have 

 \begin{multline*}
 \frac12\sum_{f \in \mathcal{M}_{\leq 3X }}\frac{\Lambda_X(f)\chi_D(f)}{|f|^s}\\=-\frac{X^2}{\log q}\frac{L'}{L}(s,\chi_D) -\log q\bigg(\frac{\lambda q^{-(X+2)s}}{1-q^{-s}}+\sum_{j=1}^{2\delta}\frac{(q^{\frac12-s}\alpha_j)^X(1-(q^{\frac12-s}\alpha_j)^X)^2}{(1-q^{s-\frac12}\alpha_j^{-1})^3}\bigg),
 \end{multline*}
which implies 

\begin{align}\label{log derivative version 3}
 -\frac{L'}L(s,\chi_D)=& \frac{\log q}{2X^2}\sum_{f \in \mathcal{M}_{\leq 3X}}\frac{\Lambda_X(f)\chi_D(f)}{|f|^{s}}\\ 
& \nonumber +\frac{(\log q)^2}{2X^2}\bigg(\frac{\lambda q^{-(X+2)s}}{1-q^{-s}}+\sum_{j=1}^{2\delta}\frac{(q^{\frac12-s}\alpha_j)^X(1-(q^{\frac12-s}\alpha_j)^X)^2}{(1-q^{s-\frac12}\alpha_j^{-1})^3}\bigg).
\end{align}
We note that this implies, for $\Re(s)=\sigma\ge \frac12$, we have 
\begin{align}\label{L'/L-intermediate}
-\frac{L'}L(\sigma,\chi_D)&= \frac{\log q}{2X^2}\sum_{f \in \mathcal{M}_{\leq 3X}}\frac{\Lambda_X(f)\chi_D(f)}{|f|^{\sigma}}\\
&\nonumber +\frac{(\log q)^2}{2X^2}\bigg(\frac{\lambda q^{-(X+2)\sigma}}{1-q^{-\sigma}}+\sum_{j=1}^{2\delta}\frac{(\alpha_jq^{\frac12-\sigma})^X(1-(\alpha_jq^{\frac12-\sigma})^X)^2}{(1-\alpha_j^{-1}q^{\sigma-\frac12})^3}\bigg).
\end{align}

\noindent
Now, it remains to provide estimates for the error terms.  We handle the sum over the zeros in \eqref{L'/L-intermediate} by applying Lemma \ref{bnd-sum-zeros}, which gives 
\[\frac{(\log q)^2}{X^2}\sum_{j=1}^{2\delta}\left|\frac{(\alpha_jq^{\frac12-\sigma})^X(1-(\alpha_jq^{\frac12-\sigma})^X)^2}{(1-\alpha_j^{-1}q^{\sigma-\frac12})^3}\right| \le \frac{4 q^{X(\frac12-\sigma)}}{X^2\sigma_0^2\log q}\left(\frac{L'}{L}\left(\frac12+\sigma_0,\chi_D\right)+2\delta \log q\right).\]

Using the fact that  $\sigma_0=\frac{c}{X}$, for $c$ some positive constant, we have 
\[\frac{(\log q)^2}{X^2}\sum_{j=1}^{2\delta}\left|\frac{(\alpha_jq^{\frac12-\sigma})^X(1-(\alpha_jq^{\frac12-\sigma})^X)^2}{(1-\alpha_j^{-1}q^{\sigma-\frac12})^3}\right| \le \frac{4q^{X(\frac12-\sigma)}}{c^2\log q}\left(\frac{L'}{L}\left(\frac12+\sigma_0,\chi_D\right)+2\delta\log q\right).\]

\vspace{2mm}
\noindent
Thus, we can find some $v$ such that $|v|\le 1$ such that
\begin{align}\label{intermediate equation1}
-\frac{L'}L(\sigma,\chi_D)&= \frac{\log q}{2X^2}\sum_{f \in \mathcal{M}_{\leq 3X}}\frac{\Lambda_X(f)\chi_D(f)}{|f|^{\sigma}} \\
\nonumber &+\frac{4\nu q^{X(\frac12-\sigma)}}{c^2\log q}\frac{L'}{L}\left(\frac12+\sigma_0,\chi_D\right)
+O\left(\delta q^{X(\frac12-\sigma)}+ \frac{\lambda q^{-(X+2)\sigma}}{X}\right).
\end{align}

\vspace{2mm}
\noindent
So, evaluating \eqref{intermediate equation1} at $\sigma=\frac12+\sigma_0$, we conclude 
\begin{align*}\left(1+\frac{4\nu q^{-X\sigma_0}}{c^2\log q}\right)\frac{L'}{L}\left(\frac12+\sigma_0,\chi_D\right)= \frac{\log q}{2X^2}\sum_{f \in \mathcal{M}_{\leq 3X}}\frac{\Lambda_X(f)\chi_D(f)}{|f|^{\frac12+\sigma_0}}\\
 +O\left(\delta q^{-X\sigma_0}+ \frac{\lambda q^{-(X+2)(1/2+ \sigma_0)}}{X}\right).
\end{align*}
Additionally, we choose $c$ such that $c^2q^c>\frac{4}{\log q}$, which implies $\left(1+\frac{4\nu q^{-X\sigma_0}}{c^2\log q}\right)>1-\frac4{c^2q^c\log q}>C$, for some $C>0$. Note that this is a valid choice for $c$ since we assumed $c< \frac1{2\log q}$. Thus, we obtain
\begin{equation}\label{LprimeL-identity}
\frac{L'}L\left(\frac12+\sigma_0,\chi_D\right)=O\left(\frac1{X^2}\sum_{f \in \mathcal{M}_{\leq 3X}}\frac{\Lambda_X(f)\chi_D(f)}{|f|^{\frac12+\sigma_0}}\right)+O(\delta).
\end{equation}
Finally, putting together \eqref{LprimeL-identity} and \eqref{intermediate equation1}, we find that
\begin{align*}
-\frac{L'}L(\sigma,\chi_D)&= \frac{\log q}{2X^2}\sum_{f \in \mathcal{M}_{\leq 3X}}\frac{\Lambda_X(f)\chi_D(f)}{|f|^{\sigma}}\\
&+O\left(\frac{q^{X(\frac12-\sigma)}}{X^2}\sum_{f \in \mathcal{M}_{\leq 3X}}\frac{\Lambda_X(f)\chi_D(f)}{|f|^{\frac12+\sigma_0}} \right)
+O\left(\delta q^{X(\frac12-\sigma)}+ \frac{\lambda q^{-(X+2)\sigma}}{X}\right).
\end{align*}
Integrating with respect to $\sigma$ from $\frac12+\sigma_0$ to $\infty$, we conclude that 
\begin{align*}
\log L\left(\frac12+\sigma_0,\chi_D\right)&=\frac{1}{2X^2}\sum_{f \in \mathcal{M}_{\leq 3X}}\frac{\Lambda_X(f)\chi_D(f)}{d(f)|f|^{\frac12+\sigma_0}} \\
&+O\left(\frac1{X^3}\sum_{f \in \mathcal{M}_{\leq 3X}}\frac{\Lambda_X(f)\chi_D(f)}{|f|^{\frac12+\sigma_0}}\right)
+O\left(\frac{\delta}{X}+\frac{\lambda}{X(X+2)}\right).
\end{align*}
\end{proof}
\vspace{2mm}
\noindent
Now, we can complete the proof of Proposition \ref{logL-as-PX}:
\begin{proof}
Simply apply Lemma \ref{logLformula1}: 
\begin{align*}
\log L\left(\frac12+\sigma_0,\chi_D\right)&=\frac{1}{2X^2}\sum_{f\in\mathcal{M}_{\leq 3X}}\frac{\Lambda_X(f)\chi_D(f)}{d(f)|f|^{\frac12+\sigma_0}} \\ 
&+O\left(\frac{1}{X^3}\sum_{f\in\mathcal{M}_{\leq 3X}}\frac{\Lambda_X(f)\chi_D(f)}{|f|^{\frac12+\sigma_0}}\right)+O\left(\frac{\delta}{X}+\frac{\lambda}{X(X+2)}\right)\\
&=\sum_{f\in\mathcal{M}_{\le X}}\frac{\Lambda(f)\chi_D(f)}{d(f)|f|^{\frac12+\sigma_0}}+\frac{1}{2X^2}\sum_{X<d(f)\le 3X}\frac{\Lambda_X(f)\chi_D(f)}{d(f)|f|^{\frac12+\sigma_0}} \\
&+O\left(\frac{1}{X^3}\sum_{f\in\mathcal{M}_{\leq 3X}}\frac{\Lambda_X(f)\chi_D(f)}{|f|^{\frac12+\sigma_0}}\right)+O\left(\frac{\delta}{X}+\frac{\lambda}{X(X+2)}\right)\\
&=\widetilde{P}_X(1/2+\sigma_0,\chi_D)+\frac{1}{2X^2}\sum_{X<d(f)\le 3X}\frac{\Lambda_X(f)\chi_D(f)}{d(f)|f|^{\frac12+\sigma_0}} \\
&+O\left(\frac{1}{X^3}\sum_{f\in\mathcal{M}_{\leq 3X}}\frac{\Lambda_X(f)\chi_D(f)}{|f|^{\frac12+\sigma_0}}\right)+O\left(\frac{\delta}{X}+\frac{\lambda}{X(X+2)}\right).
\end{align*}
\noindent
We conclude by noting that for $X<d(f)\le 3X$ we have $\frac{X}{d(f)}<1$, thus
\[\frac{1}{2X^2}\sum_{X<d(f)\le 3X}\frac{\Lambda_X(f)\chi_D(f)}{d(f)|f|^{\frac12+\sigma_0}}\le \frac{1}{2X^3}\sum_{f\in\mathcal{M}_{\leq 3X}}\frac{\Lambda_X(f)\chi_D(f)}{|f|^{\frac12+\sigma_0}}.\]
\end{proof}
	\section{Moment Estimation and Proof of the unconditional results}\label{thm1.1proof}
Recall that:  the parameter $\sigma_0=\frac{c}{X}$, with $0<c<\frac1{2\log q}$.

	
	\begin{proposition}\label{main proposition}
	Let $k$  be a natural number such that  $k\ll \left(\frac{\log n}{(\log \log n)^2}\right)^{\frac{1}{3}}$ and  $1\le X\le \frac{n}{2k}$. Then, as $D$ varies in $\mathcal{H}_n$, the distribution of $\widetilde{P}_{X}\left(\frac{1}{2}+\sigma_o, \chi_D\right)$ is approximately normal with mean $\frac12 \log n$ and variance $\log n$.
	\end{proposition}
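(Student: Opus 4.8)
The plan is to establish convergence to the Gaussian via the method of moments, computing the $k$-th moments of $\widetilde{P}_X\left(\tfrac12+\sigma_0,\chi_D\right)$ averaged over $D\in\mathcal{H}_n$ and matching them against the moments of $N\left(\tfrac12\log n,\log n\right)$. First I would split $\widetilde{P}_X$ into its prime contribution and its higher prime-power contribution: writing $\widetilde{P}_X(\tfrac12+\sigma_0,\chi_D)=\sum_{d(P)\le X}\frac{\chi_D(P)}{d(P)|P|^{1/2+\sigma_0}}+\sum_{j\ge2}\sum_{d(P)\le X/j}\frac{\chi_D(P^j)}{j\,d(P)|P|^{j(1/2+\sigma_0)}}$, the $j=2$ term is the source of the mean. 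Since $\chi_D(P^2)=1$ for $P\nmid D$, Lemma \ref{square term evaluation} shows $\frac12\sum_{d(P)\le X/2}\frac{1}{d(P)|P|^{1+2\sigma_0}}$ is, by Lemma \ref{upper bound for truncated sum over primes } (with $K=X/2$), equal to $\tfrac12\log(X/2)+O(1)$; one then relates $\log X$ to $\log n$ using the constraint $X\asymp n/k$ together with the smallness condition on $k$, so that $\tfrac12\log X=\tfrac12\log n+o(\sqrt{\log n})$. The $j\ge3$ terms and the error from $P\mid D$ are $O(1)$, negligible on the scale $\sqrt{\log n}$.

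Next I would show the odd-prime-power pieces contribute nothing to the limiting mean but all of the variance. The key input is Lemma \ref{upper bound for moment} (Florea's bound): for the centered linear form $Y_D:=\sum_{d(P)\le X}\frac{\chi_D(P)}{d(P)|P|^{1/2+\sigma_0}}$ one has, for $2kX\le n$,
\[
\frac1{|\mathcal{H}_n|}\sum_{D\in\mathcal{H}_n}|Y_D|^{2k}\ll \frac{(2k)!}{k!\,2^k}\left(\sum_{d(P)\le X}\frac{1}{d(P)^2|P|^{1+2\sigma_0}}\right)^{k}.
\]
Here the inner sum is, by partial summation from \eqref{prime poly th}, $\sum_{m\le X}\frac{1}{m^2 q^{2m\sigma_0}}\cdot\frac{q^m}{m}\cdot\frac{1}{q^m}$... wait — the correct accounting is that $\sum_{d(P)\le X}\frac{1}{d(P)|P|^{1/2+\sigma_0}}$ is NOT the right object for the variance; rather the variance comes from $\sum_{d(P)\le X}\frac{\log q\cdot(\text{something})}{|P|}$-type sums. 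Let me restate: the correct linear form whose variance is $\log n$ is $\widetilde{P}_X$ itself, and $\mathbb{E}|\widetilde{P}_X - \tfrac12\log n|^2 = \sum_{d(P)\le X}\frac{1}{d(P)^2|P|^{1+2\sigma_0}}\big(1+o(1)\big)+\cdots$; this sum is $\sum_{m\le X}\frac{1}{m\,q^{2m\sigma_0}}+O(1) = \log X+O(1)$ by Lemma \ref{upper bound for truncated sum over primes }, which is $\log n+o(\sqrt{\log n})$. So I would compute the exact second moment by orthogonality (Lemma \ref{square term evaluation} for the diagonal $P_1=P_2$, and Lemma \ref{polya-vinogradov inequality} to bound the off-diagonal $P_1P_2\ne\square$ terms by $O(q^{-g}\cdot\text{poly})=o(1)$), obtaining variance $\log n + o(\sqrt{\log n})$. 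For the higher moments, expand $(\widetilde{P}_X-\text{mean})^k$ as a sum over $k$-tuples of prime powers; by Lemma \ref{polya-vinogradov inequality} every tuple whose product is a non-square contributes $O(q^{-g+\varepsilon\cdot})$ which, since $k$ is small enough that the total number of tuples is $\ll q^{o(n)}$, sums to $o(1)$; the square tuples are handled by Lemma \ref{square term evaluation}, and counting the pairings gives exactly the Gaussian combinatorial factor $\frac{k!}{(k/2)!2^{k/2}}(\log n)^{k/2}(1+o(1))$ for even $k$ and $o\big((\log n)^{k/2}\big)$ for odd $k$. Dividing by $(\log n)^{k/2}$ yields the moments of $N(0,1)$ after centering and scaling.

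The main obstacle is controlling the range of validity: one needs $2kX\le n$ in Lemma \ref{upper bound for moment} and $3kX\le n$ implicitly for the Pólya–Vinogradov step to beat the number of tuples, while simultaneously needing $X\to\infty$ fast enough that $\tfrac12\log X = \tfrac12\log n + o(\sqrt{\log n})$ and that the error terms $O(\delta/X)=O(n/X)$ from Proposition \ref{logL-as-PX} (used later, not here) stay controlled. This forces the coupling $X\asymp n/k$ with $k$ as large as $(\log n/(\log\log n)^2)^{1/3}$, and one must check that with this choice the off-diagonal contributions, which carry a factor like $q^{-g}|l_1|^\varepsilon$ summed over $\ll (3X)^k\cdot q^{\varepsilon k X}$ tuples with $|l_1|\le q^{3kX}$, genuinely vanish — i.e. that $3kX(\tfrac12+\varepsilon) < g = \delta$, which is exactly why $X\le n/(2k)$ (a little room to spare) is imposed. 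A secondary technical point is verifying uniformity of the $o(1)$'s as $k$ grows with $n$; this is where the precise shape of the bound $k\ll(\log n/(\log\log n)^2)^{1/3}$ enters, guaranteeing $\big(\tfrac{18k}{e}\big)^{k/2}$-type error terms from Lemma \ref{mean square estimate} and the factorial mismatch in odd moments are both $o\big((\log n)^{k/2}\big)$. Once the moments are matched for all fixed $k$ (and with the stated uniformity), the Fréchet–Shohat theorem — the standard method-of-moments conclusion for the normal distribution — gives convergence in distribution, completing the proof.
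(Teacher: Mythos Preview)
Your approach is essentially the same as the paper's: decompose $\widetilde{P}_X$ into the prime sum $P_X(\tfrac12+\sigma_0,\chi_D)=\sum_{d(P)\le X}\chi_D(P)|P|^{-1/2-\sigma_0}$ plus the prime-square contribution $\tfrac12\log X+O(\log\log n)$, then compute the moments of $P_X$ by expanding the $k$-th power and separating square from non-square tuples via Lemmas~\ref{square term evaluation} and~\ref{polya-vinogradov inequality}, obtaining the Gaussian factor $\tfrac{k!}{(k/2)!\,2^{k/2}}(\log X)^{k/2}$ for even $k$ and negligible odd moments. One bookkeeping slip to fix: since $\Lambda(P)/d(P)=1$, the prime part of $\widetilde{P}_X$ carries \emph{no} $1/d(P)$ weight, so the variance is $\sum_{d(P)\le X}|P|^{-1-2\sigma_0}=\log X+O(1)$ directly --- your intermediate expression with $1/d(P)^2$ is inconsistent, though you land on the correct answer.
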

	
	\noindent
	To establish  Proposition \ref{main proposition}, we need to calculate the moments of $\widetilde{P}_X\left(\frac{1}{2}+\sigma_o, \chi_D\right)-\frac{1}{2}\log X$.
		
	\vspace{2mm}
	\noindent
	We start with the moments of the following sum over irreducible polynomials:  
	\[
	P_X\left(\frac{1}{2}+\sigma_o, \chi_D\right)=\sum_{d(P)\leq X}\frac{\chi_D(P)}{|P|^{\frac{1}{2}+\sigma_o}}.
	\]
	
	\begin{lemma}\label{moment estimations over irreducibles}
Assume that  $1\leq X\le \frac{n}{2k}$. Uniformly for all  odd natural numbers  $k\leq {\frac{n}{2}}$ and for every $\epsilon>0$, 
	\begin{align*}
	\frac{1}{|\mathcal{H}_n|}\sum_{D\in \mathcal{H}_n}\left(P_X\left(\frac{1}{2}+\sigma_o, \chi_D\right)\right)^k\ll_{\epsilon} \frac{q^{\frac{n}{2}+\left(\frac{1}{2}+\epsilon-\sigma_o\right)Xk}}{X^k},
	\end{align*}
while, uniformly for all even natural numbers  $k\ll (\log n)^{\frac{1}{3}}$, 
	\begin{align*}
	\frac{1}{|\mathcal{H}_n|}\sum_{D\in \mathcal{H}_n}\left(P_X\left(\frac{1}{2}+\sigma_o, \chi_D\right) \right)^k=\frac{k!}{(k/2)! 2^{\frac{k}{2}}}(\log X)^{\frac{k}{2}}
\left(1+O\left(\frac{k^3}{\log X}\right)\right).
	\end{align*}
\end{lemma}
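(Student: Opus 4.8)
The strategy is the standard ``moments'' route: expand the $k$-th power of $P_X(\tfrac12+\sigma_o,\chi_D)$ into a multiple sum over irreducibles, swap the order of summation so that the character sum over $D\in\mathcal{H}_n$ is on the inside, and then evaluate that inner sum using the tools already collected in Section~\ref{Lemmata}. Writing
\[
\sum_{D\in\mathcal{H}_n}\Bigl(P_X(\tfrac12+\sigma_o,\chi_D)\Bigr)^k
=\sum_{d(P_1),\dots,d(P_k)\le X}\frac{1}{|P_1\cdots P_k|^{\frac12+\sigma_o}}\sum_{D\in\mathcal{H}_n}\chi_D(P_1\cdots P_k),
\]
the inner sum is controlled by whether $P_1\cdots P_k$ is a perfect square. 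This dichotomy is exactly the split used in the proof of Lemma~\ref{mean square estimate}, so the two regimes below mirror that argument.

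\textbf{The non-square (``odd $k$'') bound.} When $P_1\cdots P_k$ is not a square we invoke the P\'olya--Vinogradov inequality, Lemma~\ref{polya-vinogradov inequality}, to get $\bigl|\sum_{D}\chi_D(l)\bigr|\ll_\epsilon q^{g}|l|^{\epsilon}$ where $l=P_1\cdots P_k$. Since $d(l)\le Xk$ we have $|l|^{\epsilon}\le q^{\epsilon Xk}$, and then
\[
\frac{1}{|\mathcal{H}_n|}\sum_{D\in\mathcal{H}_n}\Bigl(P_X\Bigr)^k
\ll_\epsilon \frac{q^{g+\epsilon Xk}}{q^{n-1}(q-1)}\Biggl(\sum_{d(P)\le X}\frac{1}{|P|^{\frac12+\sigma_o}}\Biggr)^{k}.
\]
Bounding the sum over primes by the prime polynomial theorem gives $\sum_{d(P)\le X}|P|^{-\frac12-\sigma_o}\ll q^{X(\frac12-\sigma_o)}/X$ (the top-degree term dominates), and using $q^{g}/q^{n}\ll q^{-n/2}$ yields the claimed bound $q^{n/2+(\frac12+\epsilon-\sigma_o)Xk}/X^k$ after relabelling $\epsilon$. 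For odd $k$ \emph{every} product $P_1\cdots P_k$ of the relevant shape has at least one unpaired prime, so this is the only contribution; for even $k$ it is an error term, and the constraint $X\le n/(2k)$ is precisely what makes $q^{\epsilon Xk}$ a harmless factor against the $q^{-n/2}$ savings.

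\textbf{The square (``even $k$'') main term.} For even $k$ the dominant contribution comes from $P_1\cdots P_k=\square$, i.e.\ from perfectly pairing the $k$ indices into $k/2$ pairs with equal primes. There are $\tfrac{k!}{(k/2)!2^{k/2}}$ such pairings, and applying Lemma~\ref{square term evaluation} (whose main term is $\prod_{P\mid f}(1+|P|^{-1})^{-1}$, a quantity between $0$ and $1$ and very close to $1$ on average) converts the diagonal into
\[
\frac{k!}{(k/2)!\,2^{k/2}}\Biggl(\sum_{d(P)\le X}\frac{1}{|P|^{1+2\sigma_o}}\Biggr)^{k/2}\bigl(1+\text{small}\bigr),
\]
and Lemma~\ref{upper bound for truncated sum over primes } evaluates $\sum_{d(P)\le X}|P|^{-1-2\sigma_o}=\log X+O(1)$ (recall $\sigma_0=c/X$ with $c$ small, so $X\sigma_o=c$ is bounded and the hypothesis of that lemma holds). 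The error terms to track are: (a) the $O(q^{-n-1})$ tail in Lemma~\ref{square term evaluation}, which is negligible since $X\le n/(2k)$; (b) the $(1+|P|^{-1})^{-1}$ factors, which cost a multiplicative $1+O(1/q^{\text{min deg}})$ per pair and hence $1+O(k/q)$ overall, absorbed into $1+O(k^3/\log X)$; (c) ``near-diagonal'' terms where the primes pair up but with multiplicities higher than $2$ (e.g.\ one prime appearing $4$ times): these are counted by a combinatorial factor times a sum like $\sum_{d(P)\le X}d(P)^{?}/|P|^{2}$, which is $O(1)$, so relative to the main term $(\log X)^{k/2}$ each such degeneracy costs a factor $O(1/\log X)$ and the number of ways to create one is $O(k^2)$, combining (with the $k$ from choosing which prime) to the stated $O(k^3/\log X)$. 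Finally the non-square part, bounded above by the odd-$k$ estimate with $k$ even, is $\ll q^{n/2+(\frac12+\epsilon-\sigma_o)Xk}/X^k$; dividing by $|\mathcal{H}_n|\asymp q^n$ and using $\sigma_o X=c$ shows this is $\ll q^{-n/2+\epsilon Xk}/X^k$, which under $Xk\le n/2$ is $\ll q^{-n/4}/X^k$, superpolynomially smaller than $(\log X)^{k/2}$ for the permitted range $k\ll(\log n)^{1/3}$.

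\textbf{Main obstacle.} The routine parts are the two prime-sum evaluations; the delicate bookkeeping is in step (c) above—controlling the contribution of ``fat'' pairings (primes appearing with multiplicity $>2$, or several coincidences among the $k$ indices) and showing their total is only $O(k^3/\log X)$ times the main term uniformly for $k$ as large as $(\log n)^{1/3}$. This requires a careful count of set partitions of $\{1,\dots,k\}$ refining the perfect matchings, weighting each block of size $2m$ by $\sum_{d(P)\le X}\Lambda$-free powers $|P|^{-m(1+2\sigma_o)}\ll_{m}1$ for $m\ge 2$, and checking the resulting combinatorial series in $k$ is dominated by the $O(k^2)$ ``one defect'' terms—this is where the hypothesis $k\ll(\log n/(\log\log n)^2)^{1/3}$ ultimately enters (via $k^3/\log X\to 0$ once $X$ is taken comparable to $n/(2k)$, so $\log X\asymp\log n$).
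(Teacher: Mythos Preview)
Your proposal is correct and follows essentially the same approach as the paper: expand the $k$-th power, swap summations, split according to whether $P_1\cdots P_k$ is a square, apply Lemma~\ref{polya-vinogradov inequality} in the non-square case and Lemma~\ref{square term evaluation} together with Lemma~\ref{upper bound for truncated sum over primes } in the square case, and bound the ``fat'' pairings (fewer than $k/2$ distinct primes) combinatorially to obtain the $O(k^3/\log X)$ error. The paper's organization differs only cosmetically (it uses Case/Subcase labels and phrases the degenerate-pairing estimate as a sum over $r<k/2$ distinct primes rather than in set-partition language), and your explicit remark about the $(1+|P|^{-1})^{-1}$ weight is a point the paper absorbs implicitly when invoking Lemma~\ref{upper bound for truncated sum over primes }.
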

\begin{remark}
In the case of function fields, the contribution of the odd moments is negligibly small even for $k$ large due to the fact that Lindel{\"o}f Hypothesis is known. For example, if we take $X=\frac{n}{2k}$ then the odd moments contribute $o(1)$.
\end{remark}

\begin{proof}
Expanding $k$-th power and interchanging summations, we have
\[
s_{n}=\sum_{\substack{d(P_i)\leq X\\ i=1, \ldots, k}}\frac{1}{|P_1\ldots P_k|^{\frac{1}{2}+\sigma_o}}\sum_{D\in \mathcal{H}_{n}}\chi_D(P_1\ldots P_k).
\]
\begin{case}
Let $k$ be even and $P_1\ldots P_k=\square$.
\begin{subcase}
Suppose there are exactly $\frac{k}{2}$ distinct primes say $Q_1, \ldots, Q_{\frac{k}{2}}$ with
\[
d(Q_1)\leq d(Q_2)\leq \ldots \leq d(Q_{k/2}).
\]
Using Lemma \ref{square term evaluation}, we obtain 
\begin{align*}
&s_{n}= \frac{k!}{(k/2)! 2^{\frac{k}{2}}}\sum_{\substack{d(Q_i)\leq X\\ i=1, \ldots, k/2}}\frac{1}{|Q_1\ldots Q_{k/2}|^{1+2\sigma_o}}\sum_{D\in \mathcal{H}_{n}}\chi_D\left((Q_1\ldots Q_{k/2})^2\right)\\
&=|\mathcal{H}_{n}|\times \frac{k!}{(k/2)! 2^{\frac{k}{2}}}\sum_{\substack{d(Q_i)\leq X\\ i=1, \ldots, k/2}}\prod_{i=1}^{k/2}\frac{1}{(|Q_i|+1)|Q_i|^{2\sigma_o}}
+ O\bigg(|\mathcal{H}_n|^{-1}\frac{k!}{(k/2)! 2^{\frac{k}{2}}} \sum_{\substack{d(Q_i)\leq X\\ i=1, \ldots, k/2}}\frac{1}{|Q_1\ldots Q_{k/2}|}\bigg)\\
&=|\mathcal{H}_{n}|\times \frac{k!}{(k/2)! 2^{\frac{k}{2}}}\sum_{\substack{d(Q_i)\leq X\\ i=1, \ldots, k/2}}\prod_{i=1}^{k/2}\frac{1}{(|Q_i|+1)|Q_i|^{2\sigma_o}}+O\left(|\mathcal{H}_n|^{-1} \frac{k!}{(k/2)! 2^{\frac{k}{2}}}(\log X)^{\frac{k}{2}}\right).
\end{align*}
Now, if we fix $Q_1, \ldots, Q_{\frac{k}{2}-1}$ and vary over $Q_{k/2}$ in the main term of the above expression. Then, the sum becomes 
\[
\sum_{d(Q_{k/2})\leq X}\frac{1}{(|Q_{k/2}|+1)|Q_{k/2}|^{2\sigma_o}}+O\left(\frac{k}{2}\right).
\]

\vspace{2mm}
\noindent
Applying Lemma \ref{upper bound for truncated sum over primes }, we find that
\begin{align*}
s_{n}&=|\mathcal{H}_{n}|\times \frac{k!}{(k/2)! 2^{\frac{k}{2}}}\left(\log X+O(k)\right)^{\frac{k}{2}}\\
&=|\mathcal{H}_{n}|\times \frac{k!}{(k/2)! 2^{\frac{k}{2}}}(\log X)^{\frac{k}{2}}\left(1+O\left(\frac{k}{\log X}\right)\right)^{\frac{k}{2}}.
\end{align*}
\end{subcase}

\begin{subcase}
Let there are $Q_1, \ldots, Q_r$ distinct primes with 
\[
d(Q_1)\leq \ldots \leq d(Q_k)\]
 where $r<\frac{k}{2}$. Therefore, Lemma \ref{upper bound for truncated sum over primes } gives us  
\begin{align*}
s_n\leq & |\mathcal{H}_{n}|\times \sum_{r<\frac{k}{2}}\frac{k!}{r!2^r}\binom{k-r}{r}\bigg(\sum_{d(P)\leq X}\frac{1}{(|P|+1)|P|^{2\sigma_o}}\bigg)^r\\
&+O\left(|\mathcal{H}_n|^{-1}\sum_{r<\frac{k}{2}}\frac{k!}{r!2^r}\binom{k-r}{r}(\log X)^r\right)\\
& \leq |\mathcal{H}_{n}|\times \sum_{r<\frac{k}{2}}\frac{k!}{r!2^r}\binom{k-r}{r}\left(\log X+O(1)\right)^r+O\left(|\mathcal{H}_n|^{-1}\sum_{r<\frac{k}{2}}\frac{k!}{r!2^r}\binom{k-r}{r}(\log X)^r\right)\\
&\ll |\mathcal{H}_{n}|\times \frac{k^3 k!}{(k/2)!2^{k/2}\log X}(\log X)^{\frac{k}{2}}+O\left(|\mathcal{H}_n|^{-1}\frac{k^3 k!}{(k/2)!2^{k/2}\log X}(\log X)^{\frac{k}{2}}\right).
\end{align*}
\end{subcase}
\end{case}

\begin{case}\label{even and nonsquare case}
Let $k$ be even and $P_1\ldots P_k\neq \square$. 
Then we use Lemma \ref{polya-vinogradov inequality} to get
\[
s_{n}\ll \sqrt{|\mathcal{H}_n|} \sum_{\substack{d(P_i)\leq X\\ i=1, \ldots, k}}\frac{1}{|P_1\ldots P_k|^{\frac{1}{2}+\sigma_o-\epsilon}}\ll  \frac{q^{\frac{n}{2}+\left(\frac{1}{2}+\epsilon-\sigma_o \right) Xk}}{X^k}.
\]
\end{case}

\begin{case}
Let $k$ be odd. In this case  $P_1\ldots P_k\neq \square$.
In a similar manner in Case \ref{even and nonsquare case}, we come across the following 
\[
s_{n}\ll \frac{q^{\frac{n}{2}+\left(\frac{1}{2}+\epsilon-\sigma_o \right) Xk}}{X^k}.
\]
\end{case}

\end{proof}
	\begin{lemma}\label{moment estimation}
	Suppose that the hypothesis in Lemma \ref{moment estimations over irreducibles} holds. Uniformly for all  odd natural numbers  $k\ll \left(\frac{\log X}{(\log \log n)^2}\right)^{\frac{1}{3}}$, we have
	\begin{align*}
	\frac{1}{|\mathcal{H}_n|}\sum_{D\in \mathcal{H}_n}\left(\widetilde{P}_X\left(\frac{1}{2}+\sigma_o, \chi_D\right)- \frac{1}{2}\log X\right)^k\ll \frac{k!\, k^{\frac{3}{2}}(\log X)^{\frac{k}{2}}}{(k/2)!\, 2^{\frac{k}{2}}}\frac{(\log \log n)^{2}}{(\log X)^{\frac{1}{2}}},
	\end{align*}
	while, uniformly for all even natural numbers $k\ll \left(\frac{\log X}{(\log \log n)^2}\right)^{\frac{1}{3}}$,  we obtain
	\begin{align*}
	\frac{1}{|\mathcal{H}_n|}&\sum_{D\in \mathcal{H}_n}\left(\widetilde{P}_X\left(\frac{1}{2}+\sigma_o, \chi_D\right)- \frac{1}{2}\log X \right)^k\\
	&=\frac{k!}{(k/2)! 2^{\frac{k}{2}}}(\log X)^{\frac{k}{2}}
	\left(1+O\left(\frac{k^3}{\log X}\right)+O\left(\frac{k^{\frac{3}{2}}(\log \log n)^{2}}{(\log X)^{\frac{1}{2}}}\right)\right).
	\end{align*}
	\end{lemma}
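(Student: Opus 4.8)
The plan is to reduce the moments of $\widetilde{P}_X(\tfrac12+\sigma_o,\chi_D)-\tfrac12\log X$ to those of the prime part $P_X(\tfrac12+\sigma_o,\chi_D)$, for which Lemma~\ref{moment estimations over irreducibles} already supplies the Gaussian behaviour, and then to control the error produced by this reduction.

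\emph{Step 1: peeling off the prime powers.} Using $\Lambda(P^j)=d(P)$, $d(P^j)=j\,d(P)$ and $\chi_D(P^j)=\chi_D(P)^j$,
\[
\widetilde{P}_X(\tfrac12+\sigma_o,\chi_D)=P_X(\tfrac12+\sigma_o,\chi_D)+\sum_{j\ge 2}\frac1j\sum_{d(P)\le X/j}\frac{\chi_D(P)^j}{|P|^{j(\frac12+\sigma_o)}}.
\]
Since $\sum_{P}|P|^{-j/2}\ll q^{-(j/2-1)}$ for $j\ge 3$, the geometric series in $j$ shows that the terms with $j\ge 3$ contribute $O(1)$ uniformly in $D$. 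For $j=2$ one has $\chi_D(P)^2=1$ when $P\nmid D$ and $=0$ when $P\mid D$, so this term equals $\tfrac12\sum_{d(P)\le X/2}|P|^{-1-2\sigma_o}-\tfrac12\sum_{P\mid D,\,d(P)\le X/2}|P|^{-1-2\sigma_o}$; Lemma~\ref{upper bound for truncated sum over primes } applies (as $(X/2)\sigma_o=c/2<\tfrac1{2\log q}$) and evaluates the first sum as $\log X+O(1)$, while the second is nonnegative and, since a square-free polynomial of degree $n$ has at most $O(n/m)$ irreducible factors of each degree $m$, is bounded by $\tfrac12\sum_{P\mid D}|P|^{-1}\ll\log\log n$ uniformly in $D$. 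Hence
\[
\widetilde{P}_X(\tfrac12+\sigma_o,\chi_D)-\tfrac12\log X=P_X(\tfrac12+\sigma_o,\chi_D)+B(D),\qquad |B(D)|\ll\log\log n,
\]
uniformly over $D\in\mathcal H_n$.

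\emph{Step 2: binomial expansion.} Writing $P_X$ for $P_X(\tfrac12+\sigma_o,\chi_D)$, expand
\[
\frac1{|\mathcal H_n|}\sum_{D\in\mathcal H_n}(P_X+B)^k=\sum_{j=0}^{k}\binom kj\,\frac1{|\mathcal H_n|}\sum_{D\in\mathcal H_n}P_X^{\,j}B^{\,k-j}.
\]
The $j=k$ term is $\frac1{|\mathcal H_n|}\sum_D P_X^k$, which by Lemma~\ref{moment estimations over irreducibles} equals $\frac{k!}{(k/2)!\,2^{k/2}}(\log X)^{k/2}\bigl(1+O(k^3/\log X)\bigr)$ when $k$ is even and is negligibly small when $k$ is odd. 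For $j<k$, the pointwise bound $|B|\ll\log\log n$, the even-moment formula of Lemma~\ref{moment estimations over irreducibles}, Stirling, and the power-mean inequality $\|P_X\|_j\le\|P_X\|_{j+1}$ for odd $j$ give
\[
\Bigl|\frac1{|\mathcal H_n|}\sum_D P_X^{\,j}B^{\,k-j}\Bigr|\ll(\log\log n)^{k-j}\,\frac1{|\mathcal H_n|}\sum_D|P_X|^{\,j}\ll(\log\log n)^{k-j}\Bigl(\frac{j\log X}{e}\Bigr)^{j/2}.
\]
Summing over $j<k$ with $i=k-j$, this is $\ll(k\log X/e)^{k/2}\sum_{i\ge1}\frac1{i!}\bigl(\sqrt k\,\log\log n/\sqrt{\log X}\bigr)^{i}$, and since the hypothesis $k\ll(\log X/(\log\log n)^2)^{1/3}$ forces $k^{3/2}\log\log n/\sqrt{\log X}=o(1)$, the series is $\ll\sqrt k\,\log\log n/\sqrt{\log X}$. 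As $(k\log X/e)^{k/2}\asymp\frac{k!}{(k/2)!\,2^{k/2}}(\log X)^{k/2}$, the cross terms contribute $O\bigl(\frac{k!}{(k/2)!\,2^{k/2}}(\log X)^{k/2}\cdot k^{3/2}(\log\log n)^2/(\log X)^{1/2}\bigr)$ (in fact with room to spare). Adding to the main term settles the even-$k$ claim; for odd $k$ the main term is negligible and the largest surviving piece is the $j=k-1$ term $k\cdot\frac1{|\mathcal H_n|}\sum_D P_X^{k-1}B\ll k\,(\log\log n)\,\frac{(k-1)!}{((k-1)/2)!\,2^{(k-1)/2}}(\log X)^{(k-1)/2}$, comfortably inside the stated bound.

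\emph{Main obstacle.} The conceptual content is modest; the work lies in making Step 2 uniform in $k$ — bounding the $\binom kj$ against the double factorials $\frac{(2m)!}{m!\,2^m}$ via Stirling with constants independent of $k$, checking that all indices $j$ (and $j+1$, in the Hölder step) remain inside the admissible ranges of Lemma~\ref{moment estimations over irreducibles}, and verifying that the constraint $k\ll(\log X/(\log\log n)^2)^{1/3}$ is precisely what forces the $i$-series above to be geometrically dominated by its first term. Step 1, by contrast, is routine.
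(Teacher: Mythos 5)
Your proposal tracks the paper's proof essentially step for step — decompose $\widetilde{P}_X$ into $P_X$ plus a pointwise $O(\log\log n)$ remainder, binomially expand the $k$-th power, apply Lemma~\ref{moment estimations over irreducibles} to the main term, and interpolate the odd moments of $P_X$ from its adjacent even ones — with the only variation that you use the one-sided power-mean inequality $\|P_X\|_j\le\|P_X\|_{j+1}$ where the paper applies Cauchy--Schwarz between orders $r-1$ and $r+1$, a purely cosmetic difference. One small slip in Step~1: the bound $\sum_{P\mid D}|P|^{-1}\ll\log\log n$ does not follow merely from $D$ having $O(n/m)$ irreducible factors of each degree $m$ (that alone only yields $O(n)$); you also need the cap $|\mathcal P_m|\sim q^m/m$ on the number of available primes of each degree, so that the extremal case is $D$ equal to the product of all irreducibles of degree up to about $\log_q n$, whence $\sum_{P\mid D}|P|^{-1}\le\sum_{m\le\log_q n}m^{-1}\ll\log\log n$ — the conclusion, which the paper itself asserts without detail, is nonetheless correct, so this does not affect the validity of the argument.
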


\begin{proof}[Proof of Proposition \ref{main proposition}]
From Lemma \ref{moment estimation}, the moments of $\widetilde{P}_X\left(\frac{1}{2}+\sigma_o, \chi_D\right)-\frac{1}{2}\log X$  exactly match the moments of a Gaussian random variable with mean $0$ and variance $\log n$, and since the Gaussian is determined by its moments, our proposition follows.
\end{proof}
\noindent
\begin{proof}[Proof of Theorem \ref{Unconditional CLT near to half line}] 
We first use Proposition \ref{logL-as-PX} to write $\log L(\tfrac12+\sigma_0, \chi_D)$ in terms of $\widetilde{P}_X(\tfrac12+\sigma_0,\chi_D)$, we take real parts of the expression and then apply Proposition \ref{main proposition} with  $X=\frac{n}{4k}$. From here we have the estimate for the main term when $k \ll \left(\frac{(\log n)}{(\log\log n)^2}\right)^{\frac{1}{3}}$. Finally, we use Lemma  \ref{mean square estimate} to show the error term coming from Proposition \ref{logL-as-PX} is negligible.
\end{proof}

\noindent
It remains to prove Lemma \ref{moment estimation}.
	\begin{proof}[Proof of Lemma \ref{moment estimation}]

	We write
	\[
	\widetilde{P}_X\left(\frac{1}{2}+\sigma_o, \chi_D\right)=\sum_{d(P)\leq X}\frac{\Lambda(P)\chi_D(P)}{d(P)|P|^{\frac{1}{2}+\sigma_o}}+\sum_{d(P)\leq \frac{X}{2}}\frac{\Lambda(P^2)\chi_D(P^2)}{d(P^2)|P|^{1+2\sigma_o}}+O\left(\sum_{d(P)\leq \frac{X}{3}}\frac{1}{|P|^{\frac{3}{2}+3\sigma_o}}\right).
	\]
It is clear that the third sum is $O(1)$. So the contribution of $P^k$, $k\geq 3$ in $\widetilde{P}_{X}\left(\frac{1}{2}+\sigma_o, \chi_D\right)$ is $O(1)$.  Therefore, Lemma \ref{upper bound for truncated sum over primes } yields
\begin{align*}
\widetilde{P}_X\left(\frac{1}{2}+\sigma_0, \chi_D\right)=&\sum_{d(P)\leq X}\frac{\chi_D(P)}{|P|^{\frac{1}{2}+\sigma_o}}+\frac12\sum_{\substack{P\nmid D\\d(P)\leq \frac{X}{2}}}\frac{1}{|P|^{1+2\sigma_0}}+O(1)\\
& = \sum_{d(P)\leq X}\frac{\chi_D(P)}{|P|^{\frac{1}{2}+\sigma_o}}+\frac{1}{2}\log X+O(\log \log n),
\end{align*}
where the error term $O(\log\log n)$ comes from the sum over $P$ such that $P| D$.
Therefore, 
\[
\widetilde{P}_X\left(\frac{1}{2}+\sigma_0, \chi_D\right)-\frac12\log X=P_X\left(\frac{1}{2}+\sigma_o, \chi_D\right)+O(\log\log n).
\]
This implies that for some positive constant $c$,
\begin{align}\label{expanding k th power}
\sum_{D\in \mathcal{H}_n}\left(\widetilde{P}_X\left(\frac{1}{2}+\sigma_0, \chi_D\right)-\frac12\log X\right)^k=\sum_{D\in \mathcal{H}_n}\left(P_X\left(\frac{1}{2}+\sigma_o, \chi_D\right)\right)^k\\
 +O\left(\sum_{D\in \mathcal{H}_n}\sum_{r=1}^{k-1}\binom{k}{r}(c\log \log n)^{k-r}\bigg|P_X\left(\frac{1}{2}+\sigma_o, \chi_D\right)\bigg|^r\right).\nonumber
\end{align}

\vspace{2mm}
\noindent
The first sum will be handled by Lemma \ref{moment estimations over irreducibles}.  To handle reminder term we need to estimate the sum 
\[
\sum_{D\in \mathcal{H}_n}\bigg|P_X\left(\frac{1}{2}+\sigma_o, \chi_D\right)\bigg|^r,  \quad r\leq k-1.
\]
If $r$ is odd then Cauchy-Schwarz inequality and Lemma \ref{moment estimations over irreducibles} with even moments gives us
\begin{align*}
\sum_{D\in \mathcal{H}_n}\bigg|P_X\left(\frac{1}{2}+\sigma_o, \chi_D\right)\bigg|^r & \leq \bigg(\sum_{D\in \mathcal{H}_n}\bigg(P_X\bigg(\frac{1}{2}+\sigma_o, \chi_D\bigg)\bigg)^{r-1}\bigg)^{1/2}\\
&\times \bigg(\sum_{D\in \mathcal{H}_n}\bigg(P_X\bigg(\frac{1}{2}+\sigma_o, \chi_D\bigg)\bigg)^{r+1}\bigg)^{1/2}\\
& \ll |\mathcal{H}_n|\left(\frac{(r-1)!\, (r+1)!}{\left(\frac{r-1}{2}\right)! \left(\frac{r+1}{2}\right)! \, 2^r}\right)^{1/2} (\log X)^r\\
&=  |\mathcal{H}_n|\frac{(r-1)!\, r^{1/2}}{\left(\frac{r-1}{2}\right)! \, 2^{\frac{r-1}{2}}}(\log X)^r.
\end{align*}
On the other hand, if $r$ is even then directly use the asymptotic formula for even moments.
Finally, plugging these estimates with the Lemma \ref{moment estimations over irreducibles} to the sum \eqref{expanding k th power}, we complete the proof.

\end{proof}
\section{The difference between $L$-functions and Proof of Theorem \ref{main CLT theorem}}
%
%
%
Recall that:  the parameter $\delta$ as defined as in \eqref{delta}, the parameter $\sigma_0=\frac{c}{X}$, with $0<c<\frac1{2\log q}$. The following proposition gives the connection between the $L$-function at $\frac{1}{2}$ and very close to $\frac{1}{2}$.
We define 
\[\mathcal{H}_{n,0}=\left\{D\in \mathcal{H}_{n}: \mathcal{L}(q^{-\frac12}e(\theta_{j,D}),\chi_D)=0\Rightarrow \min_j|\theta_{j,D}|>\frac1{y\delta}\right\}.\]
\begin{proposition}\label{propforproofthm1.3}
Let $X\le \frac{n}{4k}$ but goes to infinity with $n$. Suppose that $\delta\, \sigma_o=o(\sqrt{\log \delta})$ as $\delta \to \infty$. Then, we have
\[
\frac1{|\mathcal{H}_n|}\sum_{D\in \mathcal{H}_{n,0}} \Big|\log \Big|L\left(\frac{1}{2}, \chi_D\right)\Big|-\log\Big|L\left(\frac{1}{2}+\sigma_o, \chi_D\right)\Big|\Big|=o\left(\sqrt{\log \delta}\right).
\]
\end{proposition}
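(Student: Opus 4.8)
The plan is to control the difference $\log|L(\tfrac12,\chi_D)|-\log|L(\tfrac12+\sigma_o,\chi_D)|$ by integrating the logarithmic derivative from $\tfrac12$ to $\tfrac12+\sigma_o$ and exploiting the explicit formula \eqref{log deriv version 4} together with the zero-location restriction defining $\mathcal{H}_{n,0}$. Writing
\[
\log\Big|L\left(\tfrac12+\sigma_o,\chi_D\right)\Big|-\log\Big|L\left(\tfrac12,\chi_D\right)\Big|=\int_{0}^{\sigma_o}\Re\,\frac{L'}{L}\left(\tfrac12+t,\chi_D\right)\,dt,
\]
I would substitute \eqref{log deriv version 4} to get, up to the small $\lambda$-term and the harmless $-2\delta\log q$ contribution (which, after integration against $\sigma_o$, is $O(\delta\sigma_o)=o(\sqrt{\log\delta})$ by hypothesis), a sum over the eigenphases of terms of the shape $\Re\big(1-\alpha_j q^{1/2-(1/2+t)}\big)^{-1}=\Re\big(1-\alpha_j q^{-t}\big)^{-1}$. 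Since $\alpha_j=e(-\theta_{j,D})$, one computes $\Re(1-\alpha_j q^{-t})^{-1}=\frac{1-\cos(2\pi\theta_{j,D})q^{-t}}{|1-\alpha_j q^{-t}|^2}\le \frac{2}{|1-\alpha_j q^{-t}|^2}\ll \frac{1}{t^2+\theta_{j,D}^2}$ uniformly for $0<t\le\sigma_o$, using $q^{-t}=1+O(t)$ and elementary trigonometric bounds. Thus the integral over $t$ of each term is $\ll \int_0^{\sigma_o}\frac{dt}{t^2+\theta_{j,D}^2}\ll \frac{1}{|\theta_{j,D}|}$, and summing over $j$,
\[
\Big|\log\Big|L\left(\tfrac12,\chi_D\right)\Big|-\log\Big|L\left(\tfrac12+\sigma_o,\chi_D\right)\Big|\Big|\ll \delta\sigma_o+\sum_{j=1}^{2\delta}\min\left(\sigma_o,\frac{1}{|\theta_{j,D}|}\right),
\]
where the $\min$ comes from also using the trivial bound $\Re(1-\alpha_jq^{-t})^{-1}\ll 1/\sigma_o$ valid when $|\theta_{j,D}|$ is not small (as in Lemma \ref{TsimAltu}-type estimates).

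Next I would exploit the definition of $\mathcal{H}_{n,0}$: for $D\in\mathcal{H}_{n,0}$ every eigenphase satisfies $|\theta_{j,D}|>\frac{1}{y\delta}$, so each term $\min(\sigma_o,1/|\theta_{j,D}|)\le y\delta$ is not by itself enough — instead I want to bound $\sum_j \min(\sigma_o,1/|\theta_{j,D}|)$ on average over $D\in\mathcal{H}_{n,0}$. The cleanest route is to split the eigenphases by dyadic ranges $|\theta_{j,D}|\in (2^{-m-1},2^{-m}]$ and count, on average over the family, how many zeros lie that close to the central point; this is exactly the kind of one-level-density input that Rudnick's theorem (cited in the excerpt) and the restriction in $\mathcal{H}_{n,0}$ supply. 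For $|\theta_{j,D}|\ge 1/(\log\delta)$, say, the contribution is $\ll \delta\sigma_o=o(\sqrt{\log\delta})$; for $\frac{1}{y\delta}<|\theta_{j,D}|<\frac{1}{\log\delta}$, one uses that the expected number of such low-lying zeros, after scaling by $\delta$, is governed by $\int W(\mathrm{Sp})$ over a bounded window and is $O(\log(y\delta/\log\delta))=o(\sqrt{\log\delta})$-sized when weighted by $1/|\theta_{j,D}|$ against the density $1-\frac{\sin 2\pi x}{2\pi x}\asymp x^2$ near the origin, which crucially cancels the $1/|\theta|$ singularity and leaves a convergent, bounded integral. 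Summing the dyadic pieces and multiplying by $|\mathcal{H}_n|$ then yields the claimed $o(\sqrt{\log\delta})$ after dividing back.

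\textbf{Main obstacle.} The delicate point is the second paragraph: turning the pointwise bound $\sum_j\min(\sigma_o,1/|\theta_{j,D}|)$ into an average bound over $D\in\mathcal{H}_{n,0}$ that is genuinely $o(\sqrt{\log\delta})$ rather than merely $O(\sqrt{\log\delta})$. One must be careful that restricting to $\mathcal{H}_{n,0}$ removes precisely the $D$ with an anomalously small eigenphase (so the $1/|\theta_{j,D}|$ never exceeds $y\delta$), while the remaining very-low-lying zeros in the window $(\frac{1}{y\delta},\frac{1}{\log\delta})$ are rare enough — by the symplectic one-level density, whose density vanishes quadratically at the origin — that their weighted contribution stays below $\sqrt{\log\delta}$; quantifying this requires an unconditional count of low-lying zeros with a power-of-$y$ uniformity, which one gets from Rudnick's result (or directly from the approximate explicit formula plus Lemmas \ref{square term evaluation} and \ref{polya-vinogradov inequality}) rather than from the full Density Conjecture. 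A secondary technical nuisance is justifying that the imaginary/argument contributions and the $\lambda$-term in \eqref{log deriv version 4} are negligible after integration, but this is routine given the bounds already assembled in Section \ref{LprimeLisreal}.
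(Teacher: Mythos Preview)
Your pointwise bound has an error that propagates into the whole strategy. Integrating the zero term from \eqref{log deriv version 4} gives, after the exact computation the paper carries out (equation \eqref{key-for-thm1.3}),
\[
\frac12\sum_{j=1}^{2\delta}\log\!\left(1+\frac{q^{\sigma_o}+q^{-\sigma_o}-2}{2-2\cos(2\pi\theta_{j,D})}\right)\asymp \frac12\sum_{j}\log\!\left(1+\frac{\sigma_o^2}{\theta_{j,D}^{2}}\right),
\]
not $\sum_j\min(\sigma_o,1/|\theta_{j,D}|)$. Your ``trivial bound $\Re(1-\alpha_jq^{-t})^{-1}\ll 1/\sigma_o$'' is false when both $t$ and $\theta_{j,D}$ are small (the quantity behaves like $(t^2+\theta_{j,D}^2)^{-1}$), and even your own bound $\int_0^{\sigma_o}(t^2+\theta_{j,D}^2)^{-1}\,dt$ is $\asymp\min(\sigma_o/\theta_{j,D}^2,\,1/|\theta_{j,D}|)$, not $\min(\sigma_o,1/|\theta_{j,D}|)$. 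With the correct per-zero contribution, a naive sum over $j$ is of order $\delta$, and your proposed remedy --- feeding $\min(\sigma_o,1/|\theta_{j,D}|)$ (or the corrected version) into a Rudnick-type one-level density --- does not apply: Rudnick's theorem is for smooth even test functions with compactly supported Fourier transform, whereas the weight here is singular at the origin, so the averaging step in your second paragraph is not justified.

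The paper's argument avoids zero-counting altogether. Using $\log(1+x)\le x$ and the restriction $|\theta_{j,D}|>1/(y\delta)$ defining $\mathcal{H}_{n,0}$, each logarithmic term is bounded by $\ll y\sigma_o\,\Re\big((1-\alpha_jq^{-\sigma_o})^{-1}-\tfrac12\big)$; summing over $j$ then \emph{reconstitutes} $\frac{L'}{L}(\tfrac12+\sigma_o,\chi_D)+O(\delta)$ via the explicit formula. This is the key move you are missing: the sum over zeros is traded for a single value of $L'/L$. Now \eqref{LprimeL-identity} replaces $L'/L(\tfrac12+\sigma_o,\chi_D)$ by a short Dirichlet polynomial in $\Lambda_X$ plus $O(\delta)$, and the average over $D$ of that polynomial is handled by Lemma~\ref{mean square estimate}. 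With $y$ chosen so that $y\delta\sigma_o=o(\sqrt{\log\delta})$, all pieces are $o(\sqrt{\log\delta})$. No one-level density input is used.
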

\begin{remark}
Notice that there is a huge difference between the quantities $\log \Big|L\left(\frac{1}{2}, \chi_D\right)\Big|-\log\Big|L\left(\frac{1}{2}+\sigma_o, \chi_D\right)\Big|$ and $ \log \Big|L\left(\frac{1}{2}+\sigma_o, \chi_D\right)\Big|-\log\Big|L\left(\frac{1}{2}, \chi_D\right)\Big|$. The first quantity is very small whereas the second one is quite large and this is the reason for summing over $\mathcal{H}_{n,0}$ in the above Proposition. 
\end{remark}
\begin{proof}

Recall that 
\begin{align}\label{log derivative}
\frac{L'}{L}(s, \chi_D)=\log q\bigg( \frac{\lambda q^{-s}}{1-q^{-s}}-\delta+ \sum_{j=1}^{2\delta}\Re\bigg(\frac{1}{1-\alpha_jq^{\frac{1}{2}-s}}-\frac{1}{2}\bigg)\bigg).
\end{align}
Since $D\in\mathcal{H}_{n,0}$, we have $L(\frac{1}{2}, \chi_D)\neq 0$.
Then using the fact that \eqref{log derivative} is real for $s\in \mathbb{R}$, we obtain
\begin{align}\label{key-for-thm1.3}
\nonumber &\log \left|L\left(\frac{1}{2}+\sigma_o, \chi_D\right)\right|- \log \left|L\left(\frac{1}{2}, \chi_D\right)\right|=\int_{0}^{\sigma_o}\frac{L'}{L}\left(\frac{1}{2}+\sigma, \chi_D\right)d\sigma\\
\nonumber&= \int_{0}^{\sigma_o}\log q\bigg( \frac{\lambda q^{-\tfrac12-\sigma}}{1-q^{-\tfrac12-\sigma}}-\delta+\sum_{j=1}^{2\delta}\Re\bigg(\frac{1}{1-\alpha_jq^{-\sigma}}-\frac{1}{2}\bigg)\bigg)d\sigma\\
\nonumber& =\delta \sigma_o + \frac{\log q}{2}\sum_{j=1}^{2\delta}\Re \int_{0}^{\sigma_o}\frac{1+\alpha_j q^{-\sigma}}{1-\alpha_j q^{-\sigma}}d\sigma +\lambda\log q \log\left(\frac{1-q^{-\tfrac12-\sigma_0}}{1-q^{-\tfrac12}}\right)\\
&=\delta\sigma_o \log q -\frac{1}{2}\sum_{j=1}^{2\delta
}\log\left( \frac{q^{\sigma_o}+q^{-\sigma_o}-2\cos (2\pi \theta_{j, D})}{2-2\cos (2\pi\theta_{j, D})}\right)+\lambda\log q \log\left(\frac{1-q^{-\tfrac12-\sigma_0}}{1-q^{-\tfrac12}}\right).
\end{align}
 We observe that 
 \begin{align}\label{observation 1}
 \Re \bigg(\frac{1}{1-\alpha_jq^{-\sigma_o}}-\frac{1}{2}\bigg)=\frac{1-q^{-2\sigma_o}}{2\left(1- q^{-\sigma_o} \cos(2\pi \theta_{j, D})+ q^{-2\sigma_o}\right)}.
 \end{align}
 
 \vspace{2mm}
 \noindent
Now we use the inequality $\log(1+x)<x$, combined with the fact that $D\in \mathcal{H}_{n,0}$ and \eqref{observation 1} to get
\begin{align*}
\bigg|\log \frac{q^{\sigma_o}+q^{-\sigma_o}-2\cos (2\pi \theta_{j, D})}{2-2\cos (2\pi\theta_{j, D})}\bigg|& \leq \frac{q^{\sigma_o}+q^{-\sigma_o}-2}{2-2\cos (2\pi\theta_{j, D})} \ll \frac{\sigma_o^2}{2-2\cos (2\pi\theta_{j, D})}\\
&\ll \frac{y\sigma_o (1-q^{-2\sigma_o})}{2-2\cos (2\pi\theta_{j, D})}\ll \frac{y\sigma_o (1-q^{-2\sigma_o})}{1- q^{-\sigma_o} \cos(2\pi \theta_{j, D})+ q^{-2\sigma_o}}\\
&\ll y\sigma_o \, \Re \bigg(\frac{1}{1-\alpha_jq^{-\sigma_o}}-\frac{1}{2}\bigg).
\end{align*}

\vspace{2mm}
\noindent
On the real line, from \eqref{log derivative}, we see that 
\[
\frac{L'}{L}\left(\frac{1}{2}+ \sigma_o, \chi_D \right)=\log q\bigg(\frac{\lambda q^{-\tfrac12-\sigma_0}}{1-q^{-\tfrac12-\sigma_0}} -\delta+\sum_{j=1}^{2\delta}\Re \bigg(\frac{1}{1-\alpha_j q^{-\sigma_o}}-\frac{1}{2}\bigg)\bigg).
\]
From this expression of the logarithmic derivative of $L(s, \chi_D)$ on the real line, we have
\begin{align}\label{bound of log along zeros}
\bigg|\log \frac{q^{\sigma_o}+q^{-\sigma_o}-2\cos (2\pi \theta_{j, D})}{2-2\cos (2\pi\theta_{j, D})}\bigg|\ll y\sigma_o \, \frac{L'}{L}\left(\frac{1}{2}+\sigma_o, \chi_D\right) + y\delta \sigma_o.
\end{align}

\vspace{2mm}
\noindent
Using \eqref{LprimeL-identity}, we deduce that  
\begin{align}\label{bound for log derivative}
\frac{L'}L\left(\frac12+\sigma_0,\chi_D\right)=O\left(\frac1{X^2}\sum_{f \in \mathcal{M}_{\leq 3X}}\frac{\Lambda_X(f)\chi_D(f)}{|f|^{\frac12+\sigma_0}}\right)+O(\delta).
\end{align} 
After inserting the bound \eqref{bound for log derivative} into \eqref{bound of log along zeros}, we obtain 
 \[
 \log \left|L\left(\frac{1}{2}, \chi_D\right)\right|- \log\left| L\left(\frac{1}{2}+\sigma_o, \chi_D\right)\right|\ll \delta\sigma_o+ y\delta\sigma_o + \frac{y \sigma_o}{X^2} 
\sum_{f \in \mathcal{M}_{\leq 3X}}\frac{\Lambda_X(f)\chi_D(f)}{|f|^{\frac12+\sigma_0}}.
 \]
 Now we choose $y=y(\delta)$ such a way that 
 \[
 y\delta\sigma_o= o(\sqrt{\log_q \delta}) \quad \text{ and } \quad \delta\sigma_o= o(\sqrt{\log_q \delta}).
 \]
 Therefore, it is enough to prove
 \[
 \frac{1}{|\mathcal{H}_n|}\sum_{D\in \mathcal{H}_{n,0}}\bigg|
\frac{y \sigma_o}{X^2}\sum_{f \in \mathcal{M}_{\leq 3X}}\frac{\Lambda_X(f)\chi_D(f)}{|f|^{\frac12+\sigma_0}}\bigg|^2= O(y^2)=o(\log \delta),
 \]
We see that 
\begin{align*}
\sum_{d(f)\leq X}\frac{\Lambda_X(f)\chi_D(f)}{|f|^{1/2+\sigma_0}}\ll \sum_{d(f)\leq X}d(f)q^{(-1/2-\sigma_0)d(f)}\ll \sum_{n\leq X}nq^{\frac{n}{2}-n\sigma_0}\ll X^2q^{\frac{X}{2}}
\end{align*}
Since  $|\mathcal{H}_n\setminus \mathcal{H}_{n,0}|=o(1)$, Lemma \ref{mean square estimate} therefore yields
\begin{align*}
 \frac{1}{|\mathcal{H}_n|}\sum_{D\in \mathcal{H}_{n,0}}\bigg|
\frac{y \sigma_o}{X^2}\sum_{f \in \mathcal{M}_{\leq 3X}}\frac{\Lambda_X(f)\chi_D(f)}{|f|^{\frac12+\sigma_0}}\bigg|^2= o(y^2)+O\left(\frac{y^2q^{X-n}}{X^2}\right).
\end{align*}
The above choice of $X$ gives us $o(y^2)$.
 
\end{proof}
\begin{proof}[Proof of Theorem \ref{main CLT theorem}]
We will deduce this result by making use of Hypothesis \ref{low lying zero hypothesis} with the choices $\sigma_0=\frac{c}{X}$, $y=y(\delta)$ such that 
 \[
 y\delta\sigma_0= o(\sqrt{\log_q \delta}) \quad \text{ and } \quad \delta\sigma_o= o(\sqrt{\log_q \delta}).
 \]
Hypothesis \ref{low lying zero hypothesis} states that there exists a subset $\mathcal{H}_{n,0}$ of $\mathcal{H}_n$ with measure $1-o(1)$  as $n\to\infty$, such that if $D\in \mathcal{H}_{n,0}$  then any zero of $L(s,\chi_D)$ satisfies the property that \[\min_{j}|\theta_{j, D}|> \frac{1}{y\delta}.\] 
Restricting to $\mathcal{H}_{n,0}$, by Prpoposition \ref{propforproofthm1.3},
we conclude
\[
\frac1{|\mathcal{H}_n|}\sum_{D\in \mathcal{H}_{n,0}} \Big|\log \left|L\left(\frac{1}{2}, \chi_D\right)\right|-\log \left|L\left(\frac{1}{2}+\sigma_o, \chi_D\right)\right|\Big|=o\left(\sqrt{\log \delta}\right),
\]
thus this does not alter the distribution and we have from Theorem \ref{Unconditional CLT near to half line} that the distribution of $L(\frac12,\chi_D)$ is also approximately normal with mean $\frac12\log n$ and variance $\log n$ as desired. 
 
%
\end{proof}

\section{Proof of Corollary \ref{Unconditional CLT}}
\noindent
We use \eqref{log derivative} to obtain
\begin{align*}
&\log \Big|L\left(\frac{1}{2}, \chi_D\right)\Big|- \log \Big|L\left(\frac{1}{2}+\sigma_o, \chi_D\right)\Big|=\Re \int_{0}^{\sigma_o}- \frac{L'}{L}\left(\frac{1}{2}+\sigma, \chi_D\right)d\sigma\\
&=\Re \int_{0}^{\sigma_o}\log q\bigg(\frac{\lambda q^{-\tfrac12-\sigma}}{1-q^{-\tfrac12-\sigma}} -\delta+\sum_{j=1}^{2\delta}\bigg(\frac{1}{1-\alpha_jq^{-\sigma}}-\frac{1}{2}\bigg)\bigg)d\sigma\\
& =\delta \sigma_o \log q - \frac{\log q}{2}\sum_{j=1}^{2\delta}\Re \int_{0}^{\sigma_o}\frac{1+\alpha_j q^{-\sigma}}{1-\alpha_j q^{-\sigma}}d\sigma +\lambda\log q \log\left(\frac{1-q^{-\tfrac12-\sigma_0}}{1-q^{-\tfrac12}}\right)\\
&=\delta\sigma_o \log q -\frac{1}{2}\sum_{j=1}^{2\delta
}\log \frac{q^{\sigma_o}+q^{-\sigma_o}-2\cos (2\pi \theta_{j, D})}{2-2\cos (2\pi\theta_{j, D})}+\lambda\log q \log\left(\frac{1-q^{-\tfrac12-\sigma_0}}{1-q^{-\tfrac12}}\right)\\
&=\delta\sigma_o \log q -\frac{1}{2}\sum_{j=1}^{2\delta
}\log \bigg(1+\frac{q^{\sigma_o}+q^{-\sigma_o}-2}{2-2\cos (2\pi\theta_{j, D})}\bigg)+\lambda\log q \log\left(\frac{1-q^{-\tfrac12-\sigma_0}}{1-q^{-\tfrac12}}\right).
\end{align*}

Note that the summands in the second term are all non-negative which leads the second term of the above expression to be a negative value. Therefore, we deduce that
\[
\log \Big|L\left(\frac{1}{2}, \chi_D\right)\Big|\leq \log \Big|L\left(\frac{1}{2}+\sigma_o, \chi_D\right)\Big|+ \delta \sigma_o \log q+\lambda\log q \log\left(\frac{1-q^{-\tfrac12-\sigma_0}}{1-q^{-\tfrac12}}\right).
\]

\vspace{2mm}
\noindent
From the Hypothesis that $\frac{\delta \sigma_o}{\sqrt{\log \delta}}=o(1)$, we obtain
\[
\frac{\log \Big|L\left(\frac{1}{2}, \chi_D\right)\Big|}{\sqrt{\log \delta}}\leq \frac{\log \Big|L\left(\frac{1}{2}+\sigma_o, \chi_D\right)\Big|}{\sqrt{\log \delta}}+ o(1).
\]
Hence, the theorem follows from the Theorem \ref{Unconditional CLT near to half line}.


\end{document}